\newcommand{\sm}[1]{[ \begin{smallmatrix} #1 \end{smallmatrix} ]}
\newcommand{\pmat}[1]{\begin{pmatrix} #1 \end{pmatrix}}
\newcommand{\subgrp}[1]{\langle #1 \rangle}
\newcommand{\Set}[1]{\left\{ #1 \right\}}
\newcommand{\set}[1]{\left\{ #1 \right\}}
\newcommand{\smallset}[1]{\{ #1 \}}
\newcommand{\abs}[1]{| #1 |}
\newcommand{\ol}[1]{\overline{#1}}
\newcommand{\floor}[1]{\lfloor #1 \rfloor}
\newcommand{\da}[1]{\!\!\downarrow_{#1}}
\DeclareMathOperator{\ad}{ad}
\DeclareMathOperator{\Ann}{Ann}
\DeclareMathOperator{\End}{End}
\DeclareMathOperator{\Grp}{Grp}
\DeclareMathOperator{\Hom}{Hom}
\DeclareMathOperator{\id}{id}
\DeclareMathOperator{\Ind}{Ind}
\DeclareMathOperator{\Lie}{Lie}
\DeclareMathOperator{\otr}{otr}
\DeclareMathOperator{\res}{res}
\DeclareMathOperator{\rad}{rad}
\DeclareMathOperator{\Res}{Res}
\DeclareMathOperator{\sgn}{sgn}
\DeclareMathOperator{\Span}{span}
\DeclareMathOperator{\tr}{tr}
\newcommand{\zero}{\ol{0}}
\newcommand{\one}{\ol{1}}
\newcommand{\C}{\mathbb{C}}
\newcommand{\N}{\mathbb{N}}
\newcommand{\T}{\mathbb{T}}
\newcommand{\Z}{\mathbb{Z}}
\newcommand{\CA}{\C A}
\newcommand{\CD}{\C D}
\newcommand{\CG}{\C G}
\newcommand{\CH}{\C H}
\newcommand{\CR}{\C R}
\newcommand{\CS}{\C S}
\newcommand{\calA}{\mathcal{A}}
\newcommand{\calP}{\mathcal{P}}
\newcommand{\calW}{\mathcal{W}}
\newcommand{\Pbar}{\ol{\calP}}
\newcommand{\Wbar}{\ol{\calW}}
\newcommand{\fa}{\mathfrak{a}}
\newcommand{\fD}{\mathfrak{D}}
\newcommand{\ff}{\mathfrak{f}}
\newcommand{\g}{\mathfrak{g}}
\newcommand{\gzero}{\g_{\zero}}
\newcommand{\gone}{\g_{\one}}
\newcommand{\gl}{\mathfrak{gl}}
\newcommand{\fh}{\mathfrak{h}}
\newcommand{\fs}{\mathfrak{s}}
\newcommand{\fsl}{\mathfrak{sl}}
\newcommand{\fq}{\mathfrak{q}}
\newcommand{\sq}{\mathfrak{sq}}
\newcommand{\glmn}{\gl(m|n)}
\newcommand{\Wmus}{W_{\mu_s}}
\newcommand{\Wmuszero}{W_{\mu_s,\zero}}
\newcommand{\Wmusone}{W_{\mu_s,\one}}
\newcommand{\Vone}{V_{\ol{1}}}
\newcommand{\Vzero}{V_{\ol{0}}}
\numberwithin{equation}{subsection}
\newtheorem{theorem}{Theorem}[subsection]
\newtheorem{lemma}{Lemma}[subsection]
\let\c@lemma\c@theorem\makeatother
\let\c@alg\c@theorem\makeatother
\newtheorem{proposition}{Proposition}[subsection]
\let\c@proposition\c@theorem\makeatother
\let\c@conj\c@theorem\makeatother
\newtheorem{corollary}{Corollary}[subsection]
\let\c@corollary\c@theorem\makeatother
\theoremstyle{definition}
\newtheorem{definition}{Definition}[subsection]
\let\c@definition\c@theorem\makeatother
\newtheorem{convention}{Convention}[subsection]
\let\c@convention\c@theorem\makeatother
\newtheorem{example}{Example}[subsection]
\let\c@example\c@theorem\makeatother
\newtheorem{remark}{Remark}[subsection]
\let\c@remark\c@theorem\makeatother
\let\c@fact\c@theorem\makeatother
\let\c@note\c@theorem\makeatother
\numberwithin{equation}{subsection}
\crefname{theorem}{Theorem}{Theorems}
\crefname{fact}{Fact}{Facts}
\crefname{note}{Note}{Notes}
\crefname{lemma}{Lemma}{Lemmas}
\crefname{alg}{Algorithm}{Algorithms}
\crefname{remark}{Remark}{Remarks}
\crefname{example}{Example}{Examples}
\crefname{prop}{Proposition}{Propositions}
\crefname{conj}{Conjecture}{Conjectures}
\crefname{convention}{Convention}{Convention}
\crefname{cor}{Corollary}{Corollaries}
\crefname{definition}{Definition}{Definitions}
\crefname{equation}{\!\!}{\!\!} 
\title{The Lie superalgebra of transpositions}
\author{Christopher M.\ Drupieski}
\address{Department of Mathematical Sciences,
		DePaul University,
		Chicago, IL 60614, USA}
\email{c.drupieski@depaul.edu}
\author{Jonathan R.\  Kujawa}
\address{Department of Mathematics \\
		Oregon State University \\
		Corvallis, OR 97331, USA}
\email{jonathan.kujawa@oregonstate.edu}
\thanks{Corresponding Author:  Jonathan R.\ Kujawa (kujawaj@oregonstate.edu)}
\date{\today}
\subjclass{Primary 17B10. Secondary 20B30.}
\begin{document}

\begin{abstract}
We consider the group algebra of the symmetric group as a superalgebra, and describe its Lie subsuperalgebra generated by the transpositions. 
\end{abstract}


\maketitle

\section{Introduction}

\subsection{Questions of WunderNatur}

This paper answers a series of questions originally posed by the MathOverflow user WunderNatur in August 2022 \cite{WunderNatur:2022}: Considering the group algebra of the symmetric group $\CS_n$ as a superalgebra (by considering the even permutations in $S_n$ to be of even superdegree and the odd permutations in $S_n$ to be of odd superdegree), and considering $\CS_n$ as a Lie superalgebra via the super commutator,
	\[
	[x,y] = xy - (-1)^{\ol{x}\cdot \ol{y}} yx,
	\]
what is the structure of $\CS_n$ as a Lie superalgebra, and what is the structure of the Lie subsuperalgebra of $\CS_n$ generated by the transpositions? The non-super analogues of these questions were previously answered by Marin \cite{Marin:2007}; we describe Marin's motivation in Section~\ref{subsec:Marin}.

\subsection{Main results and methods}

Fix an integer $n \geq 2$, and let $\calP(n)$ denote the set of all integer partitions of $n$. Given $\lambda \in \calP(n)$, write $\lambda'$ for the partition that is conjugate (or transpose) to $\lambda$, let $S^\lambda$ be the simple Specht module for $\CS_n$ labeled by $\lambda$, and set $f^\lambda = \dim_\C(S^\lambda)$.

Let $\Pbar(n)$ be any fixed set of representatives in $\calP(n)$ for the equivalence relation generated by $\lambda \sim \lambda'$. Up to parity change, the simple $\CS_{n}$-super\-modules are labeled by the elements of $\Pbar(n)$, and are described as ungraded $\CS_n$-modules by
	\[
	W^{\lambda} = \begin{cases}
	S^{\lambda} & \text{if $\lambda = \lambda'$ (Type M)}, \\
	S^{\lambda} \oplus S^{\lambda'} & \text{if $\lambda \neq \lambda'$ (Type Q)}.
	\end{cases}
	\]
In the terminology of \cite{Brundan:2002}, the supermodule $W^\lambda$ is absolutely irreducible (i.e., irreducible as an ordinary $\CS_n$-module) if $\lambda = \lambda'$, and is self-associate (hence is naturally equipped with an odd involution $J^\lambda: W^\lambda \to W^\lambda$) if $\lambda \neq \lambda'$. Since $\C S_{n}$ is semisimple as a superalgebra, the graded version of the Artin--Wedderburn theorem provides a corresponding direct sum decomposition of $\CS_n$ into matrix superalgebras of types M and Q:
	\[
	\C S_{n} \cong \big[ \bigoplus_{\substack{\lambda \in \Pbar(n) \\ \lambda  = \lambda'}} \End_\C\left(W^{\lambda} \right) \big] \oplus \big[ \bigoplus_{\substack{\lambda \in \Pbar(n) \\ \lambda \neq  \lambda'}} Q\left( W^{\lambda} \right) \big];
	\]
see \cref{cor:CSn-as-superalgebra}. This of course also describes $\C S_{n}$ as a Lie superalgebra:
	\[
	\C S_{n} \cong \big[ \bigoplus_{\substack{\lambda \in \Pbar(n) \\ \lambda  = \lambda'}} \gl\left( W^{\lambda} \right) \big] \oplus \big[ \bigoplus_{\substack{\lambda \in \Pbar(n) \\ \lambda \neq  \lambda'}} \fq\left( W^{\lambda} \right) \big].
	\]

Write $\g_{n}$ for the Lie subsuperalgebra of $\CS_{n}$ generated by the transpositions, let $T_{n}$ be the sum in $\CS_n$ of all transpositions, and let $\fD (\C S_{n} )$ be the derived subsuperalgebra of the Lie super\-algebra $\CS_{n}$.  Then \cref{theorem:finaltheorem} states that
	\[
	\g_n = \fD(\CS_n) + \C T_n,
	\]
where
	\[
	\fD(\CS_n) \cong \big[ \bigoplus_{\substack{\lambda \in \Pbar(n) \\ \lambda  = \lambda'}} \fsl \left( W^{\lambda} \right) \big] \oplus \big[ \bigoplus_{\substack{\lambda \in \Pbar(n) \\ \lambda \neq  \lambda'}} \sq \left( W^{\lambda} \right) \big].
	\]
Here $\fsl(W^\lambda)$ denotes the special linear Lie superalgebra on the superspace $W^\lambda$, and $\sq(W^\lambda)$ denotes the subspace of $\fq(W^\lambda)$ of elements whose `odd trace' is zero; see \eqref{eq:sq(n)-supermatrices}.

The description of $\CS_{n}$ as a Lie superalgebra is straightforward from the classification of the simple $\CS_{n}$-supermodules. Sections \ref{S:Prelims} and \ref{S:SnSupergroup} gather together various results on super representation theory and then apply them to $\C S_{n}$ to produce this classification, as well as to show that each $W^\lambda$ admits a restriction to $\CS_{n-1}$ that is multiplicity free if one accounts for parity shifts; see \cref{cor:Wlambda-restriction} and \cref{rem:multiplicity-free} for details. Many of the results in Sections \ref{S:Prelims} and  \ref{S:SnSupergroup} can be found in the literature and are certainly not surprising to experts.

It takes considerably more effort to confirm the claimed description of $\g_{n}$.  The argument is by a ``grand loop'' induction on $n$, wherein the results of Sections \ref{S:Imageofgn}--\ref{subsec:gn-structure} are proved sequentially for the value $n$ under the assumption that the results in these sections have already been proved for the value $n-1$. The arguments require intricate calculations and considerable case-by-case analysis. An important role is played by the Gelfand--Zeitlin bases for the $S^{\lambda}$ given by the simultaneous eigenvectors for the action of the Jucys--Murphy elements.

\subsection{The results of Marin} \label{subsec:Marin}

The questions answered here were first considered by Marin \cite{Marin:2003,Marin:2007} in the classical (non-super) setting. As an ungraded algebra, $\CS_{n}$ is again a direct sum of matrix algebras thanks to the classical Artin--Wedderburn theorem. Marin showed that the Lie subalgebra of $\CS_n$ generated by the transpositions is reductive with semisimple part isomorphic to a direct sum of special linear, orthogonal, and symplectic Lie algebras. In particular, he showed that the transpositions generate a Lie algebra that is roughly half the dimension of the Lie super\-algebra $\g_{n}$. Thus, the graded and classical settings are quite different.

One of Marin's motivations was the representation theory of the braid group, $B_{d}$. For example, representations of the Type A Iwahori--Hecke algebra, $H_d(q)$, can be inflated to $B_d$ via a canonical surjective algebra homomorphism $\C B_d \to H_d(q)$. The algebra $H_d(q)$ does not have a natural coproduct and the tensor product of two $H_d(q)$-modules,  $V \otimes W$, is not in general again a module over $H_d(q)$.  However, it is a module for the braid group via the coproduct on $\C B_d$. Marin showed that the decomposition of $V \otimes W$ into simple $\C B_d$-modules could be determined from the Lie algebra of transpositions. Marin also calculated the algebraic envelope of the braid group in the simple representations that arise via inflation through the map $\C B_d \to H_d(q)$.

The representation theory of the braid group is a rich area of study with connections to topology, combinatorics, algebraic geometry, and categorification. The braid group admits evident $\Z$- and $\Z_{2}$-gradings (defined by declaring the generators to be of degree $1$ or $\one$, respectively), but as far as we are aware the graded representation theory of the braid group is rather neglected. While this paper focuses on the questions raised by WunderNatur, it does suggest that the graded representation theory of the braid group should be notably different from the classical setting and worth further study. For example, if one considers the algebra $\calA = \C [q,q^{-1}]$ as a superalgebra where $q$ is declared to be of odd superdegree (i.e., if we consider $\calA$ as a superalgebra via reduction modulo two of the $\Z$-grading which makes $\calA$ a graded field), then the Iwahori--Hecke algebra defined over $\mathcal{A}$, $H_{d}(q)_{\mathcal{A}}$, is a superalgebra when the generators are taken to be of odd superdegree. There is a surjective superalgebra homomorphism from $\mathcal{A}B_{d}$ to $H_{d}(q)_{\mathcal{A}}$ and it would be interesting to study the supermodules for the braid group afforded by this map.  

\subsection{Acknowledgements} \label{subsec:Acks}  The authors would like to thank the referee for their close reading and helpful comments.

\section{Preliminaries}\label{S:Prelims}

\subsection{Conventions}

Set $\Z_2 = \Z/2\Z = \{ \zero,\one \}$. Following the literature, we use the prefix `super' to indicate that an object is $\Z_2$-graded. We denote the decomposition of a vector superspace into its $\Z_2$-homogeneous components by $V = \Vzero \oplus \Vone$, calling $\Vzero$ and $\Vone$ the even and odd subspaces of $V$, respectively, and writing $\ol{v} \in \Z_2$ to denote the superdegree of a homogeneous element $v \in \Vzero \cup \Vone$. If we state a formula in which homogeneous degrees of elements are specified, we mean that the formula is true as written for homogeneous elements, and that it extends by linearity to non-homogeneous elements. When written without additional adornment, we consider the field $\C$ to be a superspace concentrated in even super\-degree. All superspaces are assumed to be vector spaces over the field $\C$, all linear maps are $\C$-linear, and except when indicated by a modifier (e.g., `Lie'), all superalgebras are assumed to be associative and unital. Given a superspace $V$, let $\dim(V) = \dim_{\C}(V)$ be the ordinary dimension of $V$ as a $\C$-vector space.

A linear map between superspaces is \emph{even} if it preserves homogeneous degrees, and is \emph{odd} if it reverses homogeneous degrees. Given superspaces $V$ and $W$, let $\Hom(V,W) = \Hom_{\C}(V,W)$ be the superspace of all $\C$-linear maps $\phi: V \to W$, and let $\End(V) = \Hom_{\C}(V,V)$. Let $V^* = \Hom(V,\C)$ be the usual linear dual of $V$. In general, isomorphisms between superspaces will be denoted by `$\cong$' and, except when stated otherwise, should be understood as arising via even linear maps.  We write `$\simeq$' rather than `$\cong$' to emphasize when an isomorphism arises via an odd linear map.

\begin{remark}
Our convention for the use of the symbols `$\simeq$' and `$\cong$' is different then in \cite{Kleshchev:2005}. In the spirit of Robert Recorde, our choice of notation is motivated by our point of view that objects that are even-isomorphic are ``more equal'' than objects that are isomorphic by an odd or inhomogenous isomorphism.
\end{remark}

Given a superspace $V$, let $\Pi(V) = \set{ v^\pi : v \in V}$ be its parity shift. As a superspace, $\Pi(V)_{\zero} = \Vone$ and $\Pi(V)_{\one} = \Vzero$, with $\ol{v^\pi} = \ol{v} + \one$. Then $(-)^\pi : v \mapsto (-1)^{\ol{v}} v^\pi$ defines an odd isomorphism $V \simeq \Pi(V)$.

Given a superalgebra $A$ and (left) $A$-super\-modules $M$ and $N$, we say that a linear map $f: M \to N$ is an $A$-supermodule homomorphism if $f(a.m) = (-1)^{\ol{a} \cdot \ol{f}} a.f(m)$ for all $a \in A$ and $m \in M$, and we write $\Hom_A(M,N)$ for the set of all $A$-supermodule homomorphisms from $M$ to $N$. The parity shift $\Pi(M)$ of an $A$-supermodule is again an $A$-supermodule, with action defined by $a.m^\pi = (a.m)^\pi$. Then the function $(-)^\pi : m \mapsto (-1)^{\ol{m}} m^\pi$ is an odd $A$-supermodule isomorphism $M \simeq \Pi(M)$.

Let $\N = \set{0,1,2,3,\ldots}$ be the set of non-negative integers.

\subsection{Semisimple superalgebras} \label{subsec:simple-superalgebras}

Most of the material in this section comes from \cite[\S2]{Brundan:2002} and \cite[\S3.1]{Cheng:2012}. For the authors' benefit, we write out some of the details that were left to the reader in \cite{Brundan:2002,Cheng:2012}. As in \cite{Brundan:2002,Cheng:2012}, we make the standing assumption that each superalgebra is finite-dimensional.

A superalgebra $A$ is \emph{simple} if it has no nontrivial superideals.

\begin{example}[Type M simple superalgebras] \label{ex:M(m|n)}
Given a finite-dimensional superspace $V$, the endomorphism algebra $\End(V)$ is a simple superalgebra. Fixing a homogeneous basis for $V$, and making the identification $V \cong \C^{m|n} := \C^m \oplus \Pi(\C^n)$ for some $m,n \in \N$ via this choice of basis, $\End(V)$ identifies with the matrix superalgebra
	\[
	M(m|n) := \Set{ \left[	\begin{array}{c|c}
								A & B \\
								\hline
								C & D
								\end{array} \right] : A \in M_m(\C), B \in M_{m \times n}(\C), C \in M_{n \times m}(\C), D \in M_n(\C)}.
	\]
As an ungraded associative algebra, $M(m|n) = M_{m+n}(\C)$.
\end{example}

\begin{example}[Type Q simple superalgebras] \label{ex:Q(n)}
Let $V$ be a finite-dimensional vector superspace equipped with an odd involution $J : V \to V$; i.e., an odd linear map such that $J \circ J = \id_V$. Then
	\begin{equation} \label{eq:Q(V)}
	Q(V) = Q(V,J) = \set{ \theta \in \End(V) : J \circ \theta = \theta \circ J}
	\end{equation}
is a simple subsuperalgebra of $\End(V)$. Fix a basis $\set{v_1,\ldots,v_n}$ for $\Vzero$, and set $v_i' = J(v_i)$ for $1 \leq i \leq n$, so that $\set{v_1',\ldots,v_n'}$ is a basis for $\Vone$. Via this choice of homogeneous basis, one has $V \cong \C^{n|n}$ and $Q(V)$ identifies with the set of supermatrices
	\begin{equation} \label{eq:Q(n)}
	Q(n) := \Set{ \left[	\begin{array}{c|c}
							A & B \\
							\hline
							B & A
							\end{array} \right] : A \in M_n(\C), B \in M_n(\C) }.
	\end{equation}
As an ungraded associative algebra, $Q(n) \cong M_n(\C) \oplus M_n(\C)$ via the map $\sm{A & B \\ B & A} \mapsto (A+B,A-B)$.
\end{example}

\begin{remark}
In the literature, the definition \eqref{eq:Q(V)} is frequently stated with the requirement that the graded commutator $J \circ \theta - (-1)^{\ol{\theta}} \cdot \theta \circ J$ be equal to $0$, rather than the requirement that the ordinary commutator $J \circ \theta - \theta \circ J$ be equal to $0$. We find it more convenient to use the version stated here. Through appropriate choices of homogeneous bases, both versions admit the matrix realization \eqref{eq:Q(n)}. For related discussion, see \cite[\S 1.1.4]{Cheng:2012}.
\end{remark}

Given an associative superalgebra $A$, let $\abs{A}$ denote the underlying associative algebra obtained by forgetting the superspace structure on $A$. Let
	\[
	Z(A) = \{ a \in A : ab = (-1)^{\ol{a} \cdot \ol{b}} ba \text{ for all } b \in A \}
	\]
be the graded center of $A$ (i.e., the center in the sense of superalgebras), and let
	\[
	Z(\abs{A}) = \set{ a \in A : ab = ba \text{ for all } b \in A}
	\]
be the ungraded center of $A$ (i.e., the center in the ordinary, non-super sense). Then $Z(A)$ and $Z(\abs{A})$ are each subsuperspaces of $A$, i.e., $Z(B) = Z(B)_{\zero} \oplus Z(B)_{\one}$ for $B \in \left\{A, \abs{A}\right\}$. Also note that $Z(A)_{\zero} = Z(\abs{A})_{\zero}$.

\begin{example}
Let $m,n \in \N$.
	\begin{enumerate}
	\item $Z(M(m|n)) = Z(M(m|n))_{\zero} = Z(\abs{M(m|n)})$, spanned by the identity matrix $I_{m|n}$.
	\item $Z(Q(n))_{\zero}$ is spanned by the identity matrix $I_{n|n}$.
	\item $Z(Q(n))_{\one} = 0$, but $Z(\abs{Q(n)})_{\one}$ is nonzero, spanned by the `odd identity matrix' $\sm{0 & I_n \\ I_n & 0}$.
	\end{enumerate}
\end{example}

\begin{theorem}[{\cite[Theorem 3.1]{Cheng:2012}}]
Let $A$ be a finite-dimensional simple associative superalgebra.
	\begin{enumerate}
	\item If $Z(\abs{A})_{\one} = 0$, then $A \cong M(m|n)$ for some $m,n \in \N$.
	\item If $Z(\abs{A})_{\one} \neq 0$, then $A \cong Q(n)$ for some $n \in \N$.
	\end{enumerate}
\end{theorem}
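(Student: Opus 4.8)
The plan is to reduce to the classical Artin--Wedderburn theorem by studying the \emph{parity automorphism} $\sigma$ of $\abs{A}$, the $\C$-algebra automorphism determined on homogeneous elements by $\sigma(a) = (-1)^{\ol{a}}a$. Thus $A_{\zero} = \set{a \in A : \sigma(a) = a}$ and $A_{\one} = \set{a \in A : \sigma(a) = -a}$, and since $\tfrac12 \in \C$, a two-sided ideal $I$ of $\abs{A}$ is graded (hence a superideal) if and only if $\sigma(I) = I$: from $a \in I$ one recovers $a_{\zero} = \tfrac12(a + \sigma(a)) \in I$ and $a_{\one} = \tfrac12(a - \sigma(a)) \in I$. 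First I would observe that $\abs{A}$ is semisimple: the Jacobson radical $\rad(\abs{A})$ is invariant under every algebra automorphism, in particular under $\sigma$, hence is a superideal, hence is $0$ because $A$ is simple. By the classical Artin--Wedderburn theorem, $\abs{A} \cong M_{N_1}(\C) \times \cdots \times M_{N_r}(\C)$.

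Next I would analyze the action of $\sigma$ on the primitive central idempotents $e_1, \dots, e_r$ of $\abs{A}$, which $\sigma$ permutes and with $\sigma^2 = \id$. The sum of the $e_i$ over a single $\sigma$-orbit is a $\sigma$-fixed central idempotent and thus generates a superideal, so simplicity of $A$ forces $\sigma$ to act transitively. Hence either (i) $r = 1$, or (ii) $r = 2$ with $\sigma$ interchanging $e_1$ and $e_2$; in case (ii), $\sigma$ restricts to an algebra isomorphism $e_1\abs{A} \simrightarrow e_2\abs{A}$, so $N_1 = N_2 =: N$. In case (i) the center $Z(\abs{A}) = \C \cdot 1$ is purely even, so $Z(\abs{A})_{\one} = 0$; in case (ii), $Z(\abs{A})$ has basis $\{1, e_1 - e_2\}$, with $1 = e_1 + e_2$ even and $e_1 - e_2$ odd (as $\sigma(e_1 - e_2) = -(e_1 - e_2)$), so $Z(\abs{A})_{\one} = \C(e_1 - e_2) \neq 0$. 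Thus the two cases are exactly the two clauses of the theorem, and it remains to identify $A$.

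In case (i), $\abs{A} = M_N(\C)$, so every automorphism is inner and $\sigma(a) = g a g^{-1}$ for some $g \in GL_N(\C)$; since $\sigma^2 = \id$, $g^2$ is a nonzero scalar, and rescaling $g$ we may assume $g^2 = I_N$. Then $g$ is diagonalizable with eigenvalues $\pm 1$, so in a suitable basis $g = \sm{I_m & 0 \\ 0 & -I_n}$ with $m + n = N$, and a direct computation shows that $A_{\zero}$ is the space of block-diagonal matrices and $A_{\one}$ the space of block-off-diagonal matrices; that is, $A \cong M(m|n)$. In case (ii), after adjusting the identification of the second factor by an inner automorphism we may assume $\sigma(x,y) = (y,x)$, so $A_{\zero} = \set{(x,x)}$ and $A_{\one} = \set{(x,-x)}$; then I would check that the map sending $(x,y)$ to $\sm{\tfrac12(x+y) & \tfrac12(x-y) \\ \tfrac12(x-y) & \tfrac12(x+y)}$ is a bijective algebra homomorphism $\abs{A} \to \abs{Q(N)}$, with inverse $\sm{P & R \\ R & P} \mapsto (P+R, P-R)$, that carries $A_{\zero}$ onto $Q(N)_{\zero}$ and $A_{\one}$ onto $Q(N)_{\one}$, so that $A \cong Q(N)$.

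The bulk of the work is the (routine) bookkeeping in the last paragraph: checking that the explicit maps respect multiplication and are even with respect to the gradings. The one step that needs real care is ``$\sigma$ inner $\Rightarrow$ after rescaling $g^2 = I_N$ and $g$ is diagonalizable,'' which is precisely where one uses that the base field is $\C$ (invertibility of $2$ and existence of square roots); granting this, everything else works over any algebraically closed field of characteristic not $2$.
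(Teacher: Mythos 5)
The paper offers no proof of this statement at all --- it is quoted verbatim from \cite[Theorem 3.1]{Cheng:2012} --- so there is no internal argument to compare against; the only question is whether your blind proof is sound, and it is. Your reduction works: $\rad(\abs{A})$ is stable under the parity automorphism $\sigma$, hence is a superideal, hence zero; $\sigma$ permutes the primitive central idempotents of $\abs{A}$ and simplicity forces a single orbit, so either $r=1$ or $r=2$ with blocks of equal size; the dichotomy $Z(\abs{A})_{\one}=0$ versus $Z(\abs{A})_{\one}\neq 0$ matches these two cases exactly; and the concluding identifications with $M(m|n)$ (via Skolem--Noether, rescaling $g$ so that $g^{2}=I_N$, and diagonalizing $g$) and with $Q(N)$ (via the map $(x,y)\mapsto\left[\begin{smallmatrix} \frac{x+y}{2} & \frac{x-y}{2} \\ \frac{x-y}{2} & \frac{x+y}{2}\end{smallmatrix}\right]$, the inverse of the map $\left[\begin{smallmatrix} A & B \\ B & A\end{smallmatrix}\right]\mapsto(A+B,A-B)$ already noted in \cref{ex:Q(n)}) are correct and respect the gradings. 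One small wording point: in case (ii) what you actually adjust is the chosen identification $e_2\abs{A}\cong M_N(\C)$, replacing it by its composite with the isomorphism that $\sigma$ induces between the two blocks; that this amounts to composing with an inner automorphism is again Skolem--Noether, so your phrasing is harmless. This parity-automorphism reduction to the classical Artin--Wedderburn theorem is the standard route to the classification and is in the same spirit as the paper's own \cref{lemma:ss-super-ordinary}, so nothing further is needed.
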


\begin{definition}[Type M and Q simple supermodules] \label{def:Type-MQ-simples}
Let $A$ be an associative super\-algebra and let $V$ be a simple $A$-supermodule, i.e., an $A$-supermodule having no proper $A$-sub\-super\-modules. Then either $V$ is simple as an $\abs{A}$-module, in which case $V$ is said to be of \emph{Type~M} (or \emph{absolutely irreducible}, in the terminology of \cite{Brundan:2002}), or else $V$ is reducible as an $\abs{A}$-module, in which case $V$ is said to be of \emph{Type Q} (or \emph{self-associate}, in the terminology of \cite{Brundan:2002}).
\end{definition}

Given a superspace $V$, let $\pi_V: V \to V$ be the parity automorphism, defined by
	\[
	\pi_V(v) = (-1)^{\ol{v}} v.
	\]
In particular, $\pi_A: A \to A$ is a superalgebra automorphism. A subspace $U$ of a vector super\-space $V$ is a sub\emph{super}space of $V$ if and only if $\pi_V(U) = U$. Given an $\abs{A}$-module $U$, let $\pi_A^*(U)$ be the $\abs{A}$-module obtained by pulling back the module structure along $\pi_A$. Thus for $a \in A$ and $u \in U$, one has
	\[
	a. \pi_A^*(u) = (-1)^{\ol{a}} \cdot \pi_A^*(a.u).
	\]
If $U$ is an $\abs{A}$-submodule of an $A$-supermodule $V$, then $\pi_V(U)$ is also an $\abs{A}$-submodule of $V$, and the map $\pi_V(U) \to \pi_A^*(U)$, $\pi_V(u) \mapsto \pi_A^*(u)$, is an $\abs{A}$-module isomorphism. In particular, for each $A$-super\-module $V$, one has $V = \pi_V(V) \cong \pi_A^*(V)$ as $A$-supermodules.

\begin{lemma}[{\cite[Lemma 2.3]{Brundan:2002}}] \label{lemma:self-associate}
Let $V$ be a finite-dimensional simple $A$-super\-module of Type Q, and let $U$ be a proper simple $\abs{A}$-submodule of $V$. Then as an $\abs{A}$-module,
	\[
	V = U \oplus \pi_V(U) \cong U \oplus \pi_A^*(U),
	\]
with $U \not\cong \pi_V(U)$ as $\abs{A}$-modules, and the homogeneous subspaces of $V$ are 
	\[
	\Vzero = \set{ u + \pi_V(u) : u \in U} \quad \text{and} \quad \Vone = \set{ u - \pi_V(u) : u \in U}.
	\]
In particular, if $u_1,\ldots,u_n$ is a basis for $U$, then
	\[
	\set{u_1+\pi_V(u_1),\ldots,u_n+\pi_V(u_n)} \quad \text{and} \quad \set{u_1-\pi_V(u_1),\ldots,u_n-\pi_V(u_n)}
	\]
are bases for $\Vzero$ and $\Vone$, respectively.

The linear map $J = J_V: V \to V$, defined for $u \in U$ by $J(u \pm \pi_V(u)) = u \mp \pi_V(u)$, is an $\abs{A}$-module homo\-morphism. Considered as a function $J: V \to \Pi(V)$, $u \pm \pi_V(u) \mapsto [u \mp \pi_V(u)]^\pi$, the map $J$ is an even $A$-supermodule isomorphism $V \cong \Pi(V)$.
\end{lemma}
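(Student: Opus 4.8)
The plan is to verify the three asserted structures directly from the data provided by the Type Q hypothesis. Since $V$ is of Type Q, it is reducible as an $\abs{A}$-module; pick a proper simple $\abs{A}$-submodule $U$. The first step is to show $U \not\cong \pi_V(U)$ as $\abs{A}$-modules: if there were an $\abs{A}$-isomorphism $\phi : U \to \pi_V(U)$, then the image of $\id_U + \pi_V \circ \phi$ (or a suitable variant making it an $A$-supermodule map into $V$) would give an $A$-subsupermodule of $V$; since $V$ is simple, this forces $U + \pi_V(U) = V$, but then $V$ would be simple as an $\abs{A}$-module by a Jordan--H\"older/semisimplicity count, contradicting Type Q. Once $U \not\cong \pi_V(U)$, the submodule $U + \pi_V(U)$ is a nonzero $A$-subsupermodule of $V$ (it is stable under $A$ since $U$ is an $\abs{A}$-submodule, and stable under $\pi_V$ by construction), so by simplicity of $V$ we get $V = U \oplus \pi_V(U)$, with the sum direct because the two summands are non-isomorphic simple $\abs{A}$-modules. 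The isomorphism $\pi_V(U) \cong \pi_A^*(U)$ is exactly the general fact recorded just before the lemma.

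Next I would pin down the homogeneous subspaces. For $u \in U$, write $u = u_{\zero} + u_{\one}$ in $V = \Vzero \oplus \Vone$; applying $\pi_V$ gives $\pi_V(u) = u_{\zero} - u_{\one}$, whence $u + \pi_V(u) = 2u_{\zero} \in \Vzero$ and $u - \pi_V(u) = 2u_{\one} \in \Vone$. Conversely, since $V = U + \pi_V(U)$, every element of $\Vzero$ (resp.\ $\Vone$) is of the stated form, so $\Vzero = \set{u + \pi_V(u) : u \in U}$ and $\Vone = \set{u - \pi_V(u) : u \in U}$. Because $U \to \Vzero$, $u \mapsto u + \pi_V(u)$, and $U \to \Vone$, $u \mapsto u - \pi_V(u)$, are linear bijections (injectivity from the directness of $U \oplus \pi_V(U)$, surjectivity just shown), a basis $u_1,\ldots,u_n$ of $U$ is carried to the claimed bases of $\Vzero$ and $\Vone$.

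Finally I would check the assertions about $J$. Define $J$ on $V$ by $J(u \pm \pi_V(u)) = u \mp \pi_V(u)$; this is well-defined and linear because $\set{u + \pi_V(u)}$ and $\set{u - \pi_V(u)}$ jointly span $V$ with the above bases, and it is visibly odd since it swaps $\Vzero$ and $\Vone$, and an involution since applying it twice returns $u \pm \pi_V(u)$. To see $J$ is an $\abs{A}$-module homomorphism $V \to V$: for $a \in A$ and $u \in U$, $a.u \in U$ lies in the same summand, so $a.(u + \pi_V(u)) = (a.u) + \pi_V(a.u)$ and $J$ of this is $(a.u) - \pi_V(a.u) = a.J(u+\pi_V(u))$, using $\pi_V(a.u) = a.\pi_V(u)$ on the $\pi_V(U)$ side (here one must be careful: $\pi_V$ is the parity map, not an $\abs{A}$-map, but $a.\pi_V(u) = (-1)^{\ol a}\pi_V(a.u)$, and tracking the sign in the $+$ and $-$ combinations is exactly what makes $J$ commute with the $\abs{A}$-action while being odd as a map of superspaces — this bookkeeping is the one genuinely fiddly point). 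Reinterpreting $J$ as a map $V \to \Pi(V)$ via $(-)^\pi$ turns this odd $\abs{A}$-intertwiner into an even $A$-supermodule map, and it is bijective since $J$ is an involution, giving $V \cong \Pi(V)$. The only real obstacle is the sign bookkeeping in this last step; everything else is a direct unwinding of the Type Q hypothesis and the preceding generalities.
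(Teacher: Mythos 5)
There is a genuine gap, and it sits exactly at the one point the paper actually proves (everything else in the lemma is deferred to Brundan's Lemma 2.3): the claim $U \not\cong \pi_V(U)$. Your argument asserts that, given an $\abs{A}$-isomorphism $\phi : U \to \pi_V(U)$, the image of $\id_U + \pi_V\circ\phi$ ``(or a suitable variant)'' is an $A$-subsupermodule of $V$. But such a graph-type subspace is a priori only an $\abs{A}$-submodule; it is an $A$-\emph{sub\-super\-module} only if it is stable under the parity map $\pi_V$, i.e.\ is a graded subspace, and for an arbitrary choice of $\phi$ this fails. The ``suitable variant'' is precisely the missing idea and is the heart of the paper's proof: one passes to the twisted self-map $\pi_V\circ\phi$ of $U$, which satisfies $(\pi_V\circ\phi)(a.u) = (-1)^{\ol{a}}\,a.(\pi_V\circ\phi)(u)$, observes via Schur's Lemma that its square lies in $\End_{\abs{A}}(U) = \C\cdot\id_U$, and rescales so that the square equals $\id_U$. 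Only after this normalization is the subspace $\set{(\pi_V\circ\phi)(u) + \pi_V(u) : u \in U}$ closed under taking homogeneous components (the paper verifies this by an explicit even/odd decomposition that uses the involution property), hence a nonzero proper $A$-subsupermodule contradicting the simplicity of $V$. Without the Schur rescaling there is no subsupermodule and no contradiction. Moreover, the contradiction you then draw is misstated: simplicity would force the graph to equal $V$, but your inference ``this forces $U + \pi_V(U) = V$, but then $V$ would be simple as an $\abs{A}$-module'' is a non sequitur --- $U + \pi_V(U) = V$ holds in any case in the Type Q situation, and $V = U \oplus \pi_V(U)$ is never $\abs{A}$-simple when $U$ is proper. (The correct dimension count would be that the graph has dimension $\dim U < \dim V$, so it is proper and simplicity of $V$ as a supermodule is contradicted outright.)

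The remaining parts of your proposal --- directness of $U \oplus \pi_V(U)$, the identification of $\Vzero$ and $\Vone$ with $\set{u \pm \pi_V(u) : u \in U}$, the resulting bases, and the sign bookkeeping showing that $J$ is an $\abs{A}$-homomorphism and, viewed as a map to $\Pi(V)$, an even $A$-supermodule isomorphism --- are essentially correct and follow the standard route, but these are exactly the details the paper leaves to the cited reference, so they do not compensate for the gap above.
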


\begin{proof}
Most of the details of the proof are given in \cite{Brundan:2002}, though one point that is not explicitly explained is the fact that $U \not\cong \pi_V(U)$. Here is a justification for this statement. Let $\pi = \pi_V$.

Suppose for the sake of argument that there exists an $\abs{A}$-module isomorphism $\psi: U \to \pi(U)$. Let $\phi = \pi \circ \psi : U \to U$. Then also $\pi \circ \phi = \psi$, because $\pi \circ \pi = \id_V$, and $\phi$ is a linear bijection such that for all $a \in A$ and $u \in U$ one has $\phi(a \cdot u) = (-1)^{\ol{a}} a \cdot \phi(u)$. Consequently, $\phi^2 : U \to U$ is an $\abs{A}$-module isomorphism, so by Schur's Lemma it is a nonzero scalar multiple of the identity. Rescaling $\phi$ if necessary, we may assume that $\phi^2 = \id_U$.

Now since $V = U \oplus \pi(U)$ and $V = \Vzero \oplus \Vone$, it follows that also  $V = U^+ \oplus U^-$, where
	\begin{align*}
	U^+ &= \set{ \phi(u) + \pi(u) : u \in U} = \set{ u + \pi(\phi(u)) : u \in U}, \quad \text{and} \\
	U^- &= \set{ \phi(u) - \pi(u) : u \in U} = \set{ u - \pi(\phi(u)) : u \in U}.
	\end{align*}
For $u \in U$, the decomposition of $\phi(u) + \pi(u)$ into its even and odd components is
	\begin{align*}
	\phi(u) + \pi(u) &= \Big( \tfrac{1}{2} [ \phi(u) + \pi(\phi(u)) ] + \tfrac{1}{2} [ \phi(u) - \pi(\phi(u)) ] \Big) + \Big( \tfrac{1}{2} [ u + \pi(u) ] - \tfrac{1}{2} [ u - \pi(u) ] \Big) \\
	&= \tfrac{1}{2} \Big( [ \phi(u) + \pi(\phi(u)) ] + [ u + \pi(u) ] \Big) + \tfrac{1}{2} \Big( [ \phi(u) - \pi(\phi(u)) ] - [ u - \pi(u) ] \Big) \\
	&= \tfrac{1}{2} \Big( [ \phi(u) + \pi(u) ] + [ u + \pi(\phi(u)) ] \Big) + \tfrac{1}{2} \Big( [ \phi(u) + \pi(u) ] - [ u + \pi(\phi(u)) ] \Big).
	\end{align*}
After the second and third equals signs, the expressions within the big parentheses are homogeneous of even and odd superdegree, respectively. This shows that $U^+$ is a subsuperspace of $V$. Finally, for $a \in A$ one has
	\[
	a \cdot [ \phi(u) + \pi(u) ] = (-1)^{\ol{a}} \cdot [ \phi(a \cdot u) + \pi(a \cdot u) ],
	\]
so $U^+$ is a proper $A$-subsupermodule of $V$. In a similar fashion, $U^-$ is a proper $A$-subsupermodule of $V$. Then $V$ is a direct sum of two proper sub\-super\-modules, a contradiction.
\end{proof}

\begin{remark}
The decomposition of a Type Q simple $A$-supermodule into a direct sum of non-iso\-morphic simple $\abs{A}$-modules is canonical, by the uniqueness of isotypical components.
\end{remark}

\begin{lemma}[Super Schur Lemma]
Let $V$ be a finite-dimensional simple $A$-supermodule. Then
	\[
	\End_{\abs{A}}(V) = \begin{cases}
	\Span\set{ \id_V} & \text{if $V$ is of Type M,} \\
	\Span\set{ \id_V, J_V} & \text{if $V$ is of Type Q,}
	\end{cases}
	\]
where $J_V$ is defined as in \cref{lemma:self-associate}. In particular, if $V$ is of Type Q, then $J_V$ is the unique $\abs{A}$-module homomorphism (up to scalar multiples) that is homogeneous of odd superdegree.
\end{lemma}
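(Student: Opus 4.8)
The plan is to reduce both cases to the ordinary Schur Lemma over the algebraically closed field $\C$, using \cref{lemma:self-associate} to handle Type Q. Suppose first that $V$ is of Type M. By definition $V$ is simple as an $\abs{A}$-module, so $\End_{\abs{A}}(V)$ is a finite-dimensional division algebra over the algebraically closed field $\C$, and hence equals $\Span\set{\id_V}$.

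Now suppose $V$ is of Type Q, and let $U \subseteq V$ be a proper simple $\abs{A}$-submodule. By \cref{lemma:self-associate} we have $V = U \oplus \pi_V(U)$ as $\abs{A}$-modules, with $U$ and $\pi_V(U)$ simple and non-isomorphic, and the map $J_V$ lies in $\End_{\abs{A}}(V)$. Writing an $\abs{A}$-linear endomorphism of $V$ in block form relative to this direct sum decomposition identifies
	\[
	\End_{\abs{A}}(V) \cong \End_{\abs{A}}(U) \oplus \Hom_{\abs{A}}(\pi_V(U),U) \oplus \Hom_{\abs{A}}(U,\pi_V(U)) \oplus \End_{\abs{A}}(\pi_V(U)).
	\]
The two inner summands vanish because $U \not\cong \pi_V(U)$ are simple, while the two outer summands are each one-dimensional by the ordinary Schur Lemma over $\C$; hence $\dim \End_{\abs{A}}(V) = 2$. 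Since $\id_V$ and $J_V$ both lie in $\End_{\abs{A}}(V)$, it suffices to check they are linearly independent. From the defining formula $J_V(u \pm \pi_V(u)) = u \mp \pi_V(u)$ one computes directly that $J_V$ acts as $+\id$ on $U$ and as $-\id$ on $\pi_V(U)$, whereas $\id_V$ acts as $+\id$ on both; so $\set{\id_V, J_V}$ is a basis of $\End_{\abs{A}}(V)$.

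For the final assertion, note that $\id_V$ is an even linear map, while the same computation shows that $J_V$ interchanges $\Vzero$ and $\Vone$ and is therefore odd. Hence $\End_{\abs{A}}(V) = \C\id_V \oplus \C J_V$ is exactly the decomposition of the superspace $\End_{\abs{A}}(V)$ into its even and odd parts, so every homogeneous odd $\abs{A}$-module homomorphism $V \to V$ is a scalar multiple of $J_V$. I do not anticipate a genuine obstacle here: once \cref{lemma:self-associate} supplies the decomposition into non-isomorphic simple $\abs{A}$-modules together with the endomorphism $J_V$, the lemma is immediate from the classical Schur Lemma. The only point needing care is the elementary verification that $J_V \notin \C\id_V$, which is settled by recording its explicit scalar action on the two summands $U$ and $\pi_V(U)$.
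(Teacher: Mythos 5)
Your proof is correct and follows essentially the same route as the paper: both reduce to the classical Schur Lemma via the decomposition $V = U \oplus \pi_V(U)$ into non-isomorphic simple $\abs{A}$-modules from \cref{lemma:self-associate}, your observation that $J_V$ acts as $+\id$ on $U$ and $-\id$ on $\pi_V(U)$ being exactly the paper's identity $J_V = \id_U - \id_{\pi_V(U)}$. Your explicit verification of the final assertion about odd homomorphisms is a small addition the paper leaves implicit, but the argument is the same.
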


\begin{proof}
If $V$ is of Type M, the lemma is true by the classical Schur's Lemma. If $V$ is of Type Q, the classical Schur's Lemma gives $\End_{\abs{A}}(V) = \Span \{ \id_U, \id_{\pi(U)} \}$, with notation as in \cref{lemma:self-associate}. Since $\id_V = \id_U + \id_{\pi(U)}$ and $J_V = \id_U - \id_{\pi(U)}$, the result follows.
\end{proof}

\begin{remark} \label{rem:J_V-understood}
Henceforward, if $V$ is a finite-dimensional simple $A$-supermodule of Type Q, we will write $Q(V)$ to denote $Q(V,J_V)$.
\end{remark}

An $A$-supermodule $V$ is \emph{semisimple} if every subsupermodule of $V$ is a direct summand, or equivalently, if $V$ is a (direct) sum of simple $A$-supermodules.

\begin{theorem}[Super Artin--Wedderburn Theorem {\cite[Theorem 3.3]{Cheng:2012}}] \label{theorem:super-wedderburn}
The following statements are equivalent for a finite-dimensional associative superalgebra $A$:
	\begin{enumerate}
	\item Every $A$-supermodule is semisimple.
	\item The left regular $A$-module is a direct sum of minimal left superideals.
	\item The superalgebra $A$ is a direct sum of simple superalgebras. Specifically, if $\set{V_1,\ldots,V_n}$ is a complete, ir\-re\-dun\-dant set of simple $A$-super\-modules (up to homo\-geneous isomorphism), such that $V_1,\ldots,V_n$ are of Type M and $V_{m+1},\ldots,V_n$ are of Type Q, then the natural maps $A \to \End(V_i)$, arising from the $A$-supermodule structures on the $V_i$, induce a superalgebra isomorphism
		\[
		A \cong \left(\bigoplus_{i=1}^m \End(V_i) \right) \oplus \left( \bigoplus_{i=m+1}^n Q(V_i) \right).
		\]
	\end{enumerate}
A superalgebra that satisfies these conditions is called \emph{semisimple}.
\end{theorem}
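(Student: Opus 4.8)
The plan is to transport the proof of the classical Artin--Wedderburn theorem into the graded category, with the super Schur Lemma and \cref{lemma:self-associate} supplying the only genuinely new input, namely the bookkeeping around Type Q blocks. The equivalence $(1)\Leftrightarrow(2)$ is pure (graded) module theory. One first checks the graded analogues of the standard facts: a supermodule is a direct sum of simple subsupermodules if and only if it is the sum of all of its simple subsupermodules if and only if every subsupermodule is a direct summand; and the class of such ``semisimple'' supermodules is closed under subsupermodules, quotient supermodules, and arbitrary direct sums. The usual Zorn's Lemma arguments go through verbatim, every subspace and map in sight being a subsuperspace, respectively a homogeneous homomorphism. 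Given this, $(1)\Rightarrow(2)$ is immediate since the left regular module ${}_{A}A$ is an $A$-supermodule, and $(2)\Rightarrow(1)$ follows because an arbitrary $A$-supermodule is a quotient of a free module $A^{(I)}$, which is a sum of simple subsupermodules by $(2)$.

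The substance lies in $(2)\Rightarrow(3)$. Fix a decomposition ${}_{A}A=\bigoplus_{k}L_k$ into minimal left superideals. Every simple $A$-supermodule occurs among the $L_k$ up to isomorphism: it is a quotient of ${}_{A}A$, hence receives a nonzero---so by Schur's Lemma invertible---even homomorphism from some $L_k$. Let $V_1,\dots,V_n$ be representatives for the distinct \emph{homogeneous}-isomorphism classes, ordered so that $V_1,\dots,V_m$ are Type M and $V_{m+1},\dots,V_n$ are Type Q, and for each $i$ let $B_i$ be the sum of those $L_k$ with $L_k\cong V_i$ or $L_k\cong\Pi(V_i)$. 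Applying Schur's Lemma to the right-multiplication maps $L\to L'$, $x\mapsto xy$ (for homogeneous $y$, and minimal left superideals $L,L'$ in different classes), one finds $B_iB_j=0$ for $i\neq j$ while $B_iB_i\subseteq B_i$; hence each $B_i$ is a two-sided superideal, $A=\bigoplus_{i}B_i$ as superalgebras, and $1=\sum_i e_i$ for orthogonal central idempotents $e_i$ (necessarily even), with $e_i$ the identity of $B_i$.

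It remains to identify each $B_i$. Since $e_j$ acts on $V_i$ as the identity if $j=i$ and as $0$ otherwise, the structure map $\rho_i\colon A\to\End(V_i)$ has kernel $\bigoplus_{j\neq i}B_j$ and restricts to an injective even superalgebra homomorphism $B_i\hookrightarrow\End(V_i)$ with image $\rho_i(A)$. Because $V_i$ is finite-dimensional and semisimple as an $\abs{A}$-module---of length $\le 2$ by \cref{lemma:self-associate}---the ordinary Jacobson density theorem identifies $\rho_i(A)$ with the centralizer in $\End(V_i)$ of $D_i:=\End_{\abs{A}}(V_i)$. The super Schur Lemma computes $D_i=\Span \{\id_{V_i}\}$ in Type M, whence $B_i\cong\End(V_i)$, and $D_i=\Span \{\id_{V_i},J_{V_i}\}$ in Type Q, whose centralizer is exactly $Q(V_i,J_{V_i})=Q(V_i)$ by \eqref{eq:Q(V)}; combining with $A=\bigoplus_iB_i$ gives the asserted isomorphism. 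To close the circle, $(3)\Rightarrow(2)$ is an elementary matrix-unit computation: the ``columns'' of $\End(V)$ relative to a homogeneous basis of $V$ are minimal left superideals (each $\cong V$ or $\cong\Pi(V)$) whose direct sum is $\End(V)$; the regular module of $Q(n)$ is, in the realization \eqref{eq:Q(n)}, a direct sum of $n$ copies of the simple supermodule $\C^{n|n}$; and a finite direct sum of superalgebras with property $(2)$ again has property $(2)$.

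I expect the main obstacle to be the identification step just above---getting the Type Q blocks to come out as $Q(V_i)$ rather than as all of $\End(V_i)$. The delicate points are that a minimal left \emph{super}ideal need not be a minimal left ideal (the Type Q ones split as $U\oplus\pi(U)$), so one must be careful about which ``isotypic component'' and which ``centralizer'' is meant; and that the cleanest argument runs through the \emph{ungraded} density theorem applied to $\abs{A}$ acting on $V_i$, after which one verifies that the resulting centralizer is automatically $\Z_2$-graded and coincides with the superalgebra $Q(V_i)$. This is precisely where \cref{lemma:self-associate} and the super Schur Lemma earn their keep.
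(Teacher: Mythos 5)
The paper does not actually prove this theorem: it is quoted verbatim from \cite[Theorem 3.3]{Cheng:2012}, so there is no in-paper argument to compare yours against. Your proposal is a correct and essentially standard transposition of the classical Artin--Wedderburn proof, and its key step --- identifying the Type Q blocks by applying the ungraded double centralizer theorem to $V_i$ as a semisimple $\abs{A}$-module (of length $\leq 2$, by \cref{lemma:self-associate}) and computing the centralizer of $D_i = \Span\set{\id_{V_i},J_{V_i}}$ to be exactly $Q(V_i,J_{V_i})$ as in \eqref{eq:Q(V)} --- is sound, and is consistent with the paper's convention that $Q(V,J)$ is defined by the \emph{ordinary} commutation $J\circ\theta=\theta\circ J$.

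A few small points deserve a line each if you write this up. First, the surjection ${}_AA \to V$ obtained from $a\mapsto av$ for a homogeneous generator $v$ is homogeneous but not necessarily even, so the nonzero map $L_k \to V$ you get is an isomorphism only up to parity shift; this is harmless because your blocks $B_i$ lump together $L_k\cong V_i$ and $L_k\cong\Pi(V_i)$, but ``even homomorphism'' should read ``homogeneous homomorphism.'' Second, under the paper's sign convention for supermodule homomorphisms, right multiplication by homogeneous $y$ is a homomorphism only after the twist $x\mapsto(-1)^{\ol{x}\cdot\ol{y}}xy$; the set $L_k y$ is the same either way, and it is a graded left ideal inside the minimal superideal containing $y$, so the computation $B_iB_j=0$ for $i\neq j$ goes through. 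Third, injectivity of $B_i\hookrightarrow\End(V_i)$ needs the observation that $\Ann_A(V_i)=\Ann_A(\Pi(V_i))$, so an element of $B_i$ killing $V_i$ kills every $L_k$ in the $i$-th class as well as all other blocks, hence kills $1$ and is zero. With these touch-ups your argument is complete and matches the approach one finds in \cite{Cheng:2012}.
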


\begin{lemma} \label{lemma:ss-super-ordinary}
Let $A$ be a finite-dimensional superalgebra. Then $A$ is semisimple (as a super\-algebra) if and only if $\abs{A}$ is semisimple (as an ordinary algebra).
\end{lemma}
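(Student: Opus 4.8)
The plan is to prove the two implications separately, using the Super Artin--Wedderburn Theorem (\cref{theorem:super-wedderburn}) to pass between the algebra and module pictures. For ``$A$ semisimple $\Rightarrow$ $\abs{A}$ semisimple'', I would invoke condition~(2) of \cref{theorem:super-wedderburn}: the left regular module ${}_A A$ is a direct sum of minimal left superideals, each a simple $A$-supermodule. Every simple $A$-supermodule is semisimple as an $\abs{A}$-module --- it is $\abs{A}$-simple if of Type M (by \cref{def:Type-MQ-simples}), and a direct sum of two $\abs{A}$-simple modules if of Type Q (by \cref{lemma:self-associate}). Hence $\abs{A}$ is a direct sum of simple modules over itself, so it is a semisimple algebra. (Alternatively, one could start from $A \cong \bigoplus_i \End(V_i) \oplus \bigoplus_j Q(V_j)$ and use $\abs{\End(V)} = M_{\dim(V)}(\C)$ and $\abs{Q(n)} \cong M_n(\C) \oplus M_n(\C)$ from \cref{ex:M(m|n)} and \cref{ex:Q(n)}.)

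For the converse, I would verify condition~(1) of \cref{theorem:super-wedderburn} by a super analogue of the averaging trick in Maschke's theorem. Let $V$ be an $A$-supermodule and $W \subseteq V$ a subsupermodule. Since $\abs{A}$ is semisimple, there is an $\abs{A}$-linear idempotent $p : V \to V$ with image $W$. The crucial point is that the parity automorphism satisfies $\pi_V(a \cdot v) = \pi_A(a) \cdot \pi_V(v)$, so conjugation by $\pi_V$ carries $\abs{A}$-linear maps to $\abs{A}$-linear maps (using $\pi_A^2 = \id$); moreover $\pi_V(W) = W$ since $W$ is a subsuperspace, so $\pi_V \circ p \circ \pi_V$ is again an idempotent onto $W$ that is the identity on $W$. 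Then $q := \tfrac{1}{2}\bigl(p + \pi_V \circ p \circ \pi_V\bigr)$ is an $\abs{A}$-linear idempotent with image $W$ that additionally commutes with $\pi_V$. Consequently $\ker(q)$ is simultaneously an $\abs{A}$-submodule and a $\pi_V$-stable subspace, hence an $A$-subsupermodule, and $V = W \oplus \ker(q)$ as $A$-supermodules. Thus every $A$-supermodule is semisimple, so $A$ is a semisimple superalgebra.

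I expect the only delicate point to be the bookkeeping in the converse: one must remember that $\pi_V$ is $\abs{A}$-\emph{semilinear} along $\pi_A$ rather than $\abs{A}$-linear, so care is needed to confirm that $q$ is $\abs{A}$-linear and commutes with $\pi_V$; granting that, the identification of $\ker(q)$ as a complementary subsupermodule is immediate. The factor $\tfrac{1}{2}$ is harmless since we work over $\C$.
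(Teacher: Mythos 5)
Your proposal is correct, and its forward direction is essentially the paper's argument (the paper passes through condition (3) of \cref{theorem:super-wedderburn} and the descriptions of $\abs{M(m|n)}$ and $\abs{Q(n)}$ in \cref{ex:M(m|n)} and \cref{ex:Q(n)}, which is exactly your parenthetical alternative). The converse, however, is where you genuinely diverge. The paper verifies condition (2) of \cref{theorem:super-wedderburn} by hand: it groups the simple $\abs{A}$-modules into $\pi_A$-orbits, decomposes $\abs{A}$ into isotypic components $A^{I_i}$, and shows each $\pi_A$-orbit of components is a direct sum of simple $A$-supermodules, rerunning the $U^{\pm}$ construction from the proof of \cref{lemma:self-associate} for the $\pi_A$-fixed isotypic pieces. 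You instead verify condition (1) directly by a Maschke-type averaging over the order-two group $\set{\id_V,\pi_V}$: take an $\abs{A}$-linear projection $p$ onto the subsupermodule $W$, check that $\pi_V \circ p \circ \pi_V$ is again $\abs{A}$-linear because $\pi_V$ is semilinear along $\pi_A$ and $\pi_A^2 = \id$, and average to get a projection $q$ commuting with $\pi_V$, whose kernel is then a $\pi_V$-stable $\abs{A}$-submodule, i.e.\ a complementary subsupermodule. This is shorter and more conceptual, it applies to an arbitrary supermodule rather than just the left regular one, and it sidesteps the isotypic-component case analysis; the paper's route, by contrast, produces as a byproduct the explicit Type M/Type Q matching between simple $\abs{A}$-modules and simple $A$-supermodules that feeds into \cref{lemma:complete-irreducibles} and the later classification for $\CS_n$. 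One small point worth writing out: $p$ and $\pi_V \circ p \circ \pi_V$ need not commute, so idempotence of $q$ is not automatic from idempotence of the two terms; it follows instead from the observations that $\im(q) \subseteq W$ and that $q$ restricts to the identity on $W$, which you have already established.
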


\begin{proof}
If $A$ is a direct sum of simple superalgebras, then $\abs{A}$ is a direct sum of simple algebras, and hence is semisimple, by Examples \ref{ex:M(m|n)} and \ref{ex:Q(n)}. Conversely, suppose $\abs{A}$ is semisimple. Let $I_1,\ldots,I_{2m},I_{2m+1},\ldots,I_n$ be a complete set of pairwise non-isomorphic simple $\abs{A}$-modules, ordered so that $\pi_A(I_{2j}) \cong I_{2j-1}$ for $1 \leq j \leq m$, and $\pi_A(I_i) \cong I_i$ for $2m < i \leq n$. For $1 \leq i \leq n$, let $A^{I_i}$ be the sum of all minimal left ideals in $\abs{A}$ that are isomorphic to $I_i$ as left $\abs{A}$-modules. Then $\abs{A} = \bigoplus_{i=1}^n A^{I_i}$, and one has $\pi_A(A^{I_{2j}}) = A^{I_{2j-1}}$ for $1 \leq j \leq m$, and $\pi_A(A^{I_i}) = A^{I_i}$ for $2m < i \leq n$. This implies for $1 \leq j \leq m$ and $2m < i \leq n$ that $A^{I_{2j-1}} \oplus A^{I_{2j}}$ and $A^{I_i}$ are each sub\-super\-modules of the left regular representation of $A$. Given $1 \leq j \leq m$, fix a decomposition $A^{I_{2j}} = U_1 \oplus \cdots \oplus U_t$ of $A^{I_{2j}}$ into a direct sum of copies of $I_{2j}$. Then $A^{I_{2j-1}} \oplus A^{I_{2j}} = \bigoplus_{i=1}^t [\pi_A(U_i) \oplus U_i]$ is a direct sum decomposition of $A^{I_{2j-1}} \oplus A^{I_{2j}}$ into (Type Q) simple $A$-supermodules.

Now fix an integer $2m < i \leq n$, and set $I = I_i$. We will show that $A^I$ is a sum---hence a direct sum---of (Type M) simple $A$-supermodules. First, $A^I$ is a sum of minimal left ideals $U$ such that $U \cong I \cong \pi_A(I)$ as $\abs{A}$-modules, and for each of these ideals $U$ one has $U + \pi_A(U) \subseteq A^I$ because $\pi_A(A^I) = A^I$. Then $U + \pi_A(U)$ is an $A$-subsupermodule of $A^I$. Since $U$ is simple as an $\abs{A}$-module, one has either $U = \pi_A(U)$, in which case $U$ is a simple $A$-supermodule, or the sum $U + \pi_A(U)$ is direct. In the latter case, one can argue exactly as in the proof of \cref{lemma:self-associate} (but now, without reaching a contradiction) to show that $U + \pi_A(U)$ is a direct sum of two $A$-subsupermodules $U^+$ and $U^-$, each isomorphic as $A$-supermodules to $U$.
\end{proof}

Given a superalgebra $A$, one can check that $\Ann_A(\pi_A^*(M)) = \pi_A(\Ann_A(M))$ for each $\abs{A}$-module $M$. This implies that the Jacobson radical of $\abs{A}$ is closed under the parity map $\pi_A$, and hence is a superideal in $A$. Then the next lemma follows from \cref{lemma:ss-super-ordinary}.

\begin{lemma}[{\cite[Lemma 2.6]{Brundan:2002}}]
Let $A$ be a finite-dimensional superalgebra, and let $J = \rad(\abs{A})$ be the Jacobson radical of $\abs{A}$. Then $J$ is the unique smallest superideal of $A$ such that $A/J$ is a semisimple superalgebra. 
\end{lemma}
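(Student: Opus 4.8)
The plan is to deduce this lemma directly from \cref{lemma:ss-super-ordinary} together with the remark preceding the statement, following the strategy sketched there. First I would record the two preliminary observations contained in the paragraph just above the lemma: for any $\abs{A}$-module $M$ one has $\Ann_A(\pi_A^*(M)) = \pi_A(\Ann_A(M))$ (this is a routine check, since $\pi_A$ is an algebra automorphism of $\abs{A}$ and pulling back along it conjugates annihilators), and consequently, since $J = \rad(\abs{A})$ is the intersection of the annihilators of the simple $\abs{A}$-modules and the set of simple $\abs{A}$-modules is permuted by $M \mapsto \pi_A^*(M)$, the ideal $J$ satisfies $\pi_A(J) = J$; hence $J$ is a subsuperspace of $A$, and being a two-sided ideal it is in fact a superideal of $A$.

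Next I would establish that $A/J$ is a semisimple superalgebra. Since $J = \rad(\abs{A})$, the ordinary algebra $\abs{A/J} = \abs{A}/J$ is semisimple in the classical sense; by \cref{lemma:ss-super-ordinary} this is equivalent to $A/J$ being semisimple as a superalgebra. So $J$ is a superideal with semisimple quotient.

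Finally I would prove minimality: if $K$ is any superideal of $A$ with $A/K$ semisimple as a superalgebra, then by \cref{lemma:ss-super-ordinary} the ordinary algebra $\abs{A}/K = \abs{A/K}$ is semisimple, so $K \supseteq \rad(\abs{A}) = J$ by the defining property of the Jacobson radical as the smallest two-sided ideal with semisimple quotient. Combined with the previous paragraph, this shows $J$ is contained in every superideal with semisimple quotient and is itself such a superideal, which is exactly the assertion; uniqueness of the smallest such superideal is then immediate. I do not anticipate a genuine obstacle here — the only point requiring any care is the verification that $\pi_A$ fixes $\rad(\abs{A})$, and that reduces to the elementary annihilator identity already quoted in the excerpt.
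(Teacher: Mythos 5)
Your proposal is correct and follows the same route as the paper: the annihilator identity $\Ann_A(\pi_A^*(M)) = \pi_A(\Ann_A(M))$ shows $\rad(\abs{A})$ is $\pi_A$-stable and hence a superideal, after which both the semisimplicity of $A/J$ and the minimality among superideals with semisimple quotient are read off from \cref{lemma:ss-super-ordinary} together with the classical characterization of the radical. Your write-up simply makes explicit the details the paper leaves to the reader.
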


Finally, since each simple $A$-supermodule $M$ is a sum of simple $\abs{A}$-modules, one gets $\rad(\abs{A}) \subseteq \Ann_A(M)$, which implies that the superalgebras $A$ and $A/\rad(\abs{A})$ have the same simples. Then the next lemma follows by passing to the quotient $A/\rad(\abs{A})$, considering the left regular representations of $A$ and $\abs{A}$, and applying the Super Artin--Wedderburn Theorem.

\begin{lemma}[{\cite[Corollary 2.8]{Brundan:2002}}] \label{lemma:complete-irreducibles}
Let $A$ be a finite-dimensional superalgebra, and let $\set{V_1,\ldots,V_n}$ be a complete, irredundant set of simple $A$-supermodules (up to homogeneous isomorphism) such that $V_1,\ldots,V_m$ are of Type M and $V_{m+1},\ldots,V_n$ are of Type Q. For $m+1 \leq i \leq n$, write $V_i = V_i^+ \oplus V_i^-$ as a direct sum of simple $\abs{A}$-modules. Then
	\begin{equation} \label{eq:complete-A-irreducibles}
	\set{V_1,\ldots,V_m,V_{m+1}^{\pm},\ldots,V_n^{\pm}}
	\end{equation}
is a complete set of pairwise non-isomorphic simple $\abs{A}$-modules.
\end{lemma}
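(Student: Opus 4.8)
The plan is to reduce to the case that $A$ is semisimple, and then read the ordinary Artin--Wedderburn decomposition of $\abs{A}$ directly off the super Artin--Wedderburn decomposition of $A$. To do this, first pass to $B = A/\rad(\abs{A})$. As observed just above, $\rad(\abs{A})$ is a superideal of $A$; it is contained in $\Ann_A(M)$ for every simple $A$-supermodule $M$, and also in the annihilator of every simple $\abs{A}$-module, so $A$ and $B$ have the same simple supermodules and $\abs{A}$ and $\abs{B} = \abs{A}/\rad(\abs{A})$ have the same simple modules. Since $B$ is a semisimple superalgebra (as noted above), it suffices to prove the statement when $A$ itself is semisimple, which I assume from now on.

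By the Super Artin--Wedderburn Theorem (\cref{theorem:super-wedderburn}), the $A$-supermodule structures on the $V_i$ induce a superalgebra isomorphism $A \cong \big( \bigoplus_{i=1}^m \End(V_i) \big) \oplus \big( \bigoplus_{i=m+1}^n Q(V_i) \big)$, the projection onto the $i$-th summand being the corresponding module map. Forgetting the $\Z_2$-grading, $\abs{\End(V_i)} = \End(\abs{V_i})$ by \cref{ex:M(m|n)}, a matrix algebra whose natural module is $\abs{V_i}$; and for $i > m$, \cref{ex:Q(n)} together with \cref{lemma:self-associate} identifies $\abs{Q(V_i)}$ with $\End(V_i^+) \oplus \End(V_i^-)$, with the two matrix factors acting on $V_i$ through the projections onto $V_i^+$ and $V_i^-$. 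Hence $\abs{A}$ is a direct sum of matrix algebras over $\C$ whose natural modules are exactly the members of the list \eqref{eq:complete-A-irreducibles}. The classical Artin--Wedderburn theorem then finishes the proof: a finite direct sum of matrix algebras over $\C$ has, up to isomorphism, one simple module per factor, namely the natural module, and simple modules attached to distinct factors are non-isomorphic. (The simplicity of each $V_i^\pm$ as an $\abs{A}$-module, and the non-isomorphism $V_i^+ \not\cong V_i^-$, are already contained in \cref{lemma:self-associate}.)

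The argument is little more than bookkeeping, and the one step deserving care is the identification of $\abs{Q(V_i)}$ with $\End(V_i^+) \oplus \End(V_i^-)$ compatibly with the $A$-action: here one uses the explicit ungraded isomorphism $Q(n) \cong M_n(\C) \oplus M_n(\C)$ from \cref{ex:Q(n)}, together with the homogeneous bases of $V_i$ exhibited in \cref{lemma:self-associate}, to confirm that the two matrix factors act precisely as the endomorphism algebras of $V_i^+$ and of $V_i^-$.
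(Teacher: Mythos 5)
Your proof is correct and follows essentially the same route as the paper, whose (one-sentence) argument is exactly this: pass to the semisimple quotient $A/\rad(\abs{A})$ and compare the super Artin--Wedderburn decomposition with the ordinary one. Your reading off the ungraded matrix factors $\abs{\End(V_i)}$ and $\abs{Q(V_i)} \cong \End(V_i^+) \oplus \End(V_i^-)$ is just an explicit form of the paper's ``consider the left regular representations of $A$ and $\abs{A}$,'' so no substantive difference.
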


\subsection{Finite supergroups}

In this section, let $G$ be a finite group, and suppose $G$ contains a normal subgroup $H$ of index $2$. Let $\sgn : G \to G/H \cong \smallset{\pm 1}$ be the quotient homomorphism, considered also as a representation of $G$. Define a $\Z_2$-grading on $G$ by $G_{\zero} = H = \ker(\sgn)$ and $G_{\one} = G \backslash H$. This grading is multiplicative and it makes $G$ into a \emph{supergroup}. The $\Z_2$-grading on $G$ extends by linearity to a $\Z_2$-grading on the group algebra $\CG$, making $\CG$ into a superalgebra that we call the \emph{group superalgebra} of $G$. Since $\CG$ is semisimple as an ordinary algebra by Maschke's Theorem, then $\CG$ is semisimple as a superalgebra by \cref{lemma:ss-super-ordinary}.

Given a $\CH$-module $W$ and an element $t \in G_{\one}$, let ${}^t W = \smallset{ {}^t w : w \in W}$ be the \emph{conjugate} representation in which the action of an element $h \in H$ is defined by $h.{}^tw = {}^t [(tht^{-1}).w]$. Up to isomorphism, the conjugate representation does not depend on the particular choice of element in $G_{\one}$. We say that two $\CH$-modules $W$ and $W'$ are conjugate if $W' \cong {}^t W$ for some $t \in G_{\one}$. If $V$ is a $\CG$-module, we write $\Res_H^G(V)$ for the $\CH$-module obtained by restriction, and if $U$ is a $\CH$-module, we denote the induced $\CG$-module $\CG \otimes_{\CH} U$ by $\Ind_H^G(U)$.

\begin{proposition}[{\cite[Proposition 5.1]{Fulton:1991}}] \label{prop:index-2-restriction}
Let $V$ be a simple $\CG$-module. Then exactly one of the following holds:
	\begin{enumerate}
	\item $V \not\cong V \otimes \sgn$ as $\CG$-modules, $\Res_H^G(V)$ is simple and isomorphic to its conjugate, and $\Ind_H^G( \Res_H^G(V) ) \cong V \oplus (V \otimes \sgn)$. \label{item:V-not-iso-Vsgn}
	\item $V \cong V \otimes \sgn$ as $\CG$-modules, $\Res_H^G(V) = U' \oplus U''$ for $\CH$-submodules $U'$ and $U''$ that are simple and conjugate but not isomorphic, and $\Ind_H^G(U') \cong \Ind_H^G(U'') \cong V$.
	\end{enumerate}
Each simple $\CH$-module arises uniquely in this way, noting that in case (\ref{item:V-not-iso-Vsgn}) the simple $\CG$-modules modules $V$ and $V \otimes \sgn$ each determine the same $\CH$-module.
\end{proposition}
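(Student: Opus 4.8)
The plan is to run the standard Clifford-theory argument for an index-two subgroup, using nothing beyond Frobenius reciprocity, the projection formula, Mackey's formula, and dimension counts. All of these are available because $\CG$ and $\CH$ are semisimple by Maschke's theorem: every module in sight is a direct sum of simple modules, and the endomorphism algebra of a semisimple module over $\C$ is a product of matrix algebras, so its dimension equals $\sum_i m_i^2$, where the $m_i$ are the multiplicities of its distinct simple summands.

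First I would record two facts valid for \emph{every} simple $\CG$-module $V$. By the projection formula $\Ind_H^G(\Res_H^G(V)) \cong V \otimes \Ind_H^G(\C)$, where $\C$ is the trivial $\CH$-module; and since $\Ind_H^G(\C) = \C[G/H] \cong \C \oplus \sgn$ as $\CG$-modules, this gives
	\[
	\Ind_H^G(\Res_H^G(V)) \cong V \oplus (V \otimes \sgn).
	\]
Also, the restriction of a $\CG$-module is conjugation-stable: the action of any $t \in G_{\one}$ on $V$ itself furnishes an $\CH$-isomorphism ${}^t(\Res_H^G(V)) \cong \Res_H^G(V)$. Combining the display with Frobenius reciprocity,
	\[
	\dim \End_H(\Res_H^G(V)) = \dim \Hom_G(V,\, V \oplus (V \otimes \sgn)) = 1 + \dim \Hom_G(V, V \otimes \sgn),
	\]
which is $1$ if $V \not\cong V \otimes \sgn$ and $2$ if $V \cong V \otimes \sgn$, both $V$ and $V \otimes \sgn$ being simple. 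This is the crux: since $2 = 1^2 + 1^2$ is its only expression as a sum of squares of positive integers, a semisimple module with a $1$-dimensional endomorphism algebra is simple, and one with a $2$-dimensional endomorphism algebra is a sum of two non-isomorphic simple modules. Together with conjugation-stability of $\Res_H^G(V)$, this already pins down the shape of $\Res_H^G(V)$ in both cases: in case (1) it is a single simple $U$ with $U \cong {}^tU$, and in case (2) it is $U' \oplus U''$ with $U' \not\cong U''$.

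It remains to identify the induced modules and to settle the last assertion. In case (1) the display reads $\Ind_H^G(\Res_H^G(V)) \cong V \oplus (V \otimes \sgn)$ with $V \not\cong V \otimes \sgn$, as required. In case (2) it reads $\Ind_H^G(U') \oplus \Ind_H^G(U'') \cong V \oplus V$, while Frobenius reciprocity shows $V$ occurs in each of $\Ind_H^G(U')$ and $\Ind_H^G(U'')$ with multiplicity one, forcing $\Ind_H^G(U') \cong \Ind_H^G(U'') \cong V$; Mackey's formula $\Res_H^G(\Ind_H^G(U')) \cong U' \oplus {}^t U'$ compared with $\Res_H^G(V) \cong U' \oplus U''$ then gives ${}^tU' \cong U''$, so $U'$ and $U''$ are conjugate but not isomorphic. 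That exactly one of (1), (2) holds is clear. Finally, given a simple $\CH$-module $W$, a simple summand $V$ of $\Ind_H^G(W) \neq 0$ has $W$ as a summand of $\Res_H^G(V)$ by Frobenius reciprocity, so $W$ does arise; and if $W$ is a summand of $\Res_H^G(V)$ then $V$ is a summand of $\Ind_H^G(W)$, which has dimension $2 \dim W$. Here one checks via Mackey's formula that either $\Res_H^G(V) \cong W$ --- exactly when $W \cong {}^tW$, i.e.\ $W$ lies in case (1) --- in which case $\Ind_H^G(W)$ has a $2$-dimensional endomorphism algebra and, being $-\otimes\sgn$-stable, is $V \oplus (V \otimes \sgn)$; or $\Res_H^G(V) \cong W \oplus {}^tW$, i.e.\ case (2), in which case $\dim V = 2 \dim W = \dim \Ind_H^G(W)$ so $V \cong \Ind_H^G(W)$ is determined by $W$.

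I expect the main obstacle to be bookkeeping rather than any single difficulty: in each case one must track which of $\Res$, $\Ind$, $\Res \circ \Ind$, $\Ind \circ \Res$ is being applied and compare dimensions in order to upgrade ``summand of'' to ``isomorphic to''. The only genuinely load-bearing computation is the endomorphism-algebra dimension count, which isolates the multiplicity-one phenomenon; the rest is formal.
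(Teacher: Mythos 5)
Your proof is correct. The paper does not prove this proposition at all---it simply cites \cite[Proposition 5.1]{Fulton:1991}---and your argument is the standard Clifford-theory proof for an index-two subgroup (projection formula, Frobenius reciprocity, the endomorphism-dimension count $1+\dim\Hom_G(V,V\otimes\sgn)$, and Mackey's formula), which is essentially the argument of the cited source; in particular your observation that a $2$-dimensional endomorphism algebra forces two \emph{non-isomorphic} simple summands (since $U\oplus U$ would give dimension $4$) is exactly the load-bearing step.
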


\begin{remark}
Given a $\CG$-(super)module $V$, it is immediate from the definitions that $V \otimes \sgn \cong \pi_{\CG}^*(V)$ as $\CG$-(super)modules. We emphasize however that the sign representation is \emph{not} a $\CG$-\emph{super}\-module, nor is the one-dimensional trivial $\CG$-module, though their direct sum is naturally a simple $G$-supermodule of Type Q.
\end{remark}

\subsection{Example: The group superalgebra of the dihedral group}

In this section, fix a positive integer $n \geq 3$ and let $D_n$ be the corresponding dihedral group of order $2n$. Write
	\[
	D_n = \subgrp{r,s : r^n = s^2 = (sr)^2 = 1} = \smallset{ 1, r, r^2, \ldots, r^{n-1}, s, sr, \ldots, sr^{n-1}}
	\]
and let $R_n = \smallset{ 1, r, r^2, \ldots, r^{n-1}}$ be the subgroup of rotations in $D_n$. Then $R_n$ is a normal subgroup of index $2$ in $D_n$, so $\CD_n$ is a superalgebra with $(\CD_n)_{\zero} = \CR_n$.

The irreducible complex representations of the group $D_n$ are given as follows:
	\begin{itemize}
	\item Let $\zeta = e^{2\pi i/n} \in \C$. Given an integer $k$, define $\rho_k: D_n \to GL_2(\C)$ by
		\begin{align*}
		\rho_k(r) &= \pmat{\zeta^k & 0 \\ 0 & \zeta^{-k}}, & \rho_k(s) &= \pmat{0 & 1 \\ 1 & 0}.
		\end{align*}
	These representations are irreducible and pairwise non-isomorphic provided that $1 \leq k < \frac{n}{2}$. Further, the representations $\rho_k$ and $\rho_{n-k}$ are isomorphic via conjugation by the matrix $\begin{psmallmatrix} 0 & 1 \\ 1 & 0 \end{psmallmatrix}$.

	\item The trivial representation $\rho_0: D_n \to GL_1(\C)$, defined by $\rho_0(r) = \pmat{1}$ and $\rho_0(s) = \pmat{1}$.

	\item The sign representation $\sgn : D_n \to GL_1(\C)$, defined by $\sgn(r) = \pmat{1}$ and $\sgn(s) = \pmat{-1}$.
	
	\item If $n$ is even, then there are two additional $1$-dimensional representations of $D_n$:
		\begin{itemize}
		\item $\rho_0^-: D_n \to GL_1(\C)$, defined by $\rho_0^-(r) = \pmat{-1}$ and $\rho_0^-(s) = \pmat{1}$.

		\item $\sgn^-: D_n \to GL_1(\C)$, defined by $\sgn^-(r) = \pmat{-1}$ and $\sgn^-(s) = \pmat{-1}$.
		\end{itemize}
	\end{itemize}

Now define subspaces of $\CD_n$ as follows:
	\begin{itemize}
	\item Given an integer $k$, let $\lambda = e^{2\pi i k/n} \in \C$, and let $V_k$ be the subspace of $\CD_n$ spanned by
		\[
		\sum_{i=0}^{n-1} \lambda^{-i} \cdot r^i \quad \text{and} \quad \sum_{j=0}^{n-1} \lambda^{-j} \cdot sr^j.
		\]

	\item Let $V_0$ be the subspace of $\CD_n$ spanned by
		\[
		\left( \sum_{i=0}^{n-1} r^i \right) + \left( \sum_{i=0}^{n-1} s r^i \right) \quad \text{and} \quad \left( \sum_{i=0}^{n-1} r^i \right) - \left( \sum_{i=0}^{n-1} s r^i \right).
		\]
	\end{itemize}

Then it is straightforward to check the following statements:
	\begin{itemize}
	\item For all integers $k$, $V_k$ is a subsuperspace of $\CD_n$, and
		\[
		\CD_n = \begin{cases}
		V_0 \oplus \left( \bigoplus_{1 \leq k < \frac{n}{2}} [V_k \oplus V_{n-k}] \right) \oplus V_{n/2} & \text{if $n$ is even,} \\
		V_0 \oplus \left( \bigoplus_{1 \leq k < \frac{n}{2}} [V_k \oplus V_{n-k}] \right) & \text{if $n$ is odd.}
		\end{cases}
		\]

	\item For each integer $1 \leq k < \frac{n}{2}$, $V_k$ and $V_{n-k}$ are Type M simple $\CD_n$-supermodules that afford the representations $\rho_k$ and $\rho_{n-k}$ of $D_{n}$, respectively.

	\item $V_0$ is a Type Q simple $\CD_n$-supermodule, whose restriction to $\abs{\CD_n}$ is $\rho_0 \oplus \sgn$.
	
	\item If $n$ is even, then $V_{n/2}$ is a Type Q simple $\CD_n$-supermodule, whose restriction to $\abs{\CD_n}$ is the direct sum of $\rho_0^-$ (spanned by $\sum_{i=0}^{n-1} (-1)^i r^i + \sum_{i=0}^{n-1} (-1)^i sr^i$) and $\sgn^-$ (spanned by $\sum_{i=0}^{n-1} (-1)^i r^i - \sum_{i=0}^{n-1} (-1)^i sr^i$).
	\end{itemize}

As a consequence of these observations and \cref{theorem:super-wedderburn}, we deduce the existence of a super\-algebra isomorphism
	\[
	\CD_n \cong \begin{cases}
	M(1|1)^{\oplus (\frac{n}{2} - 1)} \oplus Q(1)^{\oplus 2} & \text{if $n$ is even,} \\
	M(1|1)^{\oplus \floor{n/2}} \oplus Q(1) & \text{if $n$ is odd.}
	\end{cases}
	\]

\section{The symmetric group as a supergroup}\label{S:SnSupergroup}

In this section, fix an integer $n \geq 2$ and let $S_n$ be the symmetric group on $n$ letters. The sign representation $\sgn: S_n \to \set{\pm 1}$, $\sigma \mapsto (-1)^\sigma$, makes $S_n$ into a supergroup such that $(S_n)_{\zero} = A_n$, the alternating group on $n$ letters, and $(S_n)_{\one} = S_n \backslash A_n$ is the set of odd permutations. Then the group algebra $\CS_n$ becomes a superalgebra with $(\CS_n)_{\zero} = \CA_n$, the group algebra of $A_n$.

\subsection{The simple supermodules of the symmetric group} \label{subsec:group-algebra-Sn}

Write $\lambda \vdash n$ to denote that $\lambda$ is a partition of $n$, and let $\calP(n) = \set{ \lambda : \lambda \vdash n}$ be the set of all partitions of $n$. Given $\lambda \in \calP(n)$, write $\lambda'$ for the partition that is conjugate (or transpose) to $\lambda$, and let $\sim$ be the equivalence relation on $\calP(n)$ with equivalence classes $\left\{\left\{\lambda, \lambda' \right\} : \lambda \in \calP(n) \right\}$. 

\begin{definition}
Let $\Pbar(n)$ be any fixed set of representatives for the distinct equivalence classes in $\calP(n)$ under the relation $\sim$. Then $\Pbar(n)$ is a disjoint union of sets $E_n$ and $F_n$, where
	\[
	E_n = \smallset{ \lambda \in \Pbar(n) : \lambda \neq \lambda'} \quad \text{and} \quad F_n = \smallset{ \lambda \in \Pbar(n) : \lambda = \lambda'}.
	\]
\end{definition}

For $\lambda \vdash n$, let $S^\lambda$ be the corresponding Specht module. Then the set $\smallset{ S^\lambda : \lambda \vdash n}$ is a complete set of pairwise non-isomorphic simple $\CS_n$-modules. It is well-known that $S^\lambda \otimes \sgn \cong S^{\lambda'}$; see \cite[Theorems 4.12 and 6.7]{James:1978}. If $\lambda \neq \lambda'$, then \cref{prop:index-2-restriction} implies that $S^\lambda$ and $S^{\lambda'}$ are simple (and isomorphic) as $\CA_n$-modules, while for $\lambda = \lambda'$ one gets that $\Res_{A_n}^{S_n}(S^\lambda) = S^{\lambda^+} \oplus S^{\lambda^-}$ for two simple, conjugate, non-isomorphic $\CA_n$-modules $S^{\lambda^+}$ and $S^{\lambda^-}$. In particular, if $\tau \in S_n$ is any odd permutation, then multiplication by $\tau$ defines a linear isomorphism $S^{\lambda^+} \to S^{\lambda^-}$.

\begin{convention} \label{conv:(n)-in-En}
It will be convenient to assume that the partition $(n)$, corresponding to the trivial $\CS_n$-module $S^{(n)}$, is an element of $E_n$.
\end{convention}

Since $S^{\lambda'} \otimes \sgn \cong S^{\lambda}$, Schur's Lemma implies that $\Hom_{\CS_n}(S^\lambda,S^{\lambda'} \otimes \sgn) \cong \C$. For each $\lambda \vdash n$, choose a nonzero element $\phi^\lambda$ of this space, and interpret it as a linear isomorphism $\phi^\lambda : S^\lambda \to S^{\lambda'}$ such that
	\begin{equation} \label{eq:intertwinor}
	\phi^\lambda(\sigma \cdot v) = (-1)^{\sigma} \sigma \cdot \phi^\lambda(v) \quad \text{for all} \quad v \in S^\lambda \text{ and } \sigma \in S_n.
	\end{equation}
Then $\phi^{\lambda'} \circ \phi^\lambda \in \Hom_{\CS_n}(S^\lambda,S^\lambda) = \C \cdot \id_{S^\lambda}$. Rescaling our choice of $\phi^\lambda$ if necessary, we may assume that $\phi^{\lambda'} \circ \phi^\lambda = \id_{S^\lambda}$, and hence $\phi^{\lambda} \circ \phi^{\lambda'} = \id_{S^{\lambda'}}$, as well. Now for $\lambda = \lambda'$, we deduce that up to the rescaling $\phi^\lambda \mapsto -\phi^\lambda$, $\phi^\lambda$ is the unique self-inverse linear map satisfying \eqref{eq:intertwinor}, while for $\lambda \neq \lambda'$, we deduce that up to mutual rescalings of the form $(\phi^\lambda,\phi^{\lambda'}) \mapsto (c \cdot  \phi^\lambda, \frac{1}{c} \cdot \phi^{\lambda'})$, $\phi^\lambda$ and $\phi^{\lambda'}$ are the unique mutually-inverse linear maps each satisfying \eqref{eq:intertwinor}.

\begin{convention} \label{conv:associator-scaling}
For $\lambda = \lambda'$, we assume that the scaling of the map $\phi^\lambda: S^\lambda \to S^\lambda$ is fixed as in \cite[\S3]{Geetha:2018}. This ensures that whenever $(\mu,\lambda)$ is a self-conjugate cover in the sense of \cite[Definition 1]{Geetha:2018}, then $\phi^\lambda|_{S^\mu} = \phi^\mu$.
\end{convention}

Now for each symmetric partition $\lambda$, one has $(\phi^\lambda)^2 = \id_{S^\lambda}$, and hence $S^\lambda$ decomposes into $+1$ and $-1$ eigenspaces for $\phi^\lambda$. These eigenspaces are $A_n$-stable (because $\phi^\lambda$ is a $\CA_n$-homomorphism), and hence are $\CA_n$-submodules of $S^\lambda$. Moreover, neither eigenspace is equal to all of $S^\lambda$, since otherwise \eqref{eq:intertwinor} would imply for all $v \in S^\lambda$ that $\sigma.v = 0$ for all odd permutations (which is false). Combining these observations with those made two paragraphs ago, and using the uniqueness of isotypical components, one deduces that the $\pm 1$ eigenspaces of $\phi^\lambda$ are the simple $\CA_n$-constituents of $\Res_{A_n}^{S_n}(S^\lambda)$. We take $S^{\lambda^+}$ and $S^{\lambda^-}$ to be the $+1$ and $-1$ eigenspaces of $\phi^\lambda$, respectively.

\begin{lemma} \label{lemma:super-CSn-as-ordinary}
Let $n > 1$, and let $W$ be a simple $\CS_n$-supermodule.
	\begin{enumerate}
	\item If $W$ is of Type M, then $W \cong S^\lambda$ as a $\abs{\CS_n}$-module, for some symmetric partition $\lambda \vdash n$. Under this identification, the homogeneous subspaces of $W$ are $S^{\lambda^+}$ and $S^{\lambda^-}$.
	\item If $W$ is of Type Q, then $W \cong S^{\lambda} \oplus S^{\lambda'}$ as a $\abs{\CS_n}$-module, for some non-symmetric partition $\lambda \vdash n$. Under this identification, the homogeneous subspaces of $W$ are
		\[
		W_{\zero} = \smallset{ u+\phi^\lambda(u): u \in S^\lambda} \quad \text{and} \quad W_{\one} = \smallset{ u-\phi^\lambda(u): u \in S^\lambda}.
		\]
	\end{enumerate}
\end{lemma}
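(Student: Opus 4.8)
The plan is to deduce this lemma directly from the general structure theory for simple supermodules over a semisimple superalgebra, as developed in the preceding subsection, applied to the specific case $A = \CS_n$. Recall that $\abs{\CS_n}$ is semisimple with simple modules $\{S^\lambda : \lambda \vdash n\}$, and that the parity automorphism $\pi_{\CS_n}$ acts on $\abs{\CS_n}$-modules by twisting through $\sigma \mapsto (-1)^\sigma \sigma$; consequently $\pi_{\CS_n}^*(S^\lambda) \cong S^\lambda \otimes \sgn \cong S^{\lambda'}$. Thus the orbits of $\pi_{\CS_n}$ on the simple $\abs{\CS_n}$-modules are exactly the classes $\{S^\lambda, S^{\lambda'}\}$, with fixed points corresponding to the symmetric partitions $\lambda = \lambda'$. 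Since every simple $\CS_n$-supermodule $W$ satisfies $\rad(\abs{\CS_n}) = 0 \subseteq \Ann_{\CS_n}(W)$, the classification of simple supermodules is governed entirely by how these $\pi_{\CS_n}$-orbits package into supermodules.

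First I would handle Type M. By \cref{def:Type-MQ-simples}, $W$ is simple as an $\abs{\CS_n}$-module, so $W \cong S^\lambda$ for some $\lambda \vdash n$. The supermodule structure means $W$ is $\Z_2$-graded with $\CS_n$ acting by graded maps; in particular $\pi_W$ is an $\abs{\CS_n}$-isomorphism $W \to \pi_{\CS_n}^*(W)$, giving $S^\lambda \cong S^{\lambda'}$, hence $\lambda = \lambda'$. The homogeneous subspaces $W_{\zero}, W_{\one}$ are the $\pm 1$ eigenspaces of $\pi_W$, which is an $\abs{\CS_n}$-endomorphism of $S^\lambda$ that is not a scalar (otherwise the grading would be trivial, but then odd permutations would annihilate $W$, contradicting faithfulness of the $S_n$-action, or more directly $\pi_W = \pm\id$ forces $W$ concentrated in one degree and $G_{\one}.W = 0$). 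It squares to the identity, so it is a $\CA_n$-module endomorphism of $S^\lambda$ with eigenvalues $\pm 1$, realizing the decomposition of $\Res_{A_n}^{S_n}(S^\lambda)$ into its two simple constituents. To identify these with $S^{\lambda^+}$ and $S^{\lambda^-}$ as defined via $\phi^\lambda$, I would observe that $\pi_W$ satisfies $\pi_W(\sigma.w) = (-1)^\sigma \sigma.\pi_W(w)$, which is exactly the defining relation \eqref{eq:intertwinor} for a self-inverse intertwinor $S^\lambda \to S^\lambda$; by the uniqueness (up to sign) noted in the text, $\pi_W = \pm\phi^\lambda$, and in either case the $\pm1$ eigenspaces of $\pi_W$ coincide as a set with $\{S^{\lambda^+}, S^{\lambda^-}\}$. (A sign ambiguity here only swaps the labels, which is harmless, or can be pinned down by \cref{conv:associator-scaling} if needed.)

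Next I would handle Type Q. By \cref{lemma:self-associate} applied to $A = \CS_n$, if $W$ is Type Q and $U$ is a proper simple $\abs{\CS_n}$-submodule, then $W = U \oplus \pi_W(U)$ as $\abs{\CS_n}$-modules with $U \not\cong \pi_W(U)$, and $\pi_W(U) \cong \pi_{\CS_n}^*(U)$. Writing $U \cong S^\lambda$, we get $\pi_W(U) \cong S^{\lambda'}$ with $S^\lambda \not\cong S^{\lambda'}$, i.e.\ $\lambda \neq \lambda'$; thus $W \cong S^\lambda \oplus S^{\lambda'}$ as claimed. The homogeneous subspaces, again by \cref{lemma:self-associate}, are $W_{\zero} = \{u + \pi_W(u) : u \in U\}$ and $W_{\one} = \{u - \pi_W(u) : u \in U\}$. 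To rewrite these in the stated form with $\phi^\lambda$, I would identify $\pi_W|_U : U \to \pi_W(U)$, transported to a map $S^\lambda \to S^{\lambda'}$, as an odd $\abs{\CS_n}$-twisted intertwinor satisfying \eqref{eq:intertwinor}; by the uniqueness statement for $\phi^\lambda$ (up to the mutual rescaling $(\phi^\lambda,\phi^{\lambda'}) \mapsto (c\phi^\lambda, c^{-1}\phi^{\lambda'})$), after fixing the identification $U = S^\lambda$ appropriately we may take the map to be $\phi^\lambda$ itself, yielding $W_{\zero} = \{u + \phi^\lambda(u) : u \in S^\lambda\}$ and $W_{\one} = \{u - \phi^\lambda(u) : u \in S^\lambda\}$.

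The main obstacle is not any single deep step but rather the careful bookkeeping of normalizations: matching the intrinsic parity map $\pi_W$ (and its restriction to a proper simple summand) against the externally chosen intertwinors $\phi^\lambda$, which are themselves only defined up to scalars subject to the constraints fixed in Conventions \ref{conv:associator-scaling} and the surrounding discussion. One must verify that after choosing the $\abs{\CS_n}$-isomorphism $W \cong S^\lambda$ (resp.\ $W \cong S^\lambda \oplus S^{\lambda'}$) compatibly with these normalizations, the parity map becomes exactly $\phi^\lambda$ (up to an inconsequential overall sign in the Type M case). Everything else is a direct transcription of \cref{lemma:self-associate}, \cref{lemma:complete-irreducibles}, and the super Schur Lemma into the language of Specht modules.
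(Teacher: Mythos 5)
Your proposal is correct and takes essentially the same route as the paper: the Type Q case is handled exactly as in the paper's proof, via \cref{lemma:self-associate} together with the uniqueness of $\phi^\lambda$ up to mutual rescaling, while in Type M your identification $\pi_W = \pm\phi^\lambda$ is a mild repackaging of the paper's appeal to the uniqueness of the $\CA_n$-isotypic decomposition $S^\lambda = S^{\lambda^+} \oplus S^{\lambda^-}$ (both rest on the same discussion preceding the lemma). The only slip is the phrase calling $\pi_W$ an $\abs{\CS_n}$-endomorphism of $S^\lambda$ --- it is only a $\CA_n$-endomorphism, satisfying the twisted relation \eqref{eq:intertwinor} --- but since you immediately invoke exactly that corrected property, the argument is unaffected.
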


\begin{proof}
First suppose $W$ is of Type M. Then as a $\abs{\CS_n}$-module, $W \cong S^\lambda$ for some $\lambda \vdash n$. Since
	\[
	W = \pi_W(W) \cong \pi_{\CS_n}^*(W) \cong W \otimes \sgn \cong S^\lambda \otimes \sgn \cong S^{\lambda'}
	\]
as $\abs{\CS_n}$-modules, this implies that $\lambda = \lambda'$. Next, since the odd permutations in $S_n$ do not annihilate $S^\lambda$,  $W$ cannot be simply a purely even or a purely odd superspace. Then $W_{\zero}$ and $W_{\one}$ are nonzero $\CA_n$-submodules of $W$. Since $S^\lambda = S^{\lambda^+} \oplus S^{\lambda^-}$ as a $\CA_n$-module, the uniqueness of isotypical components implies that, as sets, $\smallset{S^{\lambda^+},S^{\lambda^-}} = \smallset{W_{\zero},W_{\one}}$.

Now suppose $W$ is of Type Q. Then by \cref{lemma:self-associate}, there exists $\lambda \vdash n$ such that, as a $\abs{\CS_n}$-module,
	\[
	W = S^{\lambda} \oplus \pi_W(S^\lambda) \cong S^{\lambda} \oplus \pi_{\CS_n}^*(S^\lambda) \cong S^{\lambda} \oplus S^{\lambda'},
	\]
and $S^{\lambda} \not\cong S^{\lambda'}$ as $\abs{\CS_n}$-modules. Then $\lambda \neq \lambda'$. Making the identification $\pi_W(S^\lambda) = S^{\lambda'}$, the parity map $\pi = \pi_W: W \to W$ restricts to mutually-inverse linear maps $\pi^\lambda : S^\lambda \to S^{\lambda'}$ and $\pi^{\lambda'}: S^{\lambda'} \to S^\lambda$ satisfying \eqref{eq:intertwinor}. Then by uniqueness (up to mutual rescaling) of $\phi^\lambda$ and $\phi^{\lambda'}$, we may assume that $\pi^{\lambda} = \phi^\lambda$ and $\pi^{\lambda'} = \phi^{\lambda'}$. Now the identification of $W_{\zero}$ and $W_{\one}$ follows from \cref{lemma:self-associate}.
\end{proof}

Note that dfferent choices for $\phi^{\lambda}$ would lead to different homogeneous subspaces of $S^{\lambda} \oplus S^{\lambda'}$ in the type $Q$ case above, but the description of $W$ would be the same up to an isomorphism.

\begin{proposition} \label{prop:Wlambda-decomposition}
Let $n > 1$.
\begin{enumerate}
\item \label{item:Wlambda-En} For each $\lambda \in E_n$, there exists a Type Q simple $\CS_n$-supermodule $W^\lambda$ such that $W^\lambda \cong S^\lambda \oplus S^{\lambda'}$ as a $\abs{\CS_n}$-module, with
	\[
	W^\lambda_{\zero} = \smallset{ u+\phi^\lambda(u) : u \in S^\lambda} \quad \text{and} \quad W^\lambda_{\one} = \smallset{ u-\phi^\lambda(u) : u \in S^\lambda}.
	\]
The $\abs{\CS_n}$-module decomposition $W^\lambda \cong S^{\lambda} \oplus S^{\lambda'}$ is canonical. We denote by $J^\lambda: W^{\lambda} \to W^{\lambda}$ the odd involution defined for $u \in S^\lambda$ by $J^\lambda(u \pm \phi^\lambda(u)) = u \mp \phi^\lambda(u)$.

\item \label{item:Wlambda-Fn} For each $\lambda \in F_n$, there exists a Type M simple $\CS_n$-supermodule such that $W^\lambda \cong S^\lambda$ as a $\abs{\CS_n}$-module, with $W^\lambda_{\zero} = S^{\lambda^+}$ and $W^\lambda_{\one} = S^{\lambda^-}$.
\end{enumerate}
The set $\smallset{ W^\lambda : \lambda \in \Pbar(n)}$ is a complete set of pairwise non-isomorphic simple $\CS_n$-super\-modules.
\end{proposition}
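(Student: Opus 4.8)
The plan is to assemble the three assertions from the general theory developed in Sections~\ref{S:Prelims} and the representation theory of $\CS_n$ recalled in Section~\ref{subsec:group-algebra-Sn}. The essential input is the classification of simple $\abs{\CS_n}$-modules (the Specht modules $S^\lambda$, $\lambda \vdash n$), the twisting relation $S^\lambda \otimes \sgn \cong S^{\lambda'}$, and \cref{prop:index-2-restriction} applied to the index-$2$ subgroup $A_n \trianglelefteq S_n$. First I would invoke \cref{lemma:complete-irreducibles}: a complete, irredundant list of simple $\CS_n$-supermodules $\set{V_1,\dots,V_n}$, split as Types M and Q, produces a complete list of simple $\abs{\CS_n}$-modules by replacing each Type Q module $V_i$ with its two $\abs{\CS_n}$-constituents $V_i^\pm$. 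Comparing this with the known list $\set{S^\lambda : \lambda \vdash n}$ and with \cref{lemma:super-CSn-as-ordinary} forces the correspondence: each Type M simple supermodule restricts to some $S^\lambda$ with $\lambda = \lambda'$, each Type Q simple supermodule restricts to $S^\lambda \oplus S^{\lambda'}$ with $\lambda \neq \lambda'$, and these exhaust all of $\calP(n)$ under the relation $\sim$, i.e., are indexed bijectively by $\Pbar(n) = E_n \sqcup F_n$.

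Next I would construct the $W^\lambda$ explicitly rather than merely asserting existence, so that the stated homogeneous decompositions hold on the nose. For $\lambda \in F_n$ (so $\lambda = \lambda'$): equip $S^\lambda$ with the $\Z_2$-grading $S^\lambda_{\zero} = S^{\lambda^+}$, $S^\lambda_{\one} = S^{\lambda^-}$, the $\pm 1$ eigenspaces of $\phi^\lambda$; relation \eqref{eq:intertwinor} says precisely that an odd permutation $\sigma$ swaps the $\pm 1$ eigenspaces, i.e., is an odd operator, so this grading is compatible with the $S_n$-grading and makes $S^\lambda$ into a $\CS_n$-supermodule $W^\lambda$. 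It is simple as a supermodule since it is already simple as an $\abs{\CS_n}$-module, hence of Type M. For $\lambda \in E_n$ (so $\lambda \neq \lambda'$): form $W^\lambda = S^\lambda \oplus S^{\lambda'}$ as an $\abs{\CS_n}$-module and define its grading by $W^\lambda_{\zero} = \set{u + \phi^\lambda(u) : u \in S^\lambda}$, $W^\lambda_{\one} = \set{u - \phi^\lambda(u) : u \in S^\lambda}$; using $\phi^{\lambda'} \circ \phi^\lambda = \id$ and \eqref{eq:intertwinor}, one checks these two subspaces are complementary and that the $S_n$-action respects the grading (an odd $\sigma$ sends $u + \phi^\lambda(u)$ to $\sigma.u - \phi^\lambda(\sigma.u) \in W^\lambda_{\one}$ by \eqref{eq:intertwinor}). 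Because $S^\lambda \not\cong S^{\lambda'}$ as $\abs{\CS_n}$-modules and the decomposition into isotypic components is canonical, $W^\lambda$ has no proper graded submodule, so it is a simple supermodule of Type Q; the odd involution $J^\lambda$ and its properties come directly from \cref{lemma:self-associate} with $A = \CS_n$, $U = S^\lambda$, $\pi_W = \phi^\lambda$.

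Finally I would verify the completeness and non-redundancy of $\set{W^\lambda : \lambda \in \Pbar(n)}$. Non-isomorphism: for $\lambda, \mu \in \Pbar(n)$ distinct, the underlying $\abs{\CS_n}$-modules are distinct (they are $S^\mu$, resp.\ $S^\mu \oplus S^{\mu'}$, for distinct $\sim$-classes), so the $W$'s are not even $\abs{\CS_n}$-isomorphic, a fortiori not homogeneously isomorphic; within a single class one cannot have $W^\lambda \cong \Pi(W^\lambda)$ reducing the count, which is handled by the Super Schur Lemma / the fact that parity shift is accounted for. Completeness: every simple $\abs{\CS_n}$-module $S^\nu$ appears either as the underlying module of some $W^\lambda$ with $\lambda \in F_n$ (if $\nu = \nu'$) or as a summand of some $W^\lambda$ with $\lambda \in E_n$ (if $\nu \neq \nu'$), so by \cref{lemma:complete-irreducibles} the list $\set{W^\lambda}$ is complete. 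I expect the main obstacle to be purely bookkeeping: keeping straight which rescaling convention on $\phi^\lambda$ (\cref{conv:associator-scaling}) is being used and checking that the asserted homogeneous subspaces are independent of that choice up to isomorphism (as noted in the remark following \cref{lemma:super-CSn-as-ordinary}), together with carefully matching the abstract Type M/Q dichotomy of \cref{lemma:super-CSn-as-ordinary} to the symmetric/non-symmetric dichotomy on partitions. None of this is deep, but it requires care to present cleanly.
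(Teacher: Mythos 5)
Your proposal is correct and follows essentially the same route as the paper: both arguments rest on \cref{lemma:complete-irreducibles}, \cref{lemma:super-CSn-as-ordinary}, \cref{lemma:self-associate}, and the intertwiner $\phi^\lambda$, with pairwise non-isomorphism and completeness read off from the ungraded classification of Specht modules. The only difference is one of presentation: where the paper deduces the existence of each $W^\lambda$ abstractly from the semisimple structure theory, you build the gradings on $S^\lambda$ and $S^\lambda \oplus S^{\lambda'}$ explicitly and check compatibility via \eqref{eq:intertwinor}, which is exactly the content already packaged into \cref{lemma:super-CSn-as-ordinary}.
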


\begin{proof}
Using \cref{lemma:complete-irreducibles}, \cref{lemma:super-CSn-as-ordinary}, and the classification of the simple $\abs{\CS_n}$-modules, one deduces for each $\lambda \in \Pbar(n)$ that there exists a simple $\CS_n$-supermodule $W^\lambda$ with the given restriction to $\abs{\CS_n}$. In particular, if $\lambda \in E_n$, and if $W$ and $W'$ are Type Q simple $\CS_n$-super\-modules that are both isomorphic as $\abs{\CS_n}$-modules to $S^\lambda \oplus S^{\lambda'}$, then $W \cong W'$, so the notation $W^\lambda$ does not depend on the choice of representative for the equivalence class $\set{\lambda,\lambda'}$.

For $\lambda \in E_n$, the decomposition $W^\lambda \cong S^\lambda \oplus S^{\lambda'}$ is canonical by the uniqueness of isotypical com\-pon\-ents and the fact that $S^\lambda \not\cong S^{\lambda'}$ as $\abs{\CS_n}$-modules. For $\lambda \in F_n$, one can if needed replace $W^\lambda$ with its parity shift (to which $W^\lambda$ is odd-isomorphic) to ensure that $W^\lambda_{\zero} = S^{\lambda^+}$ and $W^\lambda_{\one} = S^{\lambda^-}$.
\end{proof}

If $G$ is a (super)group and if $V$ is a $\CG$-(super)module, then the linear dual $V^* = \Hom(V,\C)$ admits a $\CG$-(super)module structure, defined for $g \in G$, $\phi \in V^*$, and $v \in V$ by $(g.\phi)(v) = \phi(g^{-1}.v)$. We denote this group-theoretic module structure on $V^*$ by $V^{*,\Grp}$ when it is necessary to distinguish it from the Lie-algebraic module structure on $V^*$ that we consider later in \cref{prop:An-Lie-dual}.

The next remark considers the group-theoretic duals of the simple supermodules $W^\lambda$.

\begin{remark} \label{remark:self-dual}
For each $\lambda \in \calP(n)$, the Specht module $S^\lambda$ is self-dual \cite[Theorem 4.12]{James:1981}. From this and \cref{prop:Wlambda-decomposition}, it follows for each $\lambda \in \calP(n)$ that $(W^\lambda)^*$ is isomorphic (via an even isomorphism) to either $W^{\lambda}$ or $\Pi(W^\lambda)$. For $\lambda \in E_n$, one always has $(W^\lambda)^* \cong W^\lambda$ because $W^\lambda$ is even-isomorphic to $\Pi(W^\lambda)$, while for $\lambda \in F_n$ one has $(W^\lambda)^* \cong W^\lambda$ if and only if the $\CA_n$-modules $S^{\lambda^+}$ and $S^{\lambda^-}$ are each self-dual. The $\CA_n$-modules $S^{\lambda^+}$ and $S^{\lambda^-}$ are self-dual if and only if their complex characters are real-valued. By \cite[Proposition 5.3]{Fulton:1991}, this happens if and only if the number of squares above the diagonal in the Young diagram of $\lambda$ is even.
\end{remark}

The next result is an immediate consequence of \cite[Theorem 2.4.10]{James:1981}.

\begin{lemma} \label{lemma:lower-bound-dim}
Let $n \geq 2$, and let $\lambda \in \Pbar(n)$.
\begin{enumerate}
\item Suppose $\lambda \in E_n$. If $\lambda = (n)$ or $\lambda = (1^n)$, then $\dim(W^\lambda) = 2$. Otherwise, $\dim(W^\lambda) \geq 2n-2$.

\item Suppose $\lambda \in F_n$. If $n = 3$ and $\lambda = (2,1)$, or if $n = 4$ and $\lambda = (2,2)$, then $\dim(W^\lambda) = 2$. If $n = 5$ and $\lambda = (3,1,1)$, then $\dim(W^\lambda) = 6 = n+1$. Otherwise, $\dim(W^\lambda) \geq n+3$.
\end{enumerate}
\end{lemma}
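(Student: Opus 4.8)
The plan is to reduce the statement to known lower bounds on the Specht‑module dimensions $f^\lambda = \dim(S^\lambda)$, using the $\abs{\CS_n}$‑module description of $W^\lambda$ recorded in \cref{prop:Wlambda-decomposition}. For $\lambda \in F_n$ one has $W^\lambda \cong S^\lambda$ as an $\abs{\CS_n}$‑module, so $\dim(W^\lambda) = f^\lambda$; for $\lambda \in E_n$ one has $W^\lambda \cong S^\lambda \oplus S^{\lambda'}$, and since conjugation of partitions preserves Specht‑module dimensions this gives $\dim(W^\lambda) = f^\lambda + f^{\lambda'} = 2f^\lambda$. So part (1) is equivalent to the two assertions $f^{(n)} = f^{(1^n)} = 1$ and $f^\lambda \geq n-1$ for every other $\lambda \in E_n$, while part (2) is equivalent to a lower bound on $f^\lambda$ as $\lambda$ runs over the self‑conjugate partitions of $n$.

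For part (1) I would first note that $S^{(n)}$ and $S^{(1^n)}$ are the trivial and sign modules, so $\dim(W^{(n)}) = 2$ (here $(n)$, and hence also $(1^n) = (n)'$, is represented in $E_n$ by \cref{conv:(n)-in-En}). For any other $\lambda \in E_n$, the classical bound $f^\lambda \geq n-1$ --- which is part of the cited \cite[Theorem 2.4.10]{James:1981} --- yields $\dim(W^\lambda) = 2f^\lambda \geq 2n-2$, with equality when $\lambda = (n-1,1)$. When $n \in \{2,3\}$ one has $E_n = \{(n)\}$, so nothing further is needed there.

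For part (2), if $\lambda = \lambda'$ then $\lambda$ lies outside the short list of exceptional low‑dimensional shapes classified by \cite[Theorem 2.4.10]{James:1981} once $n$ is large enough: the only self‑conjugate partitions occurring on that list are $(2,1)$ for $n=3$, $(2,2)$ for $n=4$, and $(3,1,1)$ for $n=5$, so for $n \geq 6$ one reads off $f^\lambda \geq n+3$ directly. It then remains to dispatch $n \in \{2,3,4,5\}$ by inspection: $F_2 = \emptyset$; $F_3 = \{(2,1)\}$ with $f^{(2,1)} = 2$; $F_4 = \{(2,2)\}$ with $f^{(2,2)} = 2$; and $F_5 = \{(3,1,1)\}$ with $f^{(3,1,1)} = 6 = n+1$, each by a one‑line hook‑length calculation. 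These are exactly the exceptional cases recorded in the statement.

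No new idea is required beyond \cref{prop:Wlambda-decomposition} and the cited dimension estimates, so I expect the main obstacle to be purely bookkeeping: correctly extracting from \cite[Theorem 2.4.10]{James:1981} which partitions of small $n$ give Specht modules below the relevant thresholds $n-1$, $2n-2$, and $n+3$, checking the handful of small values of $n$ by hand, and confirming that the genuinely exceptional self‑conjugate shapes are precisely $(2,1)$, $(2,2)$, and $(3,1,1)$.
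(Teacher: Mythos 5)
Your proposal is correct and is essentially the paper's argument: the paper simply declares the lemma ``an immediate consequence of [James--Kerber, Theorem 2.4.10],'' and your write-up just makes explicit the reduction $\dim(W^\lambda)=f^\lambda$ (for $\lambda\in F_n$) or $2f^\lambda$ (for $\lambda\in E_n$) via \cref{prop:Wlambda-decomposition}, followed by the same citation and the hand-check of the small self-conjugate exceptions $(2,1)$, $(2,2)$, $(3,1,1)$. No gap; you have only supplied bookkeeping the paper leaves to the reader.
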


Recall that we write $\End(V)$ for $\End_{\C}(V)$. Given a partition $\lambda \vdash n$ and a permutation $\sigma \in S_n$, let $S^\lambda(\sigma) \in \End(S^\lambda)$ and $W^\lambda(\sigma) \in \End(W^\lambda)$ denote the corresponding linear maps $u \mapsto \sigma . u$. For $\sigma \in A_n$, let $S^{\lambda^+}(\sigma) \in \End(S^{\lambda^+})$ and $S^{\lambda^-}(\sigma) \in \End(S^{\lambda^-})$ be defined similarly. By abuse of notation, we will also write $S^\lambda(\sigma)$, $W^\lambda(\sigma)$, etc., for the corresponding matrices when bases for the underlying modules are fixed, and we extend the notation $S^\lambda(\sigma)$ to arbitrary elements $\sigma \in \CS_n$ by linearity.

The next result is an immediate consequence of \cref{theorem:super-wedderburn} and \cref{prop:Wlambda-decomposition}.

\begin{corollary} \label{cor:CSn-as-superalgebra}
Let $n \geq 2$. The map $\CS_n \to \bigoplus_{\lambda \in \Pbar(n)} \End(W^\lambda)$, $\sigma \mapsto \bigoplus_{\lambda \in \Pbar(n)} W^\lambda(\sigma)$, induces a superalgebra isomorphism
		\begin{align*}
		\CS_n &\cong \Bigg[ \bigoplus_{\lambda \in E_n} Q\left( W^\lambda\right) \Bigg] \oplus \Bigg[ \bigoplus_{\lambda \in F_n} \End\left( W^\lambda\right) \Bigg] \cong \Bigg[ \bigoplus_{\lambda \in E_n} Q\left( f^\lambda\right) \Bigg] \oplus \Bigg[ \bigoplus_{\lambda \in F_n} M\left( \tfrac{1}{2}f^\lambda, \tfrac{1}{2}f^\lambda\right) \Bigg],
		\end{align*}
where $f^\lambda = \dim(S^\lambda)$.
\end{corollary}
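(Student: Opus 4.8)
The plan is to unwind the Super Artin--Wedderburn Theorem (\cref{theorem:super-wedderburn}) using the classification of the simple $\CS_n$-supermodules from \cref{prop:Wlambda-decomposition}. First recall, as observed at the start of \cref{S:SnSupergroup} (via Maschke's Theorem together with \cref{lemma:ss-super-ordinary}), that $\CS_n$ is semisimple as a superalgebra, so the equivalent conditions of \cref{theorem:super-wedderburn} hold. By \cref{prop:Wlambda-decomposition}, the set $\smallset{W^\lambda : \lambda \in \Pbar(n)}$ is a complete, irredundant set of simple $\CS_n$-supermodules up to homogeneous isomorphism, those indexed by $\lambda \in F_n$ being of Type M and those indexed by $\lambda \in E_n$ being of Type Q. Part (3) of \cref{theorem:super-wedderburn} then says precisely that the natural evaluation map $\CS_n \to \bigoplus_{\lambda \in \Pbar(n)} \End(W^\lambda)$, $\sigma \mapsto \bigoplus_\lambda W^\lambda(\sigma)$, induces a superalgebra isomorphism onto $\big[ \bigoplus_{\lambda \in F_n} \End(W^\lambda) \big] \oplus \big[ \bigoplus_{\lambda \in E_n} Q(W^\lambda) \big]$, where each Type Q summand $Q(W^\lambda)$ is understood as $Q(W^\lambda, J^\lambda)$ in the sense of \cref{rem:J_V-understood}, with $J^\lambda$ the odd involution from \cref{prop:Wlambda-decomposition}. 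This yields the first isomorphism.

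For the second displayed isomorphism I would pass to matrix realizations using Examples \ref{ex:M(m|n)} and \ref{ex:Q(n)}. If $\lambda \in E_n$, then $W^\lambda \cong S^\lambda \oplus S^{\lambda'}$ as an $\abs{\CS_n}$-module with $\dim_\C(S^\lambda) = \dim_\C(S^{\lambda'}) = f^\lambda$; choosing a basis $v_1,\dots,v_{f^\lambda}$ of $W^\lambda_{\zero}$ and setting $v_i' = J^\lambda(v_i)$ gives a homogeneous basis realizing $W^\lambda \cong \C^{f^\lambda | f^\lambda}$ and identifying $Q(W^\lambda)$ with $Q(f^\lambda)$ exactly as in \cref{ex:Q(n)}. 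If $\lambda \in F_n$, then $W^\lambda \cong S^\lambda$ as an $\abs{\CS_n}$-module with $W^\lambda_{\zero} = S^{\lambda^+}$ and $W^\lambda_{\one} = S^{\lambda^-}$; since multiplication by an odd permutation is a linear isomorphism $S^{\lambda^+} \to S^{\lambda^-}$, these two spaces share the common dimension $\tfrac12 f^\lambda$ (in particular $f^\lambda$ is even), so a homogeneous basis identifies $\End(W^\lambda)$ with $M(\tfrac12 f^\lambda, \tfrac12 f^\lambda)$. Taking the direct sum of these identifications over $\lambda \in \Pbar(n)$ gives the claim.

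There is essentially no obstacle: the only two points needing a word of care are (i) recording that the isomorphism produced by \cref{theorem:super-wedderburn}(3) is literally the evaluation map $\sigma \mapsto \bigoplus_\lambda W^\lambda(\sigma)$, which is how that theorem is phrased, and (ii) observing that $f^\lambda$ is even when $\lambda = \lambda'$ so that $\tfrac12 f^\lambda \in \N$. Both are immediate from material already in hand, so the corollary really is a direct consequence of \cref{theorem:super-wedderburn} and \cref{prop:Wlambda-decomposition}.
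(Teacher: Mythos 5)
Your proposal is correct and follows exactly the route the paper intends: the paper presents this corollary as an immediate consequence of \cref{theorem:super-wedderburn} and \cref{prop:Wlambda-decomposition}, which is precisely your argument, with the matrix identifications via Examples \ref{ex:M(m|n)} and \ref{ex:Q(n)} being the same routine bookkeeping. Nothing is missing.
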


Let $\lambda \in E_n$. For $u \in S^\lambda$, the expression $u \pm \phi^\lambda(u) \in W^\lambda$ is linear in $u$, and one has
	\begin{equation} \label{eq:sigma-on-homogeneous}
	\sigma . \big( u \pm \phi^\lambda(u) \big) = (\sigma . u) \pm (-1)^\sigma \phi^\lambda(\sigma . u)
	\end{equation}
for all $\sigma \in S_n$. Then making the identification $Q(W^\lambda) = Q(f^\lambda)$ via a choice of homo\-geneous basis as in \cref{lemma:self-associate}, the identity \eqref{eq:sigma-on-homogeneous} implies that
	\begin{equation} \label{eq:En-matrices}
	W^\lambda(\sigma) = \begin{cases}
	\left[ \begin{array}{c|c} S^\lambda(\sigma) & 0 \\ \hline 0 & S^\lambda(\sigma) \end{array} \right] & \text{if $\sigma$ is an even permutation,} \\
	\left[ \begin{array}{c|c} 0 & S^\lambda(\sigma) \\ \hline S^\lambda(\sigma) & 0 \end{array} \right] & \text{if $\sigma$ is an odd permutation.}
	\end{cases}
	\end{equation}

On the other hand, let $\lambda \in F_n$. Choose a basis $\set{u_1,\ldots,u_m}$ for $W^\lambda_{\zero} = S^{\lambda^+}$, and let $\tau \in S_n$ be a fixed odd permutation. Then $\set{\tau.u_1,\ldots,\tau.u_m}$ is a basis for $W^\lambda_{\one} = S^{\lambda^-}$. Now identifying $\End(W^\lambda)$ with $M(\frac{1}{2}f^\lambda,\frac{1}{2}f^\lambda)$ via this choice of homogeneous basis, one gets
	\begin{equation} \label{eq:Fn-matrices}
	W^\lambda(\sigma) = \begin{cases}
	\left[ \begin{array}{c|c} S^{\lambda^+}(\sigma) & 0 \\ \hline 0 & S^{\lambda^+}(\tau^{-1}\sigma \tau) \end{array} \right] & \text{if $\sigma$ is an even permutation,} \\
	\left[ \begin{array}{c|c} 0 & S^{\lambda^+}(\sigma \tau) \\ \hline S^{\lambda^+}(\tau^{-1}\sigma) & 0 \end{array} \right] & \text{if $\sigma$ is an odd permutation.}
	\end{cases}
	\end{equation}

\subsection{Weight space decompositions of Specht modules} \label{subsec:weight-space-Sn-modules}

Our main references for this section are \cite[\S2]{Kleshchev:2005} and \cite[\S3]{Ceccherini-Silberstein:2010}. Recall that the Jucys--Murphy elements $L_1,\ldots,L_n \in \CS_n$ are defined by $L_j = \sum_{1 \leq i < j} (i,j)$. In particular, $L_1 = 0$. The elements $L_1,\ldots,L_n$ generate a commutative, semisimple sub\-algebra of $\CS_n$. Since this subalgebra is semi\-simple, each finite-dimensional $\CS_n$-module $V$ decomposes into a direct sum of simultaneous eigenspaces for $L_1,\ldots,L_n$. Given $\alpha = (\alpha_1,\ldots,\alpha_n) \in \C^n$, the $\alpha$-weight space of $V$ is defined by
	\[
	V_{\alpha} = \set{ v \in V : L_i \cdot v = \alpha_i  v \text{ for all } 1 \leq i \leq n}.
	\]
Given $\alpha = (\alpha_1,\ldots,\alpha_n)$, we may write $\alpha(L_i) = \alpha_i$. The nonzero elements of $V_\alpha$ are called \emph{weight vectors}. If $V_\alpha \neq 0$, then we say that $\alpha$ is a weight of $V$. Let
	\[
	\calW(\lambda) = \smallset{ \alpha \in \C^n : \text{$\alpha$ is a weight of $S^\lambda$}},
	\]
and let $\calW(n) = \bigcup_{\lambda \vdash n} \calW(\lambda)$.

Given a partition $\lambda = (\lambda_1 \geq \lambda_2 \geq \cdots)$ of $n$, we draw the Young diagram of shape $\lambda$ via the so-called ``English'' convention (see \cite{wiki-Young-tableau}), as an array of boxes with $\lambda_i$ boxes in the $i$-th row, the rows of boxes lined up on the left. A box in the $i$-th row and $j$-th column of the diagram is said to have \emph{residue} $j-i$. A $\lambda$-tableau is a Young diagram of shape $\lambda$ in which the boxes have been filled in some order with the integers $1,2,\ldots,n$. A \emph{standard} $\lambda$-tableau is a $\lambda$-tableau in which the values of the integers increase from top to bottom along columns, and from left to right along rows.

Let $\T(\lambda)$ be the set of all standard $\lambda$-tableaux. The nonzero weight spaces of the simple $\CS_n$-module $S^\lambda$ are each one-dimen\-sional, spanned by vectors $v_T$ for $T \in \T(\lambda)$. Given $T \in \T(\lambda)$ and an integer $1 \leq i \leq n$, let $T_i$ be the box in $T$ that is occupied by $i$, and let $\res(T_i)$ be the residue of the box $T_i$. Then $v_T$ is of weight
	\[
	\alpha(T) := (\res(T_1),\ldots,\res(T_n)).
	\]
In particular, $\calW(\lambda) \subseteq \Z^n$. For example, if $n = 7$, $\lambda = (4,2,1)$, and
	\[
	T = \begin{ytableau}
	1 & 2 & 4 & 5 \\
	3 & 7 \\
	6
	\end{ytableau} \; ,
	\]
then $\alpha(T) = (0,1,-1,2,3,-2,0)$. This description implies that the union $\calW(n) = \bigcup_{\lambda \vdash n} \calW(\lambda)$ is disjoint, and for $\alpha \in \calW(n)$ one has $\alpha = -\alpha$ only if $n = 1$, which we have excluded by assumption.

We may denote a weight vector in $S^\lambda$ by $v_T$, for a standard $\lambda$-tableau $T$, or by $v_\alpha$, where $\alpha = \alpha(T)$ is the corresponding weight. Conversely, if  $\alpha \in \calW(\lambda)$ is specified, let $T(\alpha)$ be the corresponding standard $\lambda$-tableau. Then $v_{\alpha(T)} = v_T$ for all $T \in \T(\lambda)$, and $v_{T(\alpha)} = v_\alpha$ for all $\alpha \in \calW(\lambda)$. Given a (standard) $\lambda$-tableau $T$, let $T'$ be its transpose, which is then a (standard) $\lambda'$-tableau. Then for all $T \in \T(\lambda)$, one has $\alpha(T') = - \alpha(T)$, and for all $\alpha \in \calW(\lambda)$, one has	$T(-\alpha) = T(\alpha)'$.

\begin{proposition}[{\cite[Corollary 2.2.3]{Kleshchev:2005}}] \label{prop:si-on-weights}
Let $\alpha \in \calW(\lambda)$. Given $1 \leq i < n$, let $s_i$ be the transposition $(i,i+1) \in S_n$, and let $\beta = s_i.\alpha = (\alpha_1,\ldots,\alpha_{i-1},\alpha_{i+1},\alpha_i,\alpha_{i+2},\ldots,\alpha_n)$. Then:
	\begin{enumerate}
	\item $\alpha_i \neq \alpha_{i+1}$.

	\item \label{item:alphai-pm-alphai+1} If $\alpha_{i+1} = \alpha_i \pm 1$, then $s_i \cdot v_\alpha = \pm v_\alpha$ and $\beta \notin \calW(\lambda)$.

	\item Suppose $\alpha_{i+1} \neq \alpha_i \pm 1$, and let $c_i = (\alpha_{i+1}-\alpha_i)^{-1}$. Then $\beta \in \calW(\lambda)$ and $w_\beta := (s_i - c_i) \cdot v_\alpha$ is a nonzero scalar multiple of $v_\beta$; the elements $L_i$, $L_{i+1}$, and $s_i$ leave $S^\lambda_\alpha \oplus S^\lambda_\beta$ invariant; and they act in the basis $\set{v_\alpha,w_\beta}$ of $S^\lambda_\alpha \oplus S^\lambda_\beta$ via the matrices
		\[
		L_i = \begin{bmatrix} \alpha_i & 0 \\ 0 & \alpha_{i+1} \end{bmatrix}, \qquad
		L_{i+1} = \begin{bmatrix} \alpha_{i+1} & 0 \\ 0 & \alpha_i \end{bmatrix}, \qquad
		s_i = \begin{bmatrix} c_i & 1-c_i^2 \\ 1 & -c_i \end{bmatrix}.
		\]
	\end{enumerate}
\end{proposition}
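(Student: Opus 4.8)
The plan is to analyze, inside $S^\lambda$, the subspace $M := \C v_\alpha + \C(s_i \cdot v_\alpha)$ — which has dimension one or two — under the action of $L_i$, $L_{i+1}$, and $s_i$, and to read off the entire statement from the dichotomy $\dim M = 1$ versus $\dim M = 2$, using the semisimplicity of the Jucys--Murphy subalgebra and the combinatorial description $\calW(\lambda) = \smallset{\alpha(T) : T \in \T(\lambda)}$.

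First I would record the relations in $\CS_n$ that will be needed: $s_i$ commutes with $L_j$ whenever $j \neq i, i+1$, while conjugating $L_i = \sum_{j < i}(j,i)$ by $s_i = (i,i+1)$ gives $s_i L_i s_i = L_{i+1} - s_i$, hence $s_i L_i = L_{i+1} s_i - 1$ and, equivalently, $L_i s_i = s_i L_{i+1} - 1$ and $L_{i+1} s_i = s_i L_i + 1$. A direct computation with these relations shows that $M$ is stable under $L_i$, $L_{i+1}$, and $s_i$; that $L_j$ acts on $M$ by the scalar $\alpha_j$ for $j \neq i, i+1$; and that, \emph{whenever} $v_\alpha$ and $s_i v_\alpha$ are linearly independent, the operators $L_i$, $L_{i+1}$, $s_i$ act on $M$ in the ordered basis $\smallset{v_\alpha, s_i v_\alpha}$ by the matrices $\sm{\alpha_i & -1 \\ 0 & \alpha_{i+1}}$, $\sm{\alpha_{i+1} & 1 \\ 0 & \alpha_i}$, and $\sm{0 & 1 \\ 1 & 0}$, respectively.

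Now comes the dichotomy. If $s_i v_\alpha = c \cdot v_\alpha$ for some $c \in \C$, then $c^2 = 1$ since $s_i^2 = 1$, and equating the two expressions for $L_i(s_i v_\alpha)$ furnished by the relations above forces $c(\alpha_{i+1} - \alpha_i) = 1$; hence $\alpha_{i+1} = \alpha_i \pm 1$ and $s_i v_\alpha = (\alpha_{i+1} - \alpha_i) v_\alpha = \pm v_\alpha$. If instead $v_\alpha$ and $s_i v_\alpha$ are independent, then $\dim M = 2$; since the Jucys--Murphy subalgebra is semisimple, $L_i$ acts diagonalizably on the invariant subspace $M$, but an upper-triangular $2 \times 2$ matrix with nonzero superdiagonal entry is diagonalizable only when its two diagonal entries are distinct, so $\alpha_i \neq \alpha_{i+1}$. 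In this case I would then check, using $c_i(\alpha_{i+1} - \alpha_i) = 1$, that $w_\beta := (s_i - c_i) v_\alpha$ is a \emph{nonzero} simultaneous eigenvector for $L_1, \ldots, L_n$ whose weight is exactly $\beta$; hence $\beta \in \calW(\lambda)$, and since the weight spaces of $S^\lambda$ are one-dimensional, $w_\beta$ is a nonzero scalar multiple of $v_\beta$ and $M = S^\lambda_\alpha \oplus S^\lambda_\beta$ is stable under $L_i$, $L_{i+1}$, $s_i$; finally, rewriting the three displayed matrices in the basis $\smallset{v_\alpha, w_\beta}$ produces precisely the matrices in part~(3). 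Assembling the two cases, part~(1) holds unconditionally, the first case implies $\alpha_{i+1} = \alpha_i \pm 1$, and the hypothesis $\alpha_{i+1} \neq \alpha_i \pm 1$ forces the second case and hence all of part~(3).

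What is left is the remaining assertion of part~(2), namely that $\beta \notin \calW(\lambda)$ when $\alpha_{i+1} = \alpha_i \pm 1$; granting it, $S^\lambda_\beta = 0$, so $w_\beta$ cannot exist, we are necessarily in the first case, and $s_i v_\alpha = \pm v_\alpha$. For this I would combine $\calW(\lambda) = \smallset{\alpha(T) : T \in \T(\lambda)}$ with two elementary facts about Young diagrams: a standard tableau is recovered from its residue sequence by inserting $1, 2, 3, \ldots$ in turn at the unique addable corner of the current diagram having the prescribed residue, uniqueness holding because the addable corners of a Young diagram lie on diagonals whose residues differ pairwise by at least two. Writing $\alpha = \alpha(T)$ and letting $\mu$ be the diagram occupied by $\smallset{1, \ldots, i-1}$ in $T$, any standard tableau realizing the sequence $\beta$ would agree with $T$ on $\smallset{1, \ldots, i-1}$ and then place $i$ at an addable corner of $\mu$ of residue $\beta_i = \alpha_{i+1} = \alpha_i \pm 1$; but $T$ itself places $i$ at an addable corner of $\mu$ of residue $\alpha_i$, and $\mu$ has no two addable corners at residue-distance one, a contradiction. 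I expect this combinatorial input to be the main obstacle: the linear algebra on the span of $v_\alpha$ and $s_i v_\alpha$ is essentially forced by the Jucys--Murphy relations, but controlling exactly which weights $\beta$ are realized in $S^\lambda$ requires the tableau model together with the geometry of addable corners.
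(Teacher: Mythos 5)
Your proposal is correct, and note that the paper offers no internal proof to compare against: the proposition is imported verbatim from Kleshchev \cite[Corollary 2.2.3]{Kleshchev:2005}. What you have reconstructed is essentially the standard Okounkov--Vershik/Kleshchev argument. The algebraic half checks out: $s_i$ commutes with $L_j$ for $j\neq i,i+1$, $s_iL_is_i=L_{i+1}-s_i$ gives $L_is_i=s_iL_{i+1}-1$ and $L_{i+1}s_i=s_iL_i+1$, and these do yield the matrices $\sm{\alpha_i & -1 \\ 0 & \alpha_{i+1}}$, $\sm{\alpha_{i+1} & 1 \\ 0 & \alpha_i}$, $\sm{0 & 1 \\ 1 & 0}$ on $\C v_\alpha+\C s_iv_\alpha$; the rank-one case forces $c(\alpha_{i+1}-\alpha_i)=1$ with $c=\pm1$, matching the sign convention in (2); diagonalizability of $L_i$ on an invariant subspace (legitimate, since the paper's setup already gives $S^\lambda=\bigoplus_\alpha S^\lambda_\alpha$) correctly rules out $\alpha_i=\alpha_{i+1}$ in the two-dimensional case; and your eigenvalue check that $w_\beta=(s_i-c_i)v_\alpha$ has weight $\beta$ in \emph{every} coordinate, combined with one-dimensionality of weight spaces, gives $\beta\in\calW(\lambda)$, $w_\beta\in\C v_\beta$, and the change of basis to $\set{v_\alpha,w_\beta}$ reproduces $\sm{c_i & 1-c_i^2 \\ 1 & -c_i}$. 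The genuinely combinatorial point, $\beta\notin\calW(\lambda)$ when $\alpha_{i+1}=\alpha_i\pm1$, is also handled correctly: residue sequences determine standard tableaux by successive insertion at the unique addable corner of the prescribed residue, and two distinct addable corners in rows $k<\ell$ of a diagram $\mu$ have residue difference $(\mu_k-\mu_\ell)+(\ell-k)\geq 2$, so no tableau can realize $\beta$, which forces the rank-one case under the hypothesis of (2). In short: correct and self-contained; relative to the paper's citation its only cost is length, and what it buys is independence from the external reference.
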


\subsection{Weight space decompositions of simple supermodules} \label{subsec:weight-simple-super}

In this section we describe the actions of the odd operators $L_1,\ldots,L_n$ and the transpositions $s_1,\ldots,s_{n-1}$ on the simple $\CS_n$-super\-modules in terms of the weight vectors described in Section \ref{subsec:weight-space-Sn-modules}.

Given $\lambda \vdash n$ and $\alpha \in \calW(\lambda)$, it follows from the intertwining condition \eqref{eq:intertwinor} that the function $\phi^\lambda: S^\lambda \to S^{\lambda'}$ specified in Section \ref{subsec:group-algebra-Sn} defines a linear isomorphism $\phi^\lambda: S^\lambda_\alpha \xrightarrow{\cong} S^{\lambda'}_{-\alpha}$. We will assume that the spanning vectors $v_\alpha \in S^\lambda_\alpha$ and $v_{-\alpha} \in S^{\lambda'}_{-\alpha}$ are chosen so that
	\begin{equation} \label{eq:compatible-negative-weight-vector}
	v_{-\alpha} = \phi^\lambda(v_\alpha).
	\end{equation}
This can be done for all $\lambda \vdash n$ and $\alpha \in \calW(\lambda)$ because the union $\calW(n) = \bigcup_{\lambda \vdash n} \calW(\lambda)$ is disjoint, because $\alpha \neq -\alpha$ for all $\alpha \in \calW(n)$ by the assumption that $n > 1$, and because $\phi^{\lambda'} \circ \phi^\lambda = \id_{S^\lambda}$ and $\phi^\lambda \circ \phi^{\lambda'} = \id_{S^{\lambda'}}$. In terms of standard tableaux, one has $v_{T'} = \phi^\lambda(v_T)$ for all $T \in \T(\lambda)$.

The preceding discussion implies that the elements of $\calW(\lambda) \cup \calW(\lambda')$ occur in $\pm$ pairs. Let
	\[
	\Wbar(\lambda) = [\calW(\lambda) \cup \calW(\lambda')]/{\pm}
	\]
be the set of all such pairs. For $\lambda \in \Pbar(n) = E_n \cup F_n$, we will write $\pm \alpha$ to denote an element of $\Wbar(\lambda)$. This notation implicitly assumes a fixed choice for the `positive' element $\alpha$ of the pair $\pm \alpha$. If $\lambda \in E_n$, we will assume that $\alpha \in \calW(\lambda)$; if $\lambda \in F_n$, we will assume that $\alpha_n \geq 0$. This uniquely determines the choice of the positive element $\alpha$, except when $\lambda \in F_n$ and $\alpha_n = 0$. Now given $\lambda \in \Pbar(n)$, we will describe bases for $W^\lambda_{\zero}$ and $W^\lambda_{\one}$ that are indexed by $\Wbar(\lambda)$.

First let $\lambda \in E_n$, so that $W^\lambda \cong S^\lambda \oplus S^{\lambda'}$ as a $\abs{\CS_n}$-module. Given a pair $\pm \alpha \in \Wbar(\lambda)$, set
	\begin{align} \label{eq:En-valpha-pm}
	v_{\alpha}^+ &= \tfrac{1}{2}(v_\alpha + v_{-\alpha}) = \tfrac{1}{2}\big( v_\alpha + \phi^\lambda(v_\alpha) \big), & v_{\alpha}^- &= \tfrac{1}{2}(v_{\alpha} - v_{-\alpha}) = \tfrac{1}{2}\big( v_\alpha - \phi^\lambda(v_\alpha) \big).
	\end{align}
Then by \cref{lemma:self-associate}, the sets $\smallset{ v_\alpha^+ : \pm \alpha \in \Wbar(\lambda)}$ and $\smallset{ v_\alpha^- : \pm \alpha \in \Wbar(\lambda)}$ are bases for $W^\lambda_{\zero}$ and $W^\lambda_{\one}$, respectively. One has $v_\alpha = v_\alpha^+ + v_\alpha^-$ and $v_{-\alpha} = v_\alpha^+ - v_\alpha^-$.

Next let $\lambda \in F_n$, so that $W^\lambda \cong S^\lambda$ as a $\abs{\CS_n}$-module. Then $W^\lambda_{\zero} = S^{\lambda^+}$ and $W^\lambda_{\one} = S^{\lambda^-}$ are the $+1$ and $-1$ eigenspaces, respectively, for the function $\phi^\lambda: S^\lambda \to S^\lambda$. For each pair $\pm \alpha \in \Wbar(\lambda)$, write $v_\alpha = v_\alpha^+ + v_\alpha^-$, with $v_\alpha^+ \in S^{\lambda^+}$ and $v_\alpha^- \in S^{\lambda^-}$. Then $v_{-\alpha} = \phi^\lambda(v_\alpha) = v_{\alpha}^+ - v_{\alpha}^-$. Now
	\begin{align} \label{eq:Fn-valpha-pm}
	v_{\alpha}^+ &= \tfrac{1}{2}(v_\alpha + v_{-\alpha}) = \tfrac{1}{2}\big( v_\alpha + \phi^\lambda(v_\alpha) \big), & v_{\alpha}^- &= \tfrac{1}{2}(v_{\alpha} - v_{-\alpha}) = \tfrac{1}{2}\big( v_\alpha - \phi^\lambda(v_\alpha) \big),
	\end{align}
and the sets $\smallset{ v_\alpha^+ : \pm \alpha \in \Wbar(\lambda)}$ and $\smallset{ v_\alpha^- : \pm \alpha \in \Wbar(\lambda)}$ are bases for $W^\lambda_{\zero}$ and $W^\lambda_{\one}$, respectively.


With notation as above, one gets $L_i \cdot v_\alpha^+ = \alpha_i  v_{\alpha}^-$ and $L_i \cdot v_\alpha^- = \alpha_i  v_\alpha^+$ for each $1 \leq i \leq n$. For $\pm \alpha \in \Wbar(\lambda)$, set
	\[
	W^\lambda_{\pm \alpha} = \Span \set{ v_\alpha^+,v_\alpha^-} = \Span \set{v_\alpha,v_{-\alpha}}.
	\]
Then $W^\lambda = \bigoplus_{\pm \alpha \in \Wbar(\lambda)} W^\lambda_{\pm \alpha}$. We may refer to $W^\lambda_{\pm \alpha}$ as the $\pm \alpha$-weight space of $W^\lambda$.

The next result follows directly from \cref{prop:si-on-weights}.
	
\begin{proposition} \label{prop:si-on-weights-super}
Let $\alpha = (\alpha_1,\ldots,\alpha_n) \in \calW(\lambda)$. Let $1 \leq i < n$, and set $\beta = s_i.\alpha$.
	\begin{enumerate}
	\item \label{item:si-super-trivial-action} If $\alpha_{i+1} = \alpha_i \pm 1$, then the transposition $s_i$ leaves the superspace $W^\lambda_{\pm \alpha}$ invariant, and it acts in the homogeneous basis $\set{v_\alpha^+,v_\alpha^-}$ of $W^\lambda_{\pm \alpha}$ via the matrix
		\[
			\pm \begin{bmatrix} 0 & 1 \\ 1 & 0 \end{bmatrix},
		\]
	where the $\pm$ sign is the same as in \cref{prop:si-on-weights}(\ref{item:alphai-pm-alphai+1}).

	\item \label{item:si-super-nontrivial-action} Suppose $\alpha_{i+1} \neq \alpha_i \pm 1$. Let $c_i = (\alpha_{i+1}-\alpha_i)^{-1}$, and set
		\begin{align*}
		w_\beta &= (s_i - c_i) \cdot v_\alpha, \\
		w_{-\beta} &= (s_i+c_i) \cdot v_{-\alpha} = - \phi^\lambda(w_\beta), \\
		w_\beta^+ &= \tfrac{1}{2} ( w_\beta + \phi^\lambda(w_\beta) ) = \tfrac{1}{2}(w_\beta - w_{-\beta}), \\
		w_\beta^- &= \tfrac{1}{2} ( w_\beta - \phi^\lambda(w_\beta) ) = \tfrac{1}{2}(w_\beta+w_{-\beta}).
		\end{align*}
Then $\smallset{w_\beta^+,w_\beta^-}$ is a homogeneous basis for $W^\lambda_{\pm \beta}$, the elements $L_i$, $L_{i+1}$, and $s_i$ leave the space $W^\lambda_{\pm \alpha} \oplus W^\lambda_{\pm \beta}$ invariant, and they act in the homogeneous basis $\smallset{v_\alpha^+, w_\beta^+, v_\alpha^-,w_\beta^-}$ of this space via the following supermatrices:
		\begin{align*}
		L_i &= \left[ \begin{array}{cc|cc}
					0 & 0 & \alpha_i & 0 \\
					0 & 0 & 0 & \alpha_{i+1} \\
					\hline
					\alpha_i & 0 & 0 & 0 \\
					0 & \alpha_{i+1} & 0 & 0
					\end{array} \right], & 
		L_{i+1} &= \left[ \begin{array}{cc|cc}
					0 & 0 & \alpha_{i+1} & 0 \\
					0 & 0 & 0 & \alpha_{i} \\
					\hline
					\alpha_{i+1} & 0 & 0 & 0 \\
					0 & \alpha_{i} & 0 & 0
					\end{array} \right],
		\end{align*}
	and
		\[
		s_i =	\left[ \begin{array}{cc|cc}
					0 & 0 & c_i & 1-c_i^2 \\
					0 & 0 & 1 & -c_i \\
					\hline
					c_i & 1-c_i^2 & 0 & 0 \\
					1 & -c_i & 0 & 0
					\end{array} \right].
		\]
	\end{enumerate}
\end{proposition}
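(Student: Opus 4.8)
The plan is to obtain both parts by transporting the classical formulas of \cref{prop:si-on-weights} across the identifications set up in Section~\ref{subsec:weight-simple-super}; since the statement only asserts facts about operators whose action on the Specht modules has already been computed, the proof is a direct substitution. The one structural observation worth isolating at the outset is that each of $L_i$, $L_{i+1}$, and $s_i$ is a linear combination of transpositions, hence lies in $(\CS_n)_{\one}$. Consequently each of these operators interchanges $W^\lambda_{\zero}$ and $W^\lambda_{\one}$, so in a homogeneous basis ordered with the even vectors first its matrix is block-antidiagonal; it therefore suffices to compute the action on the even basis vectors and on the odd basis vectors separately and to read off the two off-diagonal $2 \times 2$ blocks.

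For part~(1), assume $\alpha_{i+1} = \alpha_i \pm 1$. Applying \cref{prop:si-on-weights}(\ref{item:alphai-pm-alphai+1}) to the weight $\alpha$ gives $s_i \cdot v_\alpha = \pm v_\alpha$ and $\beta = s_i.\alpha \notin \calW(\lambda)$, while applying it to the weight $-\alpha$ (which lies in $\calW(\lambda')$ if $\lambda \in E_n$ and in $\calW(\lambda)$ if $\lambda \in F_n$) gives $s_i \cdot v_{-\alpha} = \mp v_{-\alpha}$, with the opposite sign, and $-\beta \notin \calW(\lambda')$. In particular $W^\lambda_{\pm\alpha} = \Span\set{v_\alpha,v_{-\alpha}}$ is $s_i$-invariant, so no new weight space intervenes. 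Feeding these two identities into the definitions \eqref{eq:En-valpha-pm} and \eqref{eq:Fn-valpha-pm} of $v_\alpha^{\pm}$ and using $v_{-\alpha} = \phi^\lambda(v_\alpha)$ from \eqref{eq:compatible-negative-weight-vector} yields $s_i \cdot v_\alpha^+ = \pm v_\alpha^-$ and $s_i \cdot v_\alpha^- = \pm v_\alpha^+$, which is exactly the asserted matrix.

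For part~(2), assume $\alpha_{i+1} \neq \alpha_i \pm 1$ and apply \cref{prop:si-on-weights}(3) to $\alpha$ and, separately, to $-\alpha$. For $-\alpha$ the relevant constant is $((-\alpha)_{i+1} - (-\alpha)_i)^{-1} = -c_i$, so the new weight vector produced there is $(s_i + c_i) \cdot v_{-\alpha}$, which we name $w_{-\beta}$; a short computation from \eqref{eq:intertwinor} with $\sigma = s_i$, using $v_{-\alpha} = \phi^\lambda(v_\alpha)$, shows $w_{-\beta} = -\phi^\lambda(w_\beta)$, the identity recorded in the statement. Since $w_\beta$ is a nonzero scalar multiple of $v_\beta$, it follows that $w_\beta^{\pm}$ are nonzero scalar multiples of $v_\beta^{\pm}$, so $\set{w_\beta^+,w_\beta^-}$ is a homogeneous basis of $W^\lambda_{\pm\beta}$, and $L_i$, $L_{i+1}$, $s_i$ preserve $W^\lambda_{\pm\alpha} \oplus W^\lambda_{\pm\beta}$ because the corresponding classical operators preserve $S^\lambda_\alpha \oplus S^\lambda_\beta$ and its image under $\phi^\lambda$. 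Feeding the classical formulas of \cref{prop:si-on-weights}(3) for the action on $v_\alpha$ and $w_\beta$, together with their $\phi^\lambda$-images (equivalently, the same formulas applied to $-\alpha$, with $c_i$ replaced by $-c_i$), into \eqref{eq:En-valpha-pm}, \eqref{eq:Fn-valpha-pm}, and the displayed definitions of $w_\beta^{\pm}$, and expanding, one finds that for each of the three operators both off-diagonal $2 \times 2$ blocks equal the corresponding $2 \times 2$ matrix of \cref{prop:si-on-weights}(3); this is precisely the claimed block-antidiagonal $4 \times 4$ supermatrix.

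Since every step is a substitution of already-established formulas, there is no genuine obstacle; the only thing demanding care is the sign bookkeeping, above all the facts that the constant $c_i$ changes sign under $\alpha \mapsto -\alpha$ and that $w_{-\beta} = -\phi^\lambda(w_\beta)$ carries the opposite sign from $v_{-\alpha} = +\phi^\lambda(v_\alpha)$. It is also worth noting that the computation never distinguishes the cases $\lambda \in E_n$ and $\lambda \in F_n$: the formulas \eqref{eq:En-valpha-pm} and \eqref{eq:Fn-valpha-pm} for the homogeneous basis vectors are identical and $\phi^\lambda$ satisfies the same intertwining identity \eqref{eq:intertwinor} in both cases, so the two cases can be handled uniformly, the only nominal difference being whether $v_{-\alpha}$ is viewed as a vector of $S^{\lambda'}$ or of $S^\lambda$.
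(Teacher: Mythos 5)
Your proposal is correct and is exactly the verification the paper intends, since the paper simply asserts that the proposition ``follows directly from \cref{prop:si-on-weights}'': you transport the classical formulas to the homogeneous basis via \eqref{eq:En-valpha-pm}, \eqref{eq:Fn-valpha-pm}, and \eqref{eq:compatible-negative-weight-vector}, and your sign bookkeeping (the constant $c_i$ becoming $-c_i$ for $-\alpha$, the identity $w_{-\beta} = -\phi^\lambda(w_\beta)$ from \eqref{eq:intertwinor}, and the sign flip $s_i\cdot v_{-\alpha} = \mp v_{-\alpha}$ in part (1)) is accurate.
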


\subsection{Restriction of simple supermodules} \label{subsec:res-irred-super}

Given partitions $\lambda \vdash n$ and $\mu \vdash (n-1)$, write $\mu \prec \lambda$ if the Young diagram of $\mu$ is obtained by removing a box from the Young diagram of $\lambda$. In this case, let $\res(\lambda/\mu)$ denote the residue of the box that is removed from $\lambda$ to obtain $\mu$. Let $\res(\lambda)$ denote the sum of the residues of all the boxes in the Young diagram for $\lambda$.

Identify $S_{n-1}$ with the subgroup of $S_n$ of all permutations that leave the integer $n$ fixed. Then $\sigma \cdot L_n \cdot \sigma^{-1} = L_n$ for each $\sigma \in S_{n-1}$, and hence $L_n$ commutes (in the ordinary, non-super sense) with each element of $S_{n-1}$. This implies for each partition $\lambda \vdash n$ that $\Res_{S_{n-1}}^{S_n}(S^\lambda)$ decomposes into eigenspaces for the action of $L_n$. In fact, one has
	\begin{equation} \label{eq:S-lambda-restriction}
	\Res_{S_{n-1}}^{S_n}(S^\lambda) = \bigoplus_{\mu \prec \lambda} \Big[ \bigoplus_{\substack{\alpha \in \calW(\lambda) \\ \alpha_n = \res(\lambda/\mu)}} S^\lambda_\alpha \Big],
	\end{equation}
and the summand indexed by $\mu$ is isomorphic as a $\CS_{n-1}$-module to $S^\mu$.

Making the $\CS_{n-1}$-module identifications $S^\lambda = \bigoplus_{\mu \prec \lambda} S^\mu$ and $S^{\lambda'} = \bigoplus_{\mu' \prec \lambda'} S^{\mu'}$, and using the fact that $\Hom_{S_{n-1}}(S^\mu,S^\nu) = 0$ unless $\mu = \nu$, one can show that the functions $\phi^\lambda$ and $\phi^{\lambda'}$ must restrict for each $\mu \prec \lambda$ to linear isomorphisms $S^\mu \to S^{\mu'}$ and $S^{\mu'} \to S^\mu$ that satisfy the relation \eqref{eq:intertwinor} for all $\sigma \in S_{n-1}$, and whose composites are the respective identity functions. Then we may assume that $\phi^\lambda|_{S^\mu} = \phi^\mu$ and $\phi^{\lambda'}|_{S^{\mu'}} = \phi^{\mu'}$; cf.\ \cref{conv:associator-scaling}.
	
Now let $\lambda \in \Pbar(n) = E_n \cup F_n$. As a superspace, one has
	\begin{equation} \label{eq:Wlambda-integer-eigenspaces}
	W^\lambda = \bigoplus_{k \in \Z} W_k^\lambda, \quad \text{where} \quad W_k^\lambda = \bigoplus_{\substack{\pm \alpha \in \Wbar(\lambda) \\ \alpha_n = k}} W^\lambda_{\pm \alpha}.
	\end{equation}
By our conventions for the choice of the `positive' weight $\alpha$ from each pair $\pm \alpha \in \Wbar(\lambda)$, if $\lambda \in F_n$ and $W_k^\lambda \neq 0$, then $k \geq 0$. In general, if $W_k^\lambda \neq 0$, then there exists a unique partition $\mu \vdash (n-1)$ such that $\mu \prec \lambda$ and $\res(\lambda/\mu) = k$. Specifically, $\mu$ is the partition obtained by removing a box of residue $k$ from the outer edge of the Young diagram of $\lambda$. Indeed, a box of residue $k$ can be removed from the outer edge of the Young diagram of $\lambda$ to produce a new partition $\mu$ if and only if there exists a weight $\alpha \in \calW(\lambda)$ with $\alpha_n = k$, and for any given $k$ there is at most one removable box of residue $k$ in the Young diagram of $\lambda$. For any $\lambda \vdash n$, the boxes in the Young diagram of $\lambda$ have residues bounded by $\pm (n-1)$, so in \eqref{eq:Wlambda-integer-eigenspaces} one has $W_k^\lambda \neq 0$ only if $\abs{k} < n$.

Since $L_n$ commutes with $S_{n-1}$, it follows that $W_k^\lambda$ is a $\CS_{n-1}$-subsuper\-module of $W^\lambda$.

\begin{proposition} \label{prop:Wklambda-restriction}
Let $\lambda \in \Pbar(n)$, let $k \in \Z$ such that $W_k^\lambda \neq 0$, and let $\mu \vdash (n-1)$ be the unique partition such that $\mu \prec \lambda$ and $\res(\lambda/\mu) = k$. Then as a $\CS_{n-1}$-supermodule,
	\[
	W_k^\lambda \cong \begin{cases}
	W^\mu \oplus \Pi(W^\mu) & \text{if $\lambda \in E_n$ and $\mu = \mu'$,} \\
	W^\mu & \text{otherwise.}
	\end{cases}
	\]
\end{proposition}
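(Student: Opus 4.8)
The plan is to first determine the underlying $\abs{\CS_{n-1}}$-module of $W_k^\lambda$ (which is a $\CS_{n-1}$-subsupermodule of $W^\lambda$, as noted just before the statement), and then to pin down the parities of its homogeneous pieces, splitting into cases according to whether $\mu=\mu'$. For the ungraded structure I would use \cref{prop:Wlambda-decomposition} to identify $W^\lambda$ with $S^\lambda$ (if $\lambda\in F_n$) or with $S^\lambda\oplus S^{\lambda'}$ (if $\lambda\in E_n$) as an $\abs{\CS_n}$-module, and observe that since $L_n$ is a sum of odd transpositions the intertwining relation \eqref{eq:intertwinor} gives $\phi^\lambda\circ L_n=-L_n\circ\phi^\lambda$. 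The vectors $v_\alpha$ and $v_{-\alpha}=\phi^\lambda(v_\alpha)$ that span $W^\lambda_{\pm\alpha}$ therefore lie in the $L_n$-eigenspaces for the eigenvalues $\alpha_n$ and $-\alpha_n$, and lie in different Specht summands of $W^\lambda$ when $\lambda\in E_n$. Collecting the pairs $\pm\alpha$ with $\alpha_n=k$ and applying the branching rule \eqref{eq:S-lambda-restriction}, one finds that $W_k^\lambda$ is isomorphic, as an $\abs{\CS_{n-1}}$-module, to $S^\mu$ precisely when $\lambda\in F_n$ and $k=0$ (in which case $\mu=\mu'$, because removing the residue-$0$ box of $\lambda=\lambda'$ yields a self-conjugate partition), and to $S^\mu\oplus S^{\mu'}$ in every other case.

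When $\mu\neq\mu'$ the result is forced. Since $\abs{\CS_{n-1}}$ is semisimple, $\CS_{n-1}$ is a semisimple superalgebra by \cref{lemma:ss-super-ordinary}, so $W_k^\lambda$ is a direct sum of simple $\CS_{n-1}$-supermodules; by \cref{prop:Wlambda-decomposition} no Type~M simple supermodule has $S^\mu$ as an ungraded constituent when $\mu\neq\mu'$, so $W_k^\lambda$ is a sum of Type~Q simples, and comparing ungraded constituents forces $W_k^\lambda\cong W^\mu$. When $\lambda\in F_n$ and $k=0$, the first step instead gives $W_0^\lambda\cong S^\mu$ over $\abs{\CS_{n-1}}$ with $\mu=\mu'$, so $W_0^\lambda$ is simple of Type~M and equals $W^\mu$ or $\Pi(W^\mu)$; to decide, I would use that $\phi^\lambda$ preserves $W_0^\lambda$ (the $0$-eigenspace of $L_n$ in $S^\lambda$) and restricts there to $\phi^\mu$, by the conventions fixed following \eqref{eq:S-lambda-restriction} together with \cref{conv:associator-scaling}. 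Then the even subspace $W^\lambda_{\zero}\cap W_0^\lambda$ is the $+1$-eigenspace $S^{\mu^+}=W^\mu_{\zero}$ of $\phi^\mu$ and the odd subspace is $S^{\mu^-}=W^\mu_{\one}$, so $W_0^\lambda\cong W^\mu$.

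The case $\lambda\in E_n$ with $\mu=\mu'$ is the main obstacle, being the only one in which $W_k^\lambda$ is not simple. Here $W_k^\lambda$ is, over $\abs{\CS_{n-1}}$, a sum $M_1\oplus M_2$ of the $L_n$-eigenspaces for $k$ and $-k$, sitting respectively inside the $S^\lambda$- and $S^{\lambda'}$-summands of $W^\lambda$; each $M_i$ is isomorphic to $S^\mu$, and $\phi^\lambda$ restricts to a bijection $M_1\to M_2$ intertwining the $S_{n-1}$-actions up to the sign $(-1)^\sigma$ in the same way $\phi^\mu$ does, so one may choose identifications $M_1\cong S^\mu\cong M_2$ (via weight vectors, as fixed after \eqref{eq:S-lambda-restriction}) under which $\phi^\lambda|_{M_1}$ becomes $\phi^\mu$. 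With these identifications, formula \eqref{eq:En-valpha-pm} shows that the even and odd subspaces of $W_k^\lambda$ become $\smallset{(v,\phi^\mu(v)):v\in S^\mu}$ and $\smallset{(v,-\phi^\mu(v)):v\in S^\mu}$ inside $S^\mu\oplus S^\mu$. One then checks directly that the diagonal $\smallset{(v,v):v\in S^\mu}$ and the antidiagonal $\smallset{(v,-v):v\in S^\mu}$ are $\CS_{n-1}$-subsupermodules: the diagonal meets the even and odd subspaces in (the images of) $S^{\mu^+}$ and $S^{\mu^-}$, hence is isomorphic to $W^\mu$, while the antidiagonal meets them in $S^{\mu^-}$ and $S^{\mu^+}$, hence is isomorphic to $\Pi(W^\mu)$; since $W_k^\lambda$ is their direct sum, $W_k^\lambda\cong W^\mu\oplus\Pi(W^\mu)$. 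The real work in this case is keeping straight the two Specht summands of $W^\lambda$ and the action of $\phi^\lambda$ between them, and invoking the scaling conventions precisely where needed so that the relevant restrictions of $\phi^\lambda$ genuinely are the maps $\phi^\mu$.
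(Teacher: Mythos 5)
Your proposal is correct and follows essentially the same route as the paper: identify the ungraded $\abs{\CS_{n-1}}$-structure of $W_k^\lambda$ via the branching rule, dispose of the cases with $\mu \neq \mu'$ (and the $\lambda \in F_n$, $k=0$ case) using the classification of simple supermodules and the restriction $\phi^\lambda|_{S^\mu} = \phi^\mu$, and in the case $\lambda \in E_n$, $\mu = \mu'$ exhibit the two subsupermodules explicitly. Your diagonal and antidiagonal are exactly the paper's summands $\smallset{u + \phi^\lambda(u) : u \in S^{\mu^+}} \oplus \smallset{w - \phi^\lambda(w) : w \in S^{\mu^-}}$ and its counterpart, written in different coordinates.
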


\begin{proof}
First suppose $\lambda \in E_n$. Then
	\[
	W_k^\lambda = \bigoplus_{\substack{\alpha \in \calW(\lambda) \\ \alpha_n = k}} \left[ S^\lambda_\alpha \oplus S^{\lambda'}_{-\alpha} \right] = \Big[ \bigoplus_{\substack{\alpha \in \calW(\lambda) \\ \alpha_n = \res(\lambda/\mu)}} S^\lambda_\alpha \Big] \oplus \Big[ \bigoplus_{\substack{\alpha \in \calW(\lambda') \\ \alpha_n = \res(\lambda'/\mu')}} S^{\lambda'}_\alpha \Big].
	\]
By \eqref{eq:S-lambda-restriction}, this is isomorphic as a $\abs{\CS_{n-1}}$-module to $S^\mu \oplus S^{\mu'}$. For $\mu \neq \mu'$, this implies that $W_k^\lambda \cong W^\mu$ as a $\CS_{n-1}$-supermodule, so suppose that $\mu = \mu'$. Making the $\abs{\CS_{n-1}}$-module identification
	\[
	S^\mu = \bigoplus_{\substack{\alpha \in \calW(\lambda) \\ \alpha_n = \res(\lambda/\mu)}} S^\lambda_\alpha,
	\]
one sees that $W_k^\lambda$ decomposes into the direct sum of two $\CS_{n-1}$-supermodules,
	\begin{align} 
	W^\mu &\cong \smallset{ u + \phi^\lambda(u) : u \in S^{\mu^+}} \oplus \smallset{w - \phi^\lambda(w) : w \in S^{\mu^-}}, \quad \text{and} \label{eq:Wmu-identification} \\
	\Pi(W^\mu) &\cong \smallset{ u - \phi^\lambda(u) : u \in S^{\mu^+}} \oplus \smallset{w + \phi^\lambda(w) : w \in S^{\mu^-}}. \label{eq:Pi(Wmu)-identification}
	\end{align}
On the right-hand side of the isomorphism in \eqref{eq:Wmu-identification}, the first (resp.\ second) summand is contained in the even (resp.\ odd) subspace of $W_k^\lambda$, while on the right-hand side of \eqref{eq:Pi(Wmu)-identification}, the first (resp.\ second) summand is contained in the odd (resp.\ even) subspace of $W_k^\lambda$. From these identifications, one sees that $W^\mu$ and $\Pi(W^\mu)$ are interchanged by the odd involution $J^\lambda : W^\lambda \to W^\lambda$.

Now suppose $\lambda \in F_n$. If $k > 0$, then the partition $\mu$ is non-symmetric, $\res(\lambda/\mu') = -k$, and
	\[
	W_k^\lambda = \Big[ \bigoplus_{\substack{\alpha \in \calW(\lambda) \\ \alpha_n = \res(\lambda/\mu)}} S^\lambda_\alpha \Big] \oplus \Big[ \bigoplus_{\substack{\alpha \in \calW(\lambda) \\ \alpha_n = \res(\lambda/\mu')}} S^{\lambda}_\alpha \Big] \cong S^\mu \oplus S^{\mu'}
	\]
as $\abs{\CS_{n-1}}$-modules. This implies that $W_k^\lambda \cong W^\mu$ as a $\CS_{n-1}$-supermodule. On the other hand, if $k = 0$, then $\mu$ is symmetric, and
	\[
	W_k^\lambda = \Big[ \bigoplus_{\substack{\alpha \in \calW(\lambda) \\ \alpha_n = \res(\lambda/\mu)}} S^\lambda_\alpha \Big] \cong S^\mu
	\]
as a $\abs{\CS_{n-1}}$-module. Since $\phi^\lambda: S^\lambda \to S^\lambda$ restricts to $\phi^\mu: S^\mu \to S^\mu$ via this identification, one deduces that the $+1$-eigenspace of $\phi^\mu$ is contained in the $+1$-eigenspace of $\phi^\lambda$. Then $S^{\mu^+}$ is concentrated in even superdegree, so $W_k^\lambda \cong W^\mu$ as a $\CS_{n-1}$-supermodule.
\end{proof}

Let $W^{\lambda} \da{\CS_{n-1}}$ denote the restriction of $W^\lambda$ to the subalgebra $\CS_{n-1}$ of $\CS_n$.

\begin{corollary} \label{cor:Wlambda-restriction}
Let $\lambda \in \Pbar(n)$. Then
	\[
	W^\lambda \da{\CS_{n-1}} \cong \begin{cases}
	\displaystyle \Bigg[ \bigoplus_{\substack{\mu \prec \lambda \\ \mu \neq \mu'}} W^\mu \Bigg] \oplus \Bigg[ \bigoplus_{\substack{\mu \prec \lambda \\ \mu = \mu'}} W^\mu \oplus \Pi(W^\mu) \Bigg] & \text{if $\lambda \in E_n$,} \\
	\displaystyle \bigoplus_{\substack{\mu \prec \lambda \\ \res(\lambda/\mu) \geq 0}} W^\mu & \text{if $\lambda \in F_n$.}
	\end{cases}
	\]
\end{corollary}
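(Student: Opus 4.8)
The plan is to deduce the corollary directly from \cref{prop:Wklambda-restriction} by decomposing $W^\lambda$ into simultaneous eigenspaces for the odd operator $L_n$. Recall from \eqref{eq:Wlambda-integer-eigenspaces} and the discussion preceding \cref{prop:Wklambda-restriction} that $W^\lambda = \bigoplus_{k \in \Z} W_k^\lambda$, that each $W_k^\lambda$ is a $\CS_{n-1}$-subsupermodule of $W^\lambda$ (because $L_n$ commutes with $S_{n-1}$ in the ordinary sense), and that $W_k^\lambda \neq 0$ if and only if there is a removable box of residue $k$ in the Young diagram of $\lambda$, equivalently, if and only if there is a necessarily unique partition $\mu \vdash (n-1)$ with $\mu \prec \lambda$ and $\res(\lambda/\mu) = k$. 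Thus $W^\lambda \da{\CS_{n-1}} \cong \bigoplus_{\mu \prec \lambda} W_{\res(\lambda/\mu)}^\lambda$ as $\CS_{n-1}$-supermodules, and it remains to identify each summand.

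First I would treat the case $\lambda \in E_n$. Here the assignment $\mu \mapsto \res(\lambda/\mu)$ is a bijection from $\smallset{\mu : \mu \prec \lambda}$ onto $\smallset{k \in \Z : W_k^\lambda \neq 0}$, since no two removable boxes of a fixed $\lambda$ share a residue. Applying \cref{prop:Wklambda-restriction} to each summand gives $W_{\res(\lambda/\mu)}^\lambda \cong W^\mu \oplus \Pi(W^\mu)$ when $\mu = \mu'$ and $W_{\res(\lambda/\mu)}^\lambda \cong W^\mu$ when $\mu \neq \mu'$, and summing over $\mu \prec \lambda$ yields the stated formula. Here $W^\mu$ for $\mu \vdash (n-1)$ is understood via the identification $W^\mu \cong W^{\mu'}$ from \cref{prop:Wlambda-decomposition}, so the summands are well defined independently of the choice of $\Pbar(n-1)$.

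Next I would treat the case $\lambda \in F_n$. The additional input is that, by our convention of choosing the positive weight $\alpha$ from each pair $\pm\alpha \in \Wbar(\lambda)$ to satisfy $\alpha_n \geq 0$, one has $W_k^\lambda = 0$ whenever $k < 0$; hence $W^\lambda \da{\CS_{n-1}} = \bigoplus_{k \geq 0} W_k^\lambda$. For each $k \geq 0$ with $W_k^\lambda \neq 0$ there is a unique $\mu \prec \lambda$ with $\res(\lambda/\mu) = k$, and conversely every $\mu \prec \lambda$ with $\res(\lambda/\mu) \geq 0$ arises this way: a standard $\lambda$-tableau placing $n$ in that box produces a weight $\alpha$ with $\alpha_n = \res(\lambda/\mu) \geq 0$, hence a nonzero element of $W_{\res(\lambda/\mu)}^\lambda$. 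Since \cref{prop:Wklambda-restriction} gives $W_k^\lambda \cong W^\mu$ in this case, summing over such $\mu$ gives the claimed decomposition.

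I do not expect a serious obstacle: the substantive content is contained in \cref{prop:Wklambda-restriction}, and what remains is bookkeeping that matches removable boxes of $\lambda$ to the nonzero $L_n$-eigenspaces of $W^\lambda$. The two points that require a moment's care are (i) that distinct removable boxes of a fixed $\lambda$ have distinct residues, so that the eigenspace decomposition is indexed without repetition by $\smallset{\mu : \mu \prec \lambda}$, and (ii) the vanishing of $W_k^\lambda$ for $k < 0$ when $\lambda \in F_n$, which is precisely the source of the asymmetry between the two cases in the statement.
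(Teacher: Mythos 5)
Your proposal is correct and follows essentially the same route the paper intends: the corollary is obtained by summing the $L_n$-eigenspace decomposition $W^\lambda = \bigoplus_k W_k^\lambda$ of \eqref{eq:Wlambda-integer-eigenspaces} and applying \cref{prop:Wklambda-restriction} to each nonzero summand, with the bijection between removable boxes and nonzero $W_k^\lambda$ and the convention $\alpha_n \geq 0$ for $\lambda \in F_n$ supplying exactly the bookkeeping you describe.
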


\begin{remark} \label{rem:multiplicity-free}
The corollary implies that, if one allows only even supermodule homomorphisms (so that a supermodule and its parity shift are not necessarily isomorphic), then the restriction $W^\lambda \da{\CS_{n-1}}$ is multiplicity free, just as in the classical (non-super) situation for Specht modules. If, on the other hand, one allows odd isomorphisms as well (so that a supermodule and its parity shift are \emph{odd} isomorphic), then the restriction $W^\lambda \da{\CS_{n-1}}$ is multiplicity free if $\lambda \in F_n$, but may have a  (unique) repeated composition factor if $\lambda \in E_n$.
\end{remark}

\begin{remark} \label{remark:An-restriction}
The following restriction formulas for the alternating groups can be deduced by taking homogeneous subspaces in \cref{cor:Wlambda-restriction}; see also \cite[\S6]{Ruff:2008} or \cite[\S4]{Geetha:2018}:
	\begin{itemize}
	\item If $\lambda \in E_n$, then $\Res^{A_n}_{A_{n-1}}(S^\lambda) \cong \left[ \bigoplus_{\substack{\mu \prec \lambda \\ \mu \neq \mu'}} S^\mu \right] \oplus \Bigg[ \bigoplus_{\substack{\mu \prec \lambda \\ \mu = \mu'}} S^{\mu^+} \oplus S^{\mu^-} \Bigg]$

	\item If $\lambda \in F_n$, then $\Res^{A_n}_{A_{n-1}}(S^{\lambda^{\pm}}) \cong \left[ \bigoplus_{\substack{\mu \prec \lambda \\ \res(\lambda/\mu) > 0}} S^\mu \right] \oplus \Bigg[ \bigoplus_{\substack{\mu \prec \lambda \\ \res(\lambda/\mu) = 0}} S^{\mu^{\pm}} \Bigg]$
	\end{itemize}
\end{remark}

\section{The Lie superalgebra generated by transpositions}\label{S:LSAgeneratedbytranspositions}

\subsection{The setup} \label{subsec:setup}

Recall the associative superalgebras defined in \cref{ex:M(m|n)} and \cref{ex:Q(n)}. Given a vector superspace $V \cong \C^{m|n}$, write $\gl(V)$ and $\glmn$ for the sets $\End(V)$ and $M(m|n)$, respectively, considered as Lie super\-algebras via the super commutator
	\[
	[x,y] = xy - (-1)^{\ol{x} \cdot \ol{y}}yx.
	\]
If $V$ is a vector superspace equipped with an odd involution $J : V \to V$ (so in particular, the even and odd subspaces of $V$ must be of the same dimension), write $\fq(V)$ and $\fq(n)$ for the sets $Q(V)$ and $Q(n)$, respectively, considered as Lie super\-algebras via the super commutator. For an arbitrary Lie superalgebra $\g$, we denote its derived subalgebra $[\g,\g]$ by $\fD(\g)$. Then
	\[
	\fD(\glmn) = \fsl(m|n) := \Set{ \left[	\begin{array}{c|c}
								A & B \\
								\hline
								C & D
								\end{array} \right] \in \glmn : \tr(A) - \tr(D) = 0}.
	\]

Let $V$ be a vector superspace equipped with an odd involution $J : V \to V$, let $\theta \in \fq(V)$, and let $\theta = \theta_{\zero} + \theta_{\one}$ be the decomposition of $\theta$ into its even and odd components. Then $J \circ \theta_{\one} = \theta_{\one} \circ J$ restricts to an even linear map $(J \circ \theta_{\one})|_{\Vzero} : \Vzero \to \Vzero$. Identifying $\Vzero$ and $\Vone$ via $J$, this is equal to the even linear map $(\theta_{\one} \circ J)|_{\Vone} : \Vone \to \Vone$. Now define the \emph{odd trace} of $\theta$, denoted $\otr(\theta)$, by
	\begin{equation} \label{eq:odd-trace}
	\otr(\theta) = \tr\big( (J \circ \theta_{\one})|_{\Vzero} \big) = \tr\big( (\theta_{\one} \circ J)|_{\Vone} \big),
	\end{equation}
and define the subsuperspace $\sq(V) \subseteq \fq(V)$ by
	\[
	\sq(V) = \Set{ \theta \in \fq(V) : \otr(\theta) = 0 }.
	\]
Then one can show that $\fD(\fq(V)) = \sq(V)$. Fixing a basis for $V$ as in \cref{ex:Q(n)}, one has
	\begin{equation} \label{eq:sq(n)-supermatrices}
	\fD(\fq(n)) = \sq(n) := \Set{ \left[	\begin{array}{c|c}
								A & B \\
								\hline
								B & A
								\end{array} \right] \in \fq(n) : \tr(B) = 0 }.
	\end{equation}
	
\begin{lemma} \ \label{lem:generated-by-odd}
	\begin{enumerate}
	\item If $m \geq 2$, then $\fsl(m|m)$ is generated as a Lie superalgebra by $\fsl(m|m)_{\one}$.
	\item If $m \geq 3$, then $\sq(m)$ is generated as a Lie superalgebra by $\sq(m)_{\one}$.
	\end{enumerate}
\end{lemma}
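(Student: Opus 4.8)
The plan, in both parts, is the same: write $\fa$ for the Lie sub-superalgebra generated by $\gone$, note that $\fa \supseteq \gone + [\gone,\gone]$ and that $[\gone,\gone] \subseteq \gzero$, and then prove the (a priori stronger) statement $[\gone,\gone] = \gzero$. In each case $\gone$ and $\gzero$ are explicit spaces of block matrices, so the argument reduces to evaluating the super commutator of two odd elements on matrix units $E_{ij}$ and checking a spanning statement.

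For part (1), the odd part of $\fsl(m|m)$ consists of the matrices $\sm{0 & B \\ C & 0}$ with $B, C \in M_m(\C)$ arbitrary, while $\gzero$ is the space of block-diagonal matrices $\sm{A & 0 \\ 0 & D}$ with $\tr(A) = \tr(D)$, and one computes
\[
\Big[ \sm{0 & B \\ C & 0}, \sm{0 & B' \\ C' & 0} \Big] = \sm{BC' + B'C & 0 \\ 0 & CB' + C'B}.
\]
Specializing $C = B' = 0$ and taking $B = E_{ij}$, $C' = E_{kl}$ produces $\sm{\delta_{jk}E_{il} & 0 \\ 0 & \delta_{li}E_{kj}}$; running over all matrix units, this family contains $\sm{E_{il} & 0 \\ 0 & 0}$ and $\sm{0 & 0 \\ 0 & E_{kj}}$ for all $i \neq l$, $k \neq j$, and also $\sm{E_{ii} & 0 \\ 0 & E_{jj}}$ for all $i, j$. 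A dimension count shows these span $\gzero$ whenever $m \geq 2$, whence $[\gone,\gone] = \gzero$.

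For part (2), the odd part of $\sq(m)$ consists of the matrices $\sm{0 & B \\ B & 0}$ with $\tr(B) = 0$, while $\gzero = \{\sm{A & 0 \\ 0 & A} : A \in M_m(\C)\}$, which I identify with $M_m(\C)$ via $\sm{A & 0 \\ 0 & A} \mapsto A$. Under this identification
\[
\Big[ \sm{0 & B \\ B & 0}, \sm{0 & B' \\ B' & 0} \Big] \longmapsto BB' + B'B,
\]
so the task is to show $\Span\{BB'+B'B : B, B' \in M_m(\C),\ \tr(B) = \tr(B') = 0\} = M_m(\C)$. Taking $B = E_{ij}$, $B' = E_{ji}$ with $i \neq j$ gives $E_{ii}+E_{jj}$, and taking $B = E_{ij}$, $B' = E_{jl}$ with $i, j, l$ pairwise distinct---possible precisely because $m \geq 3$---gives the off-diagonal matrix unit $E_{il}$. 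Combining these with the identity $\tfrac12[(E_{ii}+E_{jj}) + (E_{ii}+E_{kk}) - (E_{jj}+E_{kk})] = E_{ii}$ shows the span is all of $M_m(\C)$, so again $[\gone,\gone] = \gzero$.

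The step that requires the most care is the last one in part (2), and in particular recognizing that the hypothesis $m \geq 3$ is genuinely needed rather than an artifact of the argument: for $m = 2$ one checks directly (using that $\fsl(2)$ is spanned by $E_{12}$, $E_{21}$, $E_{11}-E_{22}$) that $[\sq(2)_{\one},\sq(2)_{\one}]$ is spanned by the identity matrix alone, hence is a proper subspace of $\sq(2)_{\zero}$, so $\sq(2)$ is \emph{not} generated by its odd part. By contrast, the computation in part (1) goes through verbatim for $m = 1$ as well, though that case is not needed here.
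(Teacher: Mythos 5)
Your proof is correct and takes essentially the same route as the paper: the paper's proof simply leaves as an exercise the verification that matrix units (or sums of two matrix units such as $E_{ii}+E_{jj}$) spanning the even parts arise as brackets of odd elements, and that is exactly the computation you carry out, showing $[\gone,\gone]=\gzero$ in both cases. Your observation that $m\geq 3$ is genuinely needed for $\sq(m)$ (since for traceless $2\times 2$ matrices the anticommutator is a scalar multiple of the identity) is a correct sharpening not spelled out in the paper.
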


\begin{proof}
It is an exercise to show that various matrix units (or sums of two matrix units) spanning the even parts of the Lie super\-algebras can be obtained as Lie brackets of odd elements.
\end{proof}

Given a supergroup $G$, write $\Lie(\CG)$ for the group algebra $\CG$ considered as a Lie superalgebra via the super commutator $[x,y] = xy-(-1)^{\ol{x} \cdot \ol{y}}yx$, and set $\fD(\CG) = \fD(\Lie(\CG))$. \cref{cor:CSn-as-superalgebra} then gives the Lie superalgebra isomorphism
		\begin{equation} \label{eq:CSn-Lie-superalgebra-iso}
		\Lie(\CS_n) \cong \Bigg[ \bigoplus_{\lambda \in E_n} \fq(W^\lambda) \Bigg] \oplus \Bigg[ \bigoplus_{\lambda \in F_n} \gl(W^\lambda) \Bigg] \cong \Bigg[ \bigoplus_{\lambda \in E_n} \fq(f^\lambda) \Bigg] \oplus \Bigg[ \bigoplus_{\lambda \in F_n} \gl(\tfrac{1}{2}f^\lambda, \tfrac{1}{2}f^\lambda) \Bigg],
		\end{equation}
where $f^\lambda = \dim(S^\lambda)$. Taking derived subalgebras, one has
		\begin{equation} \label{eq:D(CSn)}
		\fD(\CS_n) \cong \Bigg[ \bigoplus_{\lambda \in E_n} \sq(W^\lambda) \Bigg] \oplus \Bigg[ \bigoplus_{\lambda \in F_n} \fsl(W^\lambda) \Bigg] \cong \Bigg[ \bigoplus_{\lambda \in E_n} \sq(f^\lambda) \Bigg] \oplus \Bigg[ \bigoplus_{\lambda \in F_n} \fsl(\tfrac{1}{2}f^\lambda, \tfrac{1}{2}f^\lambda) \Bigg].
		\end{equation}
From this one sees that
	\begin{align*}
	\dim(\fD(\CS_n)_{\zero}) &= \dim((\CS_n)_{\zero}) - \abs{F_n} = \tfrac{n!}{2} - \abs{F_n}, \quad \text{and} \\
	\dim(\fD(\CS_n)_{\one}) &= \dim((\CS_n)_{\one}) - \abs{E_n} = \tfrac{n!}{2} - \abs{E_n}.
	\end{align*}
In total, $\dim(\fD(\CS_n)) = n! - \abs{E_n \cup F_n}$.

Let $T_n = \sum_{j=1}^n L_j = \sum_{j=1}^n \sum_{i=1}^{j-1} (i,j)$, the sum in $\CS_n$ of all transpositions $\tau \in S_n$. Let $\lambda \vdash n$. Since all trans\-positions are conjugate in $S_n$, the trace of the map $S^\lambda(\tau): S^\lambda \to S^\lambda$ is independent of $\tau$. This implies for each transposition $\tau$ that $\tr( S^\lambda(T_n) ) = \binom{n}{2} \cdot \tr ( S^\lambda(\tau) )$, and hence $\tau - \frac{2}{n(n-1)} \cdot T_n$ is a traceless operator on $S^\lambda$. Then \eqref{eq:En-matrices} implies for $\lambda \in E_n$ that
	\[ \textstyle
	W^\lambda \left( \tau - \frac{2}{n(n-1)} \cdot T_n \right) \in \sq(W^\lambda) = \fD(\fq(W^\lambda)),
	\]
while for $\lambda \in F_n$ one gets
	\[ \textstyle
	W^\lambda \left( \tau - \frac{2}{n(n-1)} \cdot T_n \right) \in \fsl(W^\lambda) = \fD(\gl(W^\lambda)),
	\]
because for any finite-dimensional superspace $V$ one has $\gl(V)_{\one} = \fsl(V)_{\one} = \fD(\gl(V))_{\one}$. Combining these observations, one gets
	\begin{equation} \label{eq:generators-in-derived}
	\textstyle \set{ \tau - \frac{2}{n(n-1)} \cdot T_n : \tau \text{ is a transposition in $S_n$}} \subseteq \fD(\CS_n).
	\end{equation}
On the other hand, for any partition $\lambda \vdash n$, $T_n$ acts on $S^\lambda$ as scalar multiplication by $\res(\lambda)$, the sum of the residues of the boxes in the Young diagram of $\lambda$. For $n > 1$, there are non-symmetric partitions for which this scalar is nonzero, so this implies by \eqref{eq:En-matrices} that $T_n \notin \fD(\CS_n)$, and hence $\tau \notin \fD(\CS_n)$, as well, for each transposition $\tau$. Since $T_n$ acts on $S^\lambda$ as scalar multiplication by $\res(\lambda)$, it follows that
	\begin{equation} \label{eq:Wlambda(Tn)}
	W^\lambda(T_n) = \begin{cases}
	\res(\lambda) \cdot J^\lambda & \text{if $\lambda \in E_n$,} \\
	0 & \text{if $\lambda \in F_n$,}
	\end{cases}
	\end{equation}
where $J^\lambda: W^\lambda \to W^\lambda$ is the odd involution defined in \cref{prop:Wlambda-decomposition}\eqref{item:Wlambda-En}.

\begin{remark}
If $\res(\lambda) \neq 0$, then $\lambda \neq \lambda'$. The converse of this statement is false. For example, if $\lambda = (5,5,5,3,1,1)$, then $\lambda \neq \lambda'$ but $\res(\lambda) = 0$.
\end{remark}

\begin{definition}
Let $\g_n \subseteq \CS_n$ be the Lie superalgebra generated by all transpositions in $S_n$.
\end{definition}

Evidently, $T_n \in \g_n$. Then \eqref{eq:generators-in-derived} implies that
	\begin{equation} \label{eq:gn-subset-derived-plus-Tn}
	\g_n \subseteq \fD(\CS_n) + \C \cdot T_n.
	\end{equation}
Our goal by the end of the paper is to show that \eqref{eq:gn-subset-derived-plus-Tn} is an equality for all $n \geq 2$.

\begin{lemma} \label{lemma:gn-equality}
If $n \in \set{2,3,4,5}$, then $\g_n = \fD(\CS_n) + \C \cdot T_n$.
\end{lemma}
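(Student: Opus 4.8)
Since $T_n\in\g_n$ and $\g_n\subseteq\fD(\CS_n)+\C T_n$ by \eqref{eq:gn-subset-derived-plus-Tn}, the assertion is equivalent to the inclusion $\fD(\CS_n)\subseteq\g_n$, and by \eqref{eq:D(CSn)} it suffices to show that each block summand $\sq(W^\lambda)$ $(\lambda\in E_n)$ and $\fsl(W^\lambda)$ $(\lambda\in F_n)$ of $\fD(\CS_n)$ lies in $\g_n$. The plan is to first compute, for each $n\in\{2,3,4,5\}$, the image $p_\lambda(\g_n)$ under every block projection $p_\lambda\colon\CS_n\to\End(W^\lambda)$, and then to assemble these images into a statement about $\g_n$ itself.

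For the first step I would tabulate the blocks: for $n=2$ only $\lambda=(2)$, with block $\fq(1)$; for $n=3$, $(3)$ with $\fq(1)$ and $(2,1)$ with $\gl(1|1)$; for $n=4$, $(4)$ with $\fq(1)$, $(3,1)$ with $\fq(3)$, $(2,2)$ with $\gl(1|1)$; for $n=5$, $(5)$ with $\fq(1)$, $(4,1)$ with $\fq(4)$, $(3,2)$ with $\fq(5)$, $(3,1,1)$ with $\gl(3|3)$. In every case no two blocks are isomorphic and $\res(\lambda)\ne 0$ for all $\lambda\in E_n$. For each \emph{large} block, i.e.\ $\fq(3)$ for $n=4$ and $\fq(4),\fq(5),\gl(3|3)$ for $n=5$, the subsuperalgebra $p_\lambda(\g_n)$ contains the odd elements $p_\lambda\bigl(\tau-\tfrac{2}{n(n-1)}T_n\bigr)$ for all transpositions $\tau$, which lie in the odd part of $\sq(W^\lambda)$ (resp.\ $\fsl(W^\lambda)$) by \eqref{eq:generators-in-derived}; a direct computation with the matrix formulas \eqref{eq:En-matrices} and \eqref{eq:Fn-matrices} and \cref{prop:si-on-weights-super} shows that iterated brackets of these generators already span the \emph{entire} odd part of the block, so that \cref{lem:generated-by-odd} yields $p_\lambda(\g_n)\supseteq\sq(W^\lambda)$ (resp.\ $\supseteq\fsl(W^\lambda)$); since $p_\lambda(T_n)=\res(\lambda)J^\lambda\ne 0$ for $\lambda\in E_n$ and $p_\lambda(T_n)=0$ for $\lambda\in F_n$ by \eqref{eq:Wlambda(Tn)}, one concludes $p_\lambda(\g_n)=\fq(W^\lambda)$ for $\lambda\in E_n$ and $p_\lambda(\g_n)=\fsl(W^\lambda)$ for $\lambda\in F_n$. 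For the remaining \emph{small} blocks, with $\dim(W^\lambda)=2$: every transposition acts on $W^{(n)}$ as the odd involution $J^{(n)}$, so $\tfrac{1}{2}[J^{(n)},J^{(n)}]=\id_{W^{(n)}}$ gives $p_{(n)}(\g_n)=\fq(1)$; and on $W^{(2,1)}$ (for $n=3$) or $W^{(2,2)}$ (for $n=4$) the transpositions act via \eqref{eq:Fn-matrices} as explicit $2\times 2$ matrices whose entries are values of a linear character of the alternating group, and writing these out and taking two or three brackets gives $p_\lambda(\g_n)=\fsl(W^\lambda)$.

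For the assembly, observe that since the projection of a bracket is the bracket of the projections, $[\g_n,\g_n]$ surjects onto $[p_\lambda(\g_n),p_\lambda(\g_n)]$ in every block. Using $[\fq(m),\fq(m)]=\sq(m)$ and $[\gl(m|m),\gl(m|m)]=\fsl(m|m)$ for all $m\ge 1$, together with $[\fsl(m|m),\fsl(m|m)]=\fsl(m|m)$ for $m\ge 2$, this shows that $[\g_n,\g_n]$ surjects onto the \emph{entire} block $\sq(W^\lambda)$ for every $\lambda\in E_n$ and onto $\fsl(W^\lambda)$ for every $\lambda\in F_n$ with $\dim(W^\lambda)\ge 3$. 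When $n=5$ this covers all blocks (as $\sq(1)=\C\cdot\id$ is itself one-dimensional), and a Goursat-type argument then finishes: the blocks $\sq(1),\sq(4),\sq(5),\fsl(3|3)$ are pairwise non-isomorphic, the last three are perfect (hence have no nonzero abelian quotient) while $\sq(1)$ is abelian, so no two of them share a nonzero quotient and therefore $[\g_5,\g_5]=\fD(\CS_5)\subseteq\g_5$. When $n=4$ the same reasoning still shows that $\sq(3)\subseteq[\g_4,\g_4]\subseteq\g_4$ (the block $\sq(3)$, being perfect, has no nonzero quotient in common with the two remaining one-dimensional abelian blocks $\sq(1)$ and $[\fsl(1|1),\fsl(1|1)]$); the full $\fsl(1|1)$ block and the $\sq(1)$ block are then recovered by hand, using that $\id=\tfrac{1}{2}[\tau,\tau]\in\g_4$ and that the idempotent $e_{(3,1)}$ already lies in $\sq(3)\subseteq\g_4$, that a difference $\tau-\tau'$ of transpositions has vanishing $(n)$-component and, after subtracting its (now available) $\sq(3)$-component, lies entirely in the $(2,2)$-block — these differences sweep out $\fsl(W^{(2,2)})_{\one}$ — and that a suitable difference of two brackets of transpositions isolates the idempotent $e_{(2,2)}$. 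The cases $n=2,3$ are small enough to instead verify directly that $\dim\g_n$ equals $2$ and $5$, respectively, which are the dimensions of $\fD(\CS_n)+\C T_n$.

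The main obstacle is this assembly step for $n=3,4$: the block projections do not by themselves determine $\g_n$, since a priori $\g_n$ could be a proper ``diagonal'' subalgebra — indeed $\C T_n$ already forces a diagonal identification of the $J^\lambda$-directions across the blocks in $E_n$ — so one has to produce enough concrete elements of $\g_n$ (brackets of specific transpositions, the identity, and the block idempotents) to exclude any further degeneracies. The other nontrivial ingredient is the block-by-block verification in the large blocks that iterated brackets of the traceless transposition images exhaust the odd part of $\sq(W^\lambda)$ or $\fsl(W^\lambda)$; this is a finite but involved linear-algebra computation, heaviest for $\fq(5)$ and $\gl(3|3)$ when $n=5$.
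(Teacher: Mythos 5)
Your route is genuinely different from the paper's. The paper disposes of this lemma in three lines: since $T_n \notin \fD(\CS_n)$, the right-hand side has dimension $n! - \abs{E_n \cup F_n} + 1$, and the authors certify $\dim(\g_n) \geq n! - \abs{E_n \cup F_n} + 1$ for $n \in \set{2,3,4,5}$ by a GAP computation (remarking only that $n \leq 4$ could be checked by hand). You instead propose a structural, computer-free argument: compute the image of $\g_n$ in each block of \cref{cor:CSn-as-superalgebra}, then reassemble. Your block inventory is correct, the small-block computations ($\fq(1)$, and $\gl(1|1)$ where the entries are values of the nontrivial linear characters of $A_3$ or $A_4$) do work out, and the assembly is sound: the Goursat step is legitimate because $\sq(3)$, $\sq(4)$, $\sq(5)$, $\fsl(3|3)$ are perfect with pairwise non-isomorphic simple quotients while the remaining derived block images are one-dimensional abelian, and your $n=4$ recovery of the $\fsl(1|1)$ and $\sq(1)$ blocks from $1_{\CS_4}$, $e_{(3,1)}$, and differences $\tau - \tau'$ of transpositions does span $\fsl(W^{(2,2)})_{\one}$ and then $\C\,\id_{W^{(2,2)}}$. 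What your approach buys is a proof with no computer dependence and some insight into how the transpositions fill out each block; what the paper's buys is brevity.

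The caveat is that, as written, your pivotal claims are asserted rather than proved: that iterated brackets of the elements $W^\lambda\bigl(\tau - \tfrac{2}{n(n-1)}T_n\bigr)$ exhaust the odd part of $\sq(W^\lambda)$ (resp.\ $\fsl(W^\lambda)$) in the blocks $\fq(3)$ for $n=4$ and $\fq(4)$, $\fq(5)$, $\gl(3|3)$ for $n=5$. Everything in your assembly funnels through these statements, and they are precisely the content that the paper delegates to GAP --- note that the six (resp.\ ten) transposition images alone do not span the odd part of these blocks, so the bracket computations are genuinely needed and are nontrivial, heaviest exactly where you say ($\fq(5)$ and $\gl(3|3)$, which the paper does not claim to have done by hand, and which matter because $n=5$ is the base case of the grand-loop induction). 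So nothing in your plan would fail, but it is not a complete proof until those finite linear-algebra verifications are actually carried out (by hand or by machine); at that point you would have an honest alternative to the paper's dimension-count-plus-GAP argument.
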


\begin{proof}
Since $T_n \notin \fD(\CS_n)$, the sum $\fD(\CS_n) + \C \cdot T_n$ is direct, and hence
	\[
	\dim(\fD(\CS_n) + \C \cdot T_n) = \dim(\fD(\CS_n))+1 = n! - \abs{E_n \cup F_n} + 1.
	\]
For $n \in \set{2,3,4,5}$, we have verified that $\dim(\g_n) \geq n! - \abs{E_n \cup F_n} + 1$, and hence \eqref{eq:gn-subset-derived-plus-Tn} is an equality, via calculations in GAP \cite{GAP4}.
\end{proof}

\begin{remark}
It is straightforward, if somewhat tedious, to check by hand for $n \in \set{2,3,4}$ that $\dim(\g_n) \geq n! - \abs{E_n \cup F_n} + 1$. Later in Section \ref{subsec:proof-En}, we will find it convenient to assume that \eqref{eq:gn-subset-derived-plus-Tn} is an equality for $n=5$, as well, to help avoid certain annoying special cases. In fact, we have verified that \cref{lemma:gn-equality} is also true for $n=6$ and $n=7$ using GAP, but there is nothing to be gained in our induction argument by taking these cases for granted.
\end{remark}

\begin{lemma} \label{lemma:lie-algebra-centers}
Let $n \geq 2$. Set $\g = \g_n$.
	\begin{enumerate}
	\item $Z(\CS_n) = Z(\CS_n)_{\zero}$.
	\item $Z(\g) \subseteq Z(\CS_n)$. In particular, $Z(\g) \subseteq \gzero$.
	\item If $n \geq 5$, then $Z(\gzero) = \gzero \cap Z(\CA_n)$, and the projection map $p: \CA_n \to Z(\CA_n)$,
		\[
		p(z) = \frac{2}{n!}  \sum_{\sigma \in A_n} \sigma z \sigma^{-1},
		\]
	restricts to a projection map $p: \gzero \to Z(\gzero)$. For this map, one has $p(\fD(\gzero)) \subseteq \fD(\gzero)$.
	\end{enumerate}
\end{lemma}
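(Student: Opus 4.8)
My plan is to prove the three parts in order, since part~(1) feeds into part~(2) and the first assertion of part~(3) is needed for the statements about $p$. For part~(1) I would invoke the super Artin--Wedderburn decomposition of \cref{cor:CSn-as-superalgebra}. The graded center of a finite direct sum of superalgebras is the direct sum of the graded centers of the summands, so it is enough to note, as recorded in Section~\ref{subsec:simple-superalgebras}, that $Z(Q(W^\lambda))_{\one} = 0$ for each $\lambda \in E_n$ and $Z(\End(W^\lambda)) = Z(\End(W^\lambda))_{\zero}$ for each $\lambda \in F_n$. Hence $Z(\CS_n)_{\one} = 0$, which is the assertion. (Alternatively one can argue directly: any $z \in Z(\CS_n)_{\one}$ would be fixed by conjugation by $A_n$ and negated by conjugation by an odd permutation, and since no conjugacy class of odd permutations in $S_n$ splits on restriction to $A_n$, this forces $z = 0$.)

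For part~(2), I would first note that the center of a Lie superalgebra is a graded subspace, so it suffices to treat a homogeneous $z \in Z(\g_n)$. For each transposition $\tau$, which is odd, the identity $[z,\tau] = 0$ reads $z\tau = \tau z$ when $z$ is even and $z\tau = -\tau z$ when $z$ is odd. Because the transpositions generate $\CS_n$ as an associative algebra, in the even case $z$ commutes with all of $\CS_n$, hence $z \in Z(\CS_n)_{\zero}$, while in the odd case $z$ commutes with every even permutation and anticommutes with every odd one, hence $z \in Z(\CS_n)_{\one}$. By part~(1) the odd case is vacuous, so $Z(\g_n) \subseteq Z(\CS_n) = Z(\CS_n)_{\zero} \subseteq (\CS_n)_{\zero}$; intersecting with the graded subspace $\g_n$ gives $Z(\g_n) \subseteq \gzero$.

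For part~(3), the inclusion $\gzero \cap Z(\CA_n) \subseteq Z(\gzero)$ is immediate, since anything central in $\CA_n$ commutes with the subspace $\gzero$. For the reverse inclusion the key point is that for \emph{disjoint} transpositions $\tau,\tau'$ one has $[\tau,\tau'] = \tau\tau' + \tau'\tau = 2\tau\tau'$, so every product $(a\,b)(c\,d)$ of two disjoint transpositions lies in $\gzero$; and for $n \geq 5$ such products generate $A_n$ (two of them sharing a transposition multiply to a $3$-cycle, and every $3$-cycle is obtained this way using two further letters, which is what requires $n \geq 5$), hence generate $\CA_n$ as an algebra. Thus any $z \in Z(\gzero)$ commutes with all of $\CA_n$, so $Z(\gzero) \subseteq \gzero \cap Z(\CA_n)$. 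For the claims about $p$: the averaging formula is the standard projection of $\CA_n$ onto $Z(\CA_n)$, so $p$ is the identity on $Z(\CA_n)$, hence on $Z(\gzero)$; and conjugation by any $\sigma \in S_n$ permutes the transpositions, so it is an automorphism of $\g_n$, which for $\sigma \in A_n$ restricts to an automorphism of the Lie algebra $\gzero$ preserving both $\gzero$ and its derived subalgebra $\fD(\gzero)$. Since $p(z)$ is an average over $A_n$ of the conjugates $\sigma z \sigma^{-1}$, this yields $p(\gzero) \subseteq \gzero$ and $p(\fD(\gzero)) \subseteq \fD(\gzero)$; combining $p(\gzero) \subseteq \gzero$ with $p(\gzero) \subseteq Z(\CA_n)$ and the identity $\gzero \cap Z(\CA_n) = Z(\gzero)$, it follows that $p$ restricts to a projection $\gzero \to Z(\gzero)$.

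The main thing to be careful about is the group-theoretic input in part~(3)---that products of two disjoint transpositions generate $A_n$ exactly when $n \geq 5$, which fails for $n = 4$---and this is precisely where the hypothesis $n \geq 5$ is used; the rest is routine bookkeeping with the facts that the centers and derived subalgebras appearing here are graded and stable under conjugation by $S_n$.
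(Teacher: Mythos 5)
Your proof is correct and follows essentially the same route as the paper: part (1) via the super Artin--Wedderburn decomposition of $\CS_n$, part (2) from the fact that the transpositions generate $\g_n$ as a Lie superalgebra and $\CS_n$ as an associative superalgebra, and part (3) from the identity $[(i,j),(k,\ell)]=2(i,j)(k,\ell)$, the generation of $A_n$ (hence $\CA_n$) by products of disjoint transpositions for $n\geq 5$, and the stability of $\g_n$, $\gzero$, and $\fD(\gzero)$ under conjugation. The only differences are cosmetic (a parity case split instead of the $\ad_z$-derivation argument, and an explicit check that $p$ is idempotent on $Z(\gzero)$).
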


\begin{proof}
Under the superalgebra isomorphism of \cref{cor:CSn-as-superalgebra}, one sees that the only homo\-gen\-eous elements of $\CS_n$ that commute (in the super sense) with all other elements of $\CS_n$ correspond to linear combinations of the identity elements from the various matrix factors. In particular, $Z(\CS_n)$ is a purely even superspace.

Next, for each $z \in \CS_n$, the map $\ad_z: x \mapsto [z,x] = zx - (-1)^{\ol{z} \cdot \ol{x}} xz$ is a superalgebra derivation on $\CS_n$. If $z \in Z(\g)$, then $\ad_z(x) = 0$ for each transposition $x$, since those elements generate $\g$ as a Lie superalgebra. But the transpositions also generate $\CS_n$ as an associative superalgebra, so this implies that $\ad_z: \CS_n \to \CS_n$ is the zero map, and hence $z \in Z(\CS_n)$.

Now suppose $n \geq 5$. In this case, it is well-known that $A_n$ is generated as a group by the set
	\begin{equation} \label{eq:An-generators}
	\set{ (i,j)(k,\ell): \text{$i,j,k,\ell$ distinct}}
	\end{equation}
of all products of two disjoint transpositions. These are all elements of $\gzero$ because
	\begin{equation} \label{eq:An-generator}
	[(i,j),(k,\ell)] = (i,j)(k,\ell) + (k,\ell)(i,j) = 2  (i,j)(k,\ell)
	\end{equation}
whenever $i,j,k,\ell$ are distinct. Then reasoning as in the previous paragraph, it follows for $z \in \gzero$ that $z \in Z(\gzero)$ if and only if $z \in Z(\CA_n)$. Finally, since the set of transpositions in $S_n$ is closed under conjugation by arbitrary elements of $S_n$, it follows that $\g$ is closed under conjugation. Conjugation is an even linear map, so $\gzero$ is also closed under conjugation. Then the projection map must send elements of $\gzero$ to elements of $\gzero \cap Z(\CA_n) = Z(\gzero)$. Since $\gzero$ is closed under conjugation, it follows that $\fD(\gzero) = [\gzero,\gzero]$ is also closed under conjugation, and hence $p(\fD(\gzero)) \subseteq \fD(\gzero)$.
\end{proof}

\subsection{\texorpdfstring{Image of $\g_n$ in $\End(W^\lambda)$}{Image of gn in End(Wlambda)}} \label{S:Imageofgn}

Given $\lambda \in \Pbar(n)$, let $W^\lambda(\g_n)$ denote the image of $\g_n$ under the super\-module structure map $\CS_n \to \End(W^\lambda)$. Our goal in Sections \ref{subsec:proof-Fn} and \ref{subsec:proof-En} is to establish \cref{theorem:Wlambda(gn)}, stated below.

As described in the introduction, we prove the results in Sections \ref{S:Imageofgn}--\ref{subsec:gn-structure} by induction on $n$. First, for the base case of induction, observe that \cref{theorem:finaltheorem} is true for $n \in \set{2,3,4,5}$, by \cref{lemma:gn-equality}. This implies that \cref{theorem:Wlambda(gn)} is true for $n \in \set{2,3,4,5}$ by \eqref{eq:D(CSn)} and \eqref{eq:Wlambda(Tn)}. Hence \cref{cor:g0-reductive} and \cref{cor:Wlambda(D)} (whose proofs for a given value of $n$ rely only on the statement of \cref{theorem:Wlambda(gn)} for the same value of $n$) are true for $n$ in this range as well. In the case $n=5$, one can then work sequentially through Sections \ref{subsec:detecting-isos} and \ref{subsec:gn-structure} to deduce that all subsequent results in the paper leading up to \cref{theorem:finaltheorem} are also true for $n=5$. Now for the general inductive step of this  argument we make the following assumptions:
\begin{itemize}
	\item $n \geq 6$, and
	
	\item all results in Sections \ref{S:Imageofgn}--\ref{subsec:gn-structure} are true as stated for the value $n-1$.
\end{itemize}
The inductive step is then completed by working sequentially through Sections \ref{S:Imageofgn}--\ref{subsec:gn-structure}, starting with \cref{theorem:Wlambda(gn)}, to establish that each result is true as stated for the given value $n$.

\begin{theorem} \label{theorem:Wlambda(gn)}
Let $n \geq 2$, and let $\lambda \in \Pbar(n)$. Then
	\begin{equation} \label{eq:Wlambda(gn)}
	W^\lambda(\g_n) = \begin{cases} 
	\sq(W^\lambda) + \C \cdot (\res(\lambda) \cdot J^\lambda) & \text{if $\lambda \in E_n$,} \\
	\fsl(W^\lambda) & \text{if $\lambda \in F_n$.}
	\end{cases}
	\end{equation}
\end{theorem}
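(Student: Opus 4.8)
The inclusion ``$\subseteq$'' in \eqref{eq:Wlambda(gn)} is already in hand: combining \eqref{eq:gn-subset-derived-plus-Tn}, \eqref{eq:D(CSn)} and \eqref{eq:Wlambda(Tn)}, the image of $\g_n$ in $\End(W^\lambda)$ lies in $\sq(W^\lambda) + \C \cdot (\res(\lambda) \cdot J^\lambda)$ when $\lambda \in E_n$ and in $\fsl(W^\lambda)$ when $\lambda \in F_n$. So all the work is in the reverse inclusion, which I would prove via the grand-loop induction already set up: the base cases $n \in \set{2,3,4,5}$ follow from \cref{lemma:gn-equality}, so assume $n \geq 6$ and that the theorem (and all earlier results in the loop) holds for $n-1$.

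The backbone is restriction to $\CS_{n-1}$. Since every transposition of $S_{n-1}$ is a transposition of $S_n$ we have $\g_{n-1} \subseteq \g_n$, and by \cref{cor:Wlambda-restriction} the restriction $W^\lambda \da{\CS_{n-1}}$ splits, along the $L_n$-eigenspaces $W_k^\lambda$, into summands $W^\mu$ (together with $\Pi(W^\mu)$ when $\lambda \in E_n$ and $\mu = \mu'$) indexed by the removable boxes $\mu \prec \lambda$. The inductive hypothesis pins down each $W^\mu(\g_{n-1})$, so that, forming Lie brackets inside $\g_{n-1}$, the image $W^\lambda(\g_{n-1}) \subseteq \End(W^\lambda)$ contains a ``block-diagonal'' Lie subsuperalgebra built from these images. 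To promote this to all of $\fsl(W^\lambda)$ (resp. $\sq(W^\lambda)$) I would bring in the remaining transpositions $(i,n)$, each of which is $\CS_{n-1}$-conjugate to $s_{n-1} = (n-1,n)$. By \cref{prop:si-on-weights-super}, $s_{n-1}$ acts on the relevant weight spaces through genuinely off-diagonal blocks linking the summand indexed by $\mu$ to the one indexed by $\nu$ whenever $\lambda$ has a standard tableau with $n$ and $n-1$ in the boxes $\lambda/\mu$ and $\lambda/\nu$. Since two distinct removable boxes of $\lambda$ lie in strictly increasing rows and strictly decreasing columns, their residues differ by at least $2$, so one is always in the second case of \cref{prop:si-on-weights-super}; hence $W^\lambda(s_{n-1})$ has a nonzero component in $\Hom(W^\mu, W^\nu)$ for every ordered pair of distinct summands. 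Bracketing these ``connector'' elements against the block-diagonal subalgebra, using that each $W^\mu$ is an irreducible $\CS_{n-1}$-supermodule and invoking \cref{lem:generated-by-odd}, should sweep out the full off-diagonal $\Hom$-spaces and then, through the odd parts, the whole of $\fsl(W^\lambda)$ or $\sq(W^\lambda)$; finally $\res(\lambda) \cdot J^\lambda = W^\lambda(T_n) \in W^\lambda(\g_n)$ by \eqref{eq:Wlambda(Tn)}, supplying the last direction when $\lambda \in E_n$.

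The main obstacle is that the inductive hypothesis only controls each $W^\mu(\g_{n-1})$ one summand at a time, whereas the argument is really governed by the image of $\g_{n-1}$ acting on all of $W^\lambda \da{\CS_{n-1}}$ simultaneously; a priori this image can be a proper, ``graph-like'' subalgebra of $\prod_\mu W^\mu(\g_{n-1})$ rather than a block-diagonal sum of its factors (for instance $T_{n-1}$ acts on the $\mu$-summand as the single vector $\res(\mu) \cdot J^\mu$, so the obvious generators need not separate the various central odd directions). Supplying the rigidity needed to pin down how large the block-diagonal part really is --- that it contains $\bigoplus_\mu \fsl(W^\mu)$ (resp. $\bigoplus_\mu \sq(W^\mu)$) --- is exactly the role of the intervening results in the loop: the reductivity of $\g_0$ (\cref{cor:g0-reductive}), the description of $W^\lambda(\fD(\g_n))$ (\cref{cor:Wlambda(D)}), and the machinery for detecting which summands get identified under $\g_n$. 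The substantive work of the proof is coordinating these tools, after which the connector construction finishes it; this is done case-by-case for $\lambda \in F_n$ in Section~\ref{subsec:proof-Fn} and for $\lambda \in E_n$ in Section~\ref{subsec:proof-En}.

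Finally, I would isolate the degenerate shapes at the outset: when $\lambda$ has a single removable box (i.e.\ $\lambda$ is a rectangle, including $(n)$ and $(1^n)$), the restriction $W^\lambda \da{\CS_{n-1}}$ is already irreducible, so the inductive hypothesis for that one $\mu$ --- together with the term $\res(\lambda) \cdot J^\lambda$ when $\lambda \in E_n$ --- gives the claim immediately; the remaining low-dimensional shapes, such as $\lambda = (n-1,1)$, in which some summand $W^\mu$ is too small for \cref{lem:generated-by-odd} to apply directly, need ad hoc variants of the connector argument, with a few residual cases reducing to the base cases $n \leq 5$.
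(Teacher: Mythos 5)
Your overall skeleton matches the paper's: restriction to $\CS_{n-1}$, the $L_n$-eigenspace decomposition $W^\lambda = \bigoplus_k W^\lambda_k$, the transposition $s_{n-1}$ as a ``connector'' analyzed through \cref{prop:si-on-weights-super}, semisimplicity to capture whole summands, and \cref{lem:generated-by-odd} to pass from the odd part to everything. But there are two genuine gaps. First, the ``main obstacle'' you identify (that the image of $\g_{n-1}$ could be a graph-like subalgebra of $\prod_\mu W^\mu(\g_{n-1})$) is a non-issue, and the machinery you invoke to fix it is circular. The paper works not with $\g_{n-1}$ but with $\fD(\CS_{n-1}) \subseteq \g_{n-1}$ (which is inside $\g_{n-1}$ by the inductive hypothesis $\g_{n-1} = \fD(\CS_{n-1}) + \C T_{n-1}$): by the Artin--Wedderburn description \eqref{eq:D(CSn)} at level $n-1$, and because distinct blocks $W^\lambda_k$ share no simple $\CS_{n-1}$-constituents, its image is automatically block-diagonal, $W^\lambda(\fD(\CS_{n-1})) = \bigoplus_k W^\lambda_k(\fD(\CS_{n-1}))$ (this is \eqref{eq:gn-1-image}). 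By contrast, \cref{cor:g0-reductive}, \cref{cor:Wlambda(D)} and the detection results of Section~\ref{subsec:detecting-isos} for the value $n$ are proved \emph{from} \cref{theorem:Wlambda(gn)} for the same $n$, so appealing to them here would make the grand loop circular; and their $(n-1)$-versions are not what supplies the needed rigidity.

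Second, and more seriously, the connectors between \emph{distinct} blocks that you construct are not where the hard work lies, and ``bracketing should sweep out the full off-diagonal Hom-spaces'' skips the substantive steps. For $\lambda \in F_n$ the inductive image inside a block with $k \geq 1$ is only $\sq(W^\lambda_k)$, and one must enlarge it to $\fsl(W^\lambda_{k,\zero}) \oplus \fsl(W^\lambda_{k,\one})$: this requires the \emph{within-block} component of $W^\lambda(s_{n-1})$ coming from weights ending in $(-k,k)$, corrected by a suitable multiple of $W^\lambda_k(T_{n-1}) = \res(\mu_k)\cdot J^{\mu_k}$ and by elements of $\sq(W^\lambda_k)_{\one}$, so as to land in the multiplicity-one simple summands $\fsl(W_k)^{\pm}$ of \ref{item:sl(Wk)-pm}; your pairwise-distinct-boxes argument never produces these. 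For $\lambda \in E_n$ the case you do not address at all is the one occupying most of Section~\ref{subsec:proof-En}: when some $\mu_s \prec \lambda$ is symmetric, the block $W^\lambda_s \cong W^{\mu_s} \oplus \Pi(W^{\mu_s})$ is \emph{not} a simple $\CS_{n-1}$-supermodule (so your appeal to irreducibility of each summand fails there), the inductive image is only the diagonally embedded $\fsl(W^{\mu_s})$ of \eqref{Ws(gn-1')-image}, and one needs the case analysis $\abs{s-(-k)}=1$ versus $\abs{s-(-k)}\geq 2$ with the explicit matrices of Figures~\ref{fig:6x6-matrix} and \ref{fig:big-matrix}, the bracket $[x,[y,z]]$ of matrix units to reach $\fsl(\Wmuszero)$ and $\fsl(\Wmusone)$, and finally \cref{lemma:sq(m)-generated} to generate $\sq(W^\lambda)$ from odd matrices with zero diagonal blocks. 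Your treatment of the degenerate shapes (rectangles, $(n-1,1)$) is consistent with the paper, but the missing diagonal-block and symmetric-$\mu_s$ arguments are exactly the content of the proof, so as it stands the proposal is an outline with the essential steps unverified.
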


The ``$\subseteq$'' direction of \eqref{eq:Wlambda(gn)} follows from \eqref{eq:gn-subset-derived-plus-Tn}, \eqref{eq:D(CSn)}, and \eqref{eq:Wlambda(Tn)}. If $\lambda \in E_n$, then \eqref{eq:Wlambda(Tn)} also implies that $\res(\lambda) \cdot  J^\lambda \in W^\lambda(\g_n)$. Next, if $\tau \in S_n$ is any transposition, then
	\[
	\id_{W^\lambda} = W^\lambda(1_{\CS_n}) = W^\lambda\left( \tfrac{1}{2}[\tau,\tau] \right) \in W^\lambda(\g_n).
	\]
For $\lambda = (n)$, one has $\sq(W^\lambda) = \C \cdot \id_{W^\lambda}$, so the theorem is true in this case. For all other $\lambda \in \Pbar(n)$, \cref{lemma:lower-bound-dim} implies (by the assumption $n \geq 6$, and the fact that all $W^\lambda$ are even-dimensional) that $\dim(W^\lambda) \geq 10$. Then to finish proving the ``$\supseteq$'' direction of \eqref{theorem:Wlambda(gn)}, it suffices by \cref{lem:generated-by-odd} to show that
	\begin{equation} \label{eq:sufficient-inclusion}
	W^\lambda(\g_n) \supseteq \begin{cases}
	\sq(W^\lambda)_{\one} & \text{if $\lambda \in E_n$,} \\
	\fsl(W^\lambda)_{\one} & \text{if $\lambda \in F_n$.}
	\end{cases}
	\end{equation}

Set $\fs_n = \Lie(\CS_n)$ and $\fs_n' = \fD(\fs_n)$. By induction, one has $\g_{n-1} = \fs_{n-1}' + \C \cdot T_{n-1}$. Then
	\[
	W^\lambda(\g_n) \supseteq W^\lambda(\g_{n-1}) \supseteq W^\lambda(\fs_{n-1}').
	\]
In the notation of Section \ref{subsec:res-irred-super}, one has $W^\lambda = \bigoplus_{k \in \Z} W_k^\lambda$, with $W_k^\lambda \neq 0$ only if there exists a (unique) partition $\mu_k \prec \lambda$ such that $\res(\lambda/\mu_k) = k$. To simplify notation, for the rest of this section we will fix a partition $\lambda \in \Pbar(n)$, and we will write
	\[
	W^k = W_k^\lambda.
	\]
By \cref{prop:Wklambda-restriction}, if $W^k \neq 0$, then $W^k$ identifies as a $\CS_{n-1}$-supermodule with either the simple supermodule $W^{\mu_k}$, or the direct sum of $W^{\mu_k}$ and its parity shift $\Pi(W^{\mu_k})$. In any case, if $k \neq \ell$, then $W^k$ and $W^\ell$ have no simple $\CS_{n-1}$-supermodule constituents in common. This implies by the analogue of \eqref{eq:D(CSn)} for $\CS_{n-1}$ that
	\begin{equation} \label{eq:gn-1-image}
	W^\lambda(\fs_{n-1}') = \bigoplus_{k \in \Z} W^k(\fs_{n-1}'),
	\end{equation}
where $W^k(\fs_{n-1}')$ denotes the image of $\fs_{n-1}'$ in $\End(W^k)$.

Conceptually, our strategy for the proof of \cref{theorem:Wlambda(gn)} runs roughly as follows. First, we show that $W^\lambda(\g_n)$ contains a large semisimple Lie subalgebra $\fh$---specifically, a direct sum of special linear Lie algebras---over which
	\begin{align} 
	\End(W^\lambda)_{\one} &= \bigoplus_{k,\ell \in \Z} \Hom(W^k,W^\ell)_{\one} \label{eq:End(Wlambda)-odd-decomp} \\
	&= \bigoplus_{k,\ell \in \Z} \left[ \Hom(W^k_{\zero},W^\ell_{\one}) \oplus \Hom(W^k_{\one},W^\ell_{\zero}) \right] \label{eq:End(Wlambda)-odd-decomp-finer}
	\end{align}
is a semisimple $\fh$-module. Next, the transposition $s_{n-1} = (n-1,n)$ defines an element $W^\lambda(s_{n-1}) \in \End(W^\lambda)_{\one}$ that has nonzero components in various simple $\fh$-module  summands of $\End(W^\lambda)_{\one}$. Using the semisimplicity of $\fh$, we deduce that certain $\fh$-module summands of $\End(W^\lambda)_{\one}$ must be contained in the Lie superalgebra generated by $W^\lambda(\fs_{n-1}')$ and $W^\lambda(s_{n-1})$, and hence must be contained in $W^\lambda(\g_n)$. These summands in turn generate a large enough Lie superalgebra for us to deduce the inclusion \eqref{eq:sufficient-inclusion}.

\subsection{Proof of Theorem \ref{theorem:Wlambda(gn)}: the case \texorpdfstring{$\lambda \in F_n$}{lambda in Fn}} \label{subsec:proof-Fn}

\subsubsection{}

First suppose $\lambda \in F_n$. Then $W^k \neq 0$ only if $k \geq 0$, and \eqref{eq:gn-1-image} takes the form
	\begin{equation} \label{eq:gn-1-image-Fn}
	W^\lambda(\fs_{n-1}') = \bigoplus_{k \geq 0} W^k(\fs_{n-1}') = \fsl(W^0) \oplus \sq(W^1) \oplus \sq(W^2) \oplus \cdots.
	\end{equation}
The summand indexed by $k=0$ is of the form $\fsl(W^0)$ because if $\mu_0 \prec \lambda$ and $\res(\lambda/\mu_0) = 0$, then $\mu_0$ must be symmetric. Similarly, the summands indexed by integers $k > 0$ are of the form $\sq(W^k)$ because if $\mu_k \prec \lambda$ and $\res(\lambda/\mu_k) = k > 0$, then $\mu_k$ must be non-symmetric. By definition, a summand is zero if $W^k = 0$. Since $n \geq 6$, \cref{lemma:lower-bound-dim} implies that if $W^k \neq 0$, then
	\begin{equation} \label{eq:Wk-dim-geq-5}
	\dim(W^k) \geq \min\set{ 2(n-1)-2, (n-1)+1} \geq 6.
	\end{equation}

\subsubsection{} \label{subsubsec:subalgebra-f}

If $\fsl(W^0)$ is the only nonzero summand in \eqref{eq:gn-1-image-Fn}, then $W^\lambda = W^0$, and hence
	\[
	\fsl(W^\lambda) = \fsl(W^0) = W^\lambda(\fs_{n-1}') \subseteq W^\lambda(\g_n),
	\]
establishing \eqref{eq:sufficient-inclusion}. So assume that $W^k \neq 0$ for at least one value $k > 0$. For such $k$, $W^k \cong W^{\mu_k}$ is a Type Q simple $\CS_{n-1}$-supermodule, and the even and odd subspaces $W^k_{\zero}$ and $W^k_{\one}$ of $W^k$ can be identified via the odd involution $J^{\mu_k} : W^{\mu_k} \to W^{\mu_k}$. Making the identification $W^k_{\zero} \simeq W^k_{\one}$ via $J^{\mu_k}$, and writing $W_k$ for this new common space (considered just as an ordinary vector space, without any superspace structure), the diagonal maps
	\begin{equation} \label{eq:Type-Q-diagonal-maps}
	\gl(W_k) \to \gl(W^k_{\zero}) \oplus \gl(W^k_{\one}) \quad \text{and} \quad \fsl(W_k) \to \Hom(W^k_{\zero},W^k_{\one}) \oplus \Hom(W^k_{\one},W^k_{\zero})
	\end{equation}
induce vector space isomorphisms $\gl(W_k) \cong \sq(W^k)_{\zero}$ and $\fsl(W_k) \cong \sq(W^k)_{\one}$ that are compatible with the adjoint action. At the risk of confusing the reader, we will immediately change the meaning of our notation and will write $\fsl(W_k)$ to mean the evident Lie subalgebra of $\gl(W_k) \cong \sq(W^k)_{\zero}$. With this notation, we see that
	\[
	\ff := [\fsl(W^0_{\zero}) \oplus \fsl(W^0_{\one})] \oplus \fsl(W_1) \oplus \fsl(W_2) \oplus \cdots \oplus \fsl(W_{n-1})
	\]
naturally identifies with a semisimple Lie subalgebra of $W^\lambda(\fs_{n-1}')_{\zero} \subseteq W^\lambda(\g_n)$.\footnote{Since $\dim(W^k) \geq 6$ whenever $W^k \neq 0$, the nonzero summands in $\ff$ are each of the form $\fsl(m)$ for some $m \geq 3$.} Further, \eqref{eq:End(Wlambda)-odd-decomp} and \eqref{eq:End(Wlambda)-odd-decomp-finer} give $\ff$-module decompositions of $\End(W^\lambda)_{\one}$ under the adjoint action.

\subsubsection{}

We will write elements of $\End(W^k)$ in the supermatrix block form
	\begin{equation} \label{eq:block-matrix}
	\left[ \begin{array}{c|c} A & B \\ \hline C & D \end{array} \right]
	\end{equation}
where $A \in \Hom(W^k_{\zero},W^k_{\zero})$, $B \in \Hom(W^k_{\one},W^k_{\zero})$, $C \in \Hom(W^k_{\zero},W^k_{\one})$, and $D \in \Hom(W^k_{\one},W^k_{\one})$. In this notation, the inclusion $\sq(W^k) \subseteq W^\lambda(\g_n)$ for $k \geq 1$ translates into the statement that $W^\lambda(\g_n)$ contains all supermatrices such that $A = D$, $B = C$, and $\tr(B) = 0$, while the summand $\fsl(W_k)$ of the algebra $\ff$ identifies with those supermatrices such that $A = D$, $B = C = 0$, and $\tr(A) = 0$. For each $k \geq 1$, one sees that, as an $\ff$-module, $\End(W^k)_{\one}$ is the direct sum of:
	\begin{enumerate}[label=(\thesubsection.\arabic*)]
	\setcounter{enumi}{\value{equation}}
	\item a two-dimensional trivial $\ff$-submodule, spanned by the odd supermatrices\footnote{A supermatrix of the form \eqref{eq:block-matrix} is even if $B = C = 0$, and is odd if $A = D = 0$.} of the form \eqref{eq:block-matrix} such that $B$ and $C$ are arbitrary scalar matrices; and
	\item \label{item:sl(Wk)-pm} two nontrivial isomorphic simple $\ff$-modules $\fsl(W_k)^+$ and $\fsl(W_k)^-$, spanned by the odd supermatrices of the form \eqref{eq:block-matrix} such that $\tr(B) = 0$ and $C = 0$, and such that $B = 0$ and $\tr(C) = 0$, respectively. Via the projection $\ff \twoheadrightarrow \fsl(W_k)$, these simples each identify with the adjoint representation of $\fsl(W_k)$. Moreover, these two nontrivial simples do not occur in any other summands in \eqref{eq:End(Wlambda)-odd-decomp}.
	\setcounter{equation}{\value{enumi}}
	\end{enumerate}

\subsubsection{} \label{subsubsec:Fn-sl-summands}

Our first main goal is to show that $W^\lambda(\g_n)$ contains the semisimple Lie algebra
	\begin{equation} \label{eq:Lie-algebra-h}
	\fh := [ \fsl(W^0_{\zero}) \oplus \fsl(W^0_{\one}) ] \oplus [ \fsl(W^1_{\zero}) \oplus \fsl(W^1_{\one}) ] \oplus \cdots \oplus [ \fsl(W^{n-1}_{\zero}) \oplus \fsl(W^{n-1}_{\one}) ].
	\end{equation}
Given $k \geq 1$ such that $W^k \neq 0$, we will show that $[\fsl(W^k_{\zero}) \oplus \fsl(W^k_{\one})] \subseteq W^\lambda(\g_n)$ by considering the component of the map $W^\lambda(s_{n-1})$ that lies in the summand $\End(W^k)_{\one}$ of \eqref{eq:End(Wlambda)-odd-decomp}.

By definition, $W^k$ is spanned by the vectors $v_{\alpha}^+$ and $v_{\alpha}^-$ for $\alpha \in \Wbar(\lambda)$ of the form $\alpha = (\cdots, j,k)$. If $\abs{k-j} = 1$, then $W^\lambda(s_{n-1})$ acts on the vectors $v_\alpha^+$ and $v_\alpha^-$ via the matrix given in \cref{prop:si-on-weights-super}\eqref{item:si-super-trivial-action}. If $\abs{k-j} \geq 2$ and $j \neq -k$, then $W^\lambda(s_{n-1})$ maps the vectors $v_\alpha^+$ and $v_\alpha^-$ into the subspace $W^{\abs{j}} = W^\lambda_{\abs{j}}$, and $W^{\abs{j}} \neq W^k$. Thus, if $\abs{k-j} \geq 2$ and $j \neq -k$, then the action of $W^\lambda(s_{n-1})$ on $v_\alpha^+$ and $v_\alpha^-$ does not arise from a map in $\End(W^k)_{\one}$.

Next consider a weight in $\Wbar(\lambda)$ of the form $\alpha = (\alpha^\star,-k,k) = (\alpha_1,\ldots,\alpha_{n-2},-k,k)$. Weights of this form do exist: If the Young diagram of $\lambda$ has a removable box $B_k$ of residue $k \geq 1$, then by symmetry of $\lambda$ it also has a removable box $B_{-k}$ of residue $-k$, and the two boxes can be removed in either order to produce a symmetric partition $\lambda^\star$ of $n-2$. Let $T^\star$ be any standard $\lambda^\star$-tableau, and let $\alpha^\star = \alpha(T^\star) \in \calW(\lambda^\star)$ be the weight of $T^\star$. We can extend $T^\star$ to a standard $\lambda$-tableau in two ways: by putting $n-1$ in box $B_{-k}$ and $n$ in box $B_k$, to get a standard $\lambda$-tableau $T_k$ of weight $(\alpha^\star,-k,k)$, or by putting $n-1$ in box $B_k$ and $n$ in box $B_{-k}$ to get a standard $\lambda$-tableau $T_{-k}$ of weight $(\alpha^\star,k,-k)$. Every weight in $\Wbar(\lambda)$ of the form $(\alpha^\star,-k,k)$ arises in this way.

Now if $\alpha = (\alpha^\star,-k,k)$ is a weight in $\Wbar(\lambda)$, then $\gamma := (-\alpha^\star,-k,k)$ is also a weight in $\Wbar(\lambda)$ (because $\Wbar(\lambda^*)$ includes the negatives of each of its weights), and $\alpha,-\alpha,\gamma,-\gamma$ are four distinct weights in $\calW(\lambda)$. Let $\beta = -\gamma$, let $c = (k-(-k))^{-1} = 1/(2k)$, and let $w_\beta = (s_{n-1}-c) \cdot v_\alpha$ and $w_{-\beta} = (s_{n-1}+c) \cdot v_{-\alpha}$ be defined as in \cref{prop:si-on-weights-super}\eqref{item:si-super-nontrivial-action}. Then $w_\beta = a \cdot v_{-\gamma}$ for some $0 \neq a \in \C$, hence $w_{-\beta} = -\phi^\lambda(w_\beta) = -a \cdot v_{\gamma}$,
	\begin{align*}
	w_\beta^+ &= \tfrac{1}{2}(w_\beta - w_{-\beta}) = a \cdot \tfrac{1}{2}(v_{-\gamma} + v_\gamma) = a \cdot v_\gamma^+, \quad \text{and} \\
	w_\beta^- &= \tfrac{1}{2}(w_\beta + w_{-\beta}) = a \cdot \tfrac{1}{2}(v_{-\gamma} - v_\gamma) = -a \cdot v_\gamma^-.
	\end{align*}
By \cref{prop:si-on-weights-super}\eqref{item:si-super-nontrivial-action}, $W^\lambda(s_{n-1})$ leaves invariant the span of $v_\alpha^+, v_\gamma^+, v_\alpha^-, v_\gamma^-$, and acts in this homo\-geneous basis via the supermatrix
		\[
		\left[
			\begin{array}{cc|cc}
			0 & 0 & c & -(1-c^2)/a \\
			0 & 0 & a & c \\
			\hline
			c & (1-c^2)/a & 0 & 0 \\
			-a & c & 0 & 0
			\end{array}
			\right].
		\]
Combined with the observations two paragraphs ago, this implies that the component of $W^\lambda(s_{n-1})$ in $\End(W^k)_{\one}$ can be written as an odd supermatrix of the form \eqref{eq:block-matrix} such that $B \neq C$, but each pair of corresponding diagonal entries in $B$ and $C$ are equal.

Since $k \geq 1$, the partition $\mu_k \prec \lambda$ is not symmetric. Then by \eqref{eq:Wlambda(Tn)}, $W^k(T_{n-1}) = \res(\mu_k) \cdot J^{\mu_k}$, and $\res(\mu_k) = \res(\lambda) - k = -k \neq 0$. Now it follows that for some scalar $r$, the component in $\End(W^k)_{\one}$ of the operator $W^k(s_{n-1}-r \cdot T_{n-1}) = W^k(s_{n-1}) - r \cdot W^k(T_{n-1})$ has the form
	\[
	\left[ \begin{array}{c|c} 0 & B' \\ \hline C' & 0 \end{array} \right]
	\]
where $B'$ and $C'$ are nonzero traceless matrices. Recall that $\sq(W^k)_{\one} \subseteq W^\lambda(\g_n)$ consists of all supermatrices $\sm{0 & X \\ X & 0}$ such that $\tr(X) = 0$. Then $\phi := W^k(s_{n-1}-r \cdot T_{n-1}) - \sm{0 & C' \\ C' & 0}$ is an odd element of $W^\lambda(\g_n)$ whose component in $\End(W^k)_{\one}$ is a nonzero element of the $\ff$-module $\fsl(W_k)^+$ described in \ref{item:sl(Wk)-pm}. Since $\fsl(W_k)^+$ does not occur in any other summand of \eqref{eq:End(Wlambda)-odd-decomp}, the semisimplicity of the Lie algebra $\ff$ implies that the entire module $\fsl(W_k)^+$ must be contained in the $\ff$-submodule of $W^\lambda(\g_n)$ generated by $\phi$, i.e., $\fsl(W_k)^+ \subseteq W^\lambda(\g_n)$. Similarly, one gets $\fsl(W_k)^- \subseteq W^\lambda(\g_n)$. Now using the observation just after \eqref{eq:Wk-dim-geq-5} that $\dim(W^k) \geq 6$, and hence $\dim(W^k_{\zero}) = \dim(W^k_{\one}) \geq 3$, one can show that the Lie superalgebra generated by the subspaces $\fsl(W_k)^+$ and $\fsl(W_k)^-$ of $\End(W^k)$ must contain nonzero elements $x \in \fsl(W^k_{\zero})$ and $y \in \fsl(W^k_{\one})$. Then the Lie algebra generated by $x$, $y$, and $\gl(W_k) = \sq(W^k)_{\zero}$ must contain $\fsl(W^k_{\zero}) \oplus \fsl(W^k_{\one})$. Thus, $[\fsl(W^k_{\zero}) \oplus \fsl(W^k_{\one})] \subseteq W^\lambda(\g_n)$.

\subsubsection{}

We have shown that the semisimple Lie algebra $\fh$ of \eqref{eq:Lie-algebra-h} is contained in $W^\lambda(\g_n)$. Next observe that \eqref{eq:End(Wlambda)-odd-decomp-finer} is a multiplicity-free decomposition of $\End(W^\lambda)_{\one}$ into simple $\fh$-modules. In particular, each simple summand is equal to its own isotypical component. Together with the semisimplicity of $\fh$, this implies that if $\psi \in \End(W^\lambda)_{\one}$, and if $\psi$ has a nonzero component in some simple $\fh$-module summand of $\End(W^\lambda)_{\one}$, then the entire summand in question must be contained in the $\fh$-submodule of $\End(W^\lambda)_{\one}$ generated by $\psi$. By induction, we already know that $\End(W^0)_{\one} \subseteq W^\lambda(\g_n)$. Then to show that $\End(W^\lambda)_{\one} \subseteq W^\lambda(\g_n)$, it suffices to show for all $k,\ell \in \Z$ for which $W^k \neq 0$ and $W^\ell \neq 0$, and for which at least one of $k$ or $\ell$ is nonzero, that $W^\lambda(s_{n-1})$ has nonzero components in both $\Hom(W^k_{\zero},W^\ell_{\one})$ and $\Hom(W^k_{\one},W^\ell_{\zero})$. We have already established this is true when $k = \ell \geq 1$, so we may assume that $k \neq \ell$.

If $W^k \neq 0$ and $W^\ell \neq 0$, then the Young diagram of $\lambda$ has removable boxes $B_k$ and $B_\ell$ of residues $k$ and $\ell$, respectively, and these boxes can be removed in either order. Moreover, since $B_k$ and $B_\ell$ are both removable, it must be the case that $\abs{k-\ell} \geq 2$. Now reasoning as we did earlier, one can deduce that $\Wbar(\lambda)$ contains a pair of weights of the forms $\alpha = (\alpha^\star,\ell,k)$ and $\beta = (\alpha^\star,k,\ell)$. Finally, applying \cref{prop:si-on-weights-super}\eqref{item:si-super-nontrivial-action}, one sees that $W^\lambda(s_{n-1})$ has nonzero components in each of $\Hom(W^k_{\zero},W^\ell_{\one})$, $\Hom(W^k_{\one},W^\ell_{\zero})$, $\Hom(W^\ell_{\zero},W^k_{\one})$, and $\Hom(W^\ell_{\one},W^k_{\zero})$.

\subsection{Proof of Theorem \ref{theorem:Wlambda(gn)}: the case \texorpdfstring{$\lambda \in E_n$}{lambda in En}} \label{subsec:proof-En}

\subsubsection{}

Now suppose $\lambda \in E_n$. In this case one has $W^k \neq 0$ only if $\abs{k} < n$. If there is only one summand in the decomposition $W^\lambda = \bigoplus_{k \in \Z} W^k$, i.e., if $\lambda$ has only one removable box, say $B_\ell$ of residue $\ell$, then the Young diagram of $\lambda$ must be a (non-symmetric) rectangle with $B_\ell$ in its outer corner, and hence the partition $\mu_\ell$ obtained by removing $B_\ell$ must also be non-symmetric. Then by induction, $W^\lambda \cong W^{\mu_\ell}$ as a $\CS_{n-1}$-supermodule, and
	\[
	\sq(W^\lambda) = \sq(W^{\mu_\ell}) = W^{\mu_\ell}(\fs_{n-1}') = W^\lambda(\fs_{n-1}') \subseteq W^\lambda(\g_n),
	\]
establishing \eqref{eq:sufficient-inclusion}. So assume that $W^k \neq 0$ for more than one value of $k$.

\subsubsection{}

Since $\lambda$ is not symmetric, there is at most one value of $k$ such that the partition $\mu_k \prec \lambda$ is symmetric; call this value $s$ (if it exists). If $s$ exists, then $s \neq 0$, because $\lambda$ is not symmetric.

By the induction hypothesis,
	\begin{equation} \label{eq:gn-1-image-En}
	W^\lambda(\fs_{n-1}') = W^s(\fs_{n-1}') \oplus \bigoplus_{k \neq s} \sq(W^k),
	\end{equation}
where, by definition, if a symmetric partition $\mu_s \prec \lambda$ does not exist then $W^s = 0$ and the first summand is omitted. The supermodule $W^\lambda$ is equipped with the odd involution $J = J^\lambda : W^\lambda \to W^\lambda$, which restricts for each $k \in \Z$ to an odd involution $J^k : W^k \to W^k$. As in Section \ref{subsec:proof-Fn}, we make the identification $W^k_{\zero} \simeq W^k_{\one}$ via $J^k$ and write $W_k$ for the common identified space. Then for $k \neq s$ one has $\gl(W_k) \cong \sq(W^k)_{\zero}$ as in \eqref{eq:Type-Q-diagonal-maps}. For $k=s$, we see from \eqref{eq:Wmu-identification} and \eqref{eq:Pi(Wmu)-identification} that $J^s$ defines an odd isomorphism $W^{\mu_s} \simeq \Pi(W^{\mu_s})$. Then conjugation by $J^s$, $\phi \mapsto J^s \circ \phi \circ J^s$, defines an even isomorphism $\End(W^{\mu_s}) \cong \End(\Pi(W^{\mu_s}))$, and $W^s(\fs_{n-1}')$ is the image in $\End(W^s)$ of the diagonal map
	\begin{equation} \label{Ws(gn-1')-image}
	\fsl(W^{\mu_s}) \to \End(W^{\mu_s}) \oplus \End(\Pi(W^{\mu_s})).
	\end{equation}
Make the identifications $W^{\mu_s} \simeq \Pi(W^{\mu_s})$, $W^{\mu_s}_{\zero} \simeq \Pi(W^{\mu_s}_{\zero})$, and $W^{\mu_s}_{\one} \simeq \Pi(W^{\mu_s}_{\one})$ via $J^s$, and write $\Wmus$, $\Wmuszero$, and $\Wmusone$ for the common identified spaces, respectively (considered just as ordinary vector spaces, without any superspace structures).


\begin{remark} \label{remark:sl(1)=0}
It could happen that
	\begin{enumerate}
	\item \label{item:slbar-zero} $\Wmuszero \cong \Wmusone \neq 0$, but $\fsl(\Wmuszero) \cong \fsl(\Wmusone) = 0$; or that
	\item \label{item:slWk-zero} $W_k \neq 0$ for some $k \neq s$, but $\fsl(W_k) = 0$.
	\end{enumerate}
These situations occur if and only if $\dim(W^{\mu_s}) = 2$ or $\dim(W^{\mu_k}) = 2$, respectively. \cref{lemma:lower-bound-dim} implies for $n \geq 6$ that situation \eqref{item:slbar-zero} cannot occur, and implies that up to the equivalence $\lambda \sim \lambda'$, situation \eqref{item:slWk-zero} occurs only if $\lambda = (n-1,1)$. In this case, $W^{(n-1,1)} = W^{(n-1,1)}_{n-2} \oplus W^{(n-1,1)}_{-1}$, and one has $\CS_{n-1}$-super\-module isomorphisms
	\begin{align*}
	W^{(n-1,1)}_{n-2} &\cong W^{(n-2,1)}, & \dim(W^{(n-2,1)}) &= 2(n-2), \\
	W^{(n-1,1)}_{-1} &\cong W^{(n-1)}, & \dim(W^{(n-1)}) &= 2.
	\end{align*}
In any event, for $k,\ell \neq s$ the space $\Hom(W_k,W_\ell)$ remains a simple $\fsl(W_k) \oplus \fsl(W_\ell)$-module even if one of $W_k$ or $W_\ell$ is one-dimensional (hence even if one of $\fsl(W_k)$ or $\fsl(W_\ell)$ is zero).
\end{remark}

\subsubsection{}

Now from \eqref{eq:gn-1-image-En}, we see that the semisimple Lie algebra
	\[
	\fh := \left[ \fsl(\Wmuszero) \oplus \fsl(\Wmusone) \right] \oplus \bigoplus_{k \neq s} \fsl(W_k)
	\]
identifies with a subalgebra of $W^\lambda(\fs_{n-1}')_{\zero} \subseteq W^\lambda(\g_n)$. Further, \eqref{eq:End(Wlambda)-odd-decomp} and \eqref{eq:End(Wlambda)-odd-decomp-finer} give $\fh$-module decompositions of $\End(W^\lambda)_{\one}$ under the adjoint action. The set $W^\lambda(\g_n)$ is contained in
	\[
	\fq(W^\lambda) = \End(W^\lambda)^J := \smallset{ \theta \in \End(W^\lambda) : J \circ \theta \circ J = \theta},
	\]
and the decomposition \eqref{eq:End(Wlambda)-odd-decomp} gives rise to the corresponding decomposition of $J$-invariants
	\begin{equation}
	\End(W^\lambda)^J_{\one} = \bigoplus_{k,\ell \in \Z} \Hom(W^k,W^\ell)^J_{\one}.
	\end{equation}
For $s \notin \set{k,\ell}$, one sees that the diagonal map
	\[
	\Hom(W_k,W_\ell) \to \Hom(W^k_{\zero},W^\ell_{\one}) \oplus \Hom(W^k_{\one},W^\ell_{\zero})
	\]
induces an $\fh$-module isomorphism $\Hom(W_k,W_\ell) \cong \Hom(W^k,W^\ell)^J_{\one}$. Similarly, for $k \neq s$ one sees that the diagonal maps
	\begin{align*}
	\Hom(\Wmuszero,W_k) &\to \Hom(W^{\mu_s}_{\zero},W^k_{\one}) \oplus \Hom(\Pi(W^{\mu_s}_{\zero}),W^k_{\zero}), \quad \text{and} \\
	\Hom(\Wmusone,W_k) &\to \Hom(W^{\mu_s}_{\one},W^k_{\zero}) \oplus \Hom(\Pi(W^{\mu_s}_{\one}),W^k_{\one})
	\end{align*}
induce an $\fh$-module isomorphism
	\[
	\Hom(\Wmuszero,W_k) \oplus \Hom(\Wmusone,W_k) \cong \Hom(W^s,W^k)^J_{\one}.
	\]
An analogous description holds for $\Hom(W^k,W^s)^J_{\one}$. Finally, as an $\fh$-module,
	\begin{equation*}
	\begin{split}
	\Hom(W^s,W^s)^J_{\one} \cong& \Hom(\Wmuszero,\Wmusone) \oplus \Hom(\Wmusone,\Wmuszero) \\
	&\oplus \Hom(\Wmuszero,\Wmuszero) \oplus \Hom(\Wmusone,\Wmusone),
	\end{split}
	\end{equation*}
where the summands on the right side of the isomorphism are identified with the images of the corresponding diagonal maps
	\begin{align}
	\Hom(\Wmuszero,\Wmusone) &\to \Hom(W^{\mu_s}_{\zero},W^{\mu_s}_{\one}) \oplus \Hom(\Pi(W^{\mu_s}_{\zero}),\Pi(W^{\mu_s}_{\one})), \\
	\Hom(\Wmusone,\Wmuszero) &\to \Hom(W^{\mu_s}_{\one},W^{\mu_s}_{\zero}) \oplus \Hom(\Pi(W^{\mu_s}_{\one}),\Pi(W^{\mu_s}_{\zero})), \\
	\Hom(\Wmuszero,\Wmuszero) &\to  \Hom(W^{\mu_s}_{\zero},\Pi(W^{\mu_s}_{\zero})) \oplus \Hom(\Pi(W^{\mu_s}_{\zero}),W^{\mu_s}_{\zero}), \label{End(Wmuszero)-diagonal} \\
	\Hom(\Wmusone,\Wmusone) &\to  \Hom(W^{\mu_s}_{\one},\Pi(W^{\mu_s}_{\one})) \oplus \Hom(\Pi(W^{\mu_s}_{\one}),W^{\mu_s}_{\one}). \label{End(Wmusone)-diagonal}
	\end{align}

\subsubsection{Simple constituents} \label{subsubsec:End(Wlambda)Jone-irreds}

Altogether, $\End(W^\lambda)^J_{\one}$ admits the $\fh$-module decomposition
	\begin{equation} \label{eq:End(Wlambda)Jone}
	\begin{split}
	&\End(W^\lambda)^J_{\one} \cong \Big[ \bigoplus_{s \notin \set{k,\ell}} \Hom(W_k,W_\ell) \Big] \\
	&\oplus \Big[ \bigoplus_{k \neq s} \Hom(\Wmuszero,W_k) \oplus \Hom(\Wmusone,W_k) \oplus \Hom(W_k,\Wmuszero) \oplus \Hom(W_k,\Wmusone) \Big] \\
	&\oplus \Big[ \Hom(\Wmuszero,\Wmusone) \oplus \Hom(\Wmusone,\Wmuszero) \oplus \End(\Wmuszero) \oplus \End(\Wmusone) \Big].
	\end{split}
	\end{equation}
For $k \neq s$, the term $\End(W_k)$ in \eqref{eq:End(Wlambda)Jone} is either simply a one-dimensional trivial $\fh$-module, if $\dim(W_k)=1$, or else is the direct sum of a one-dimensional trivial $\fh$-module and a copy of the adjoint module for $\fsl(W_k)$, the latter of which is contained in $W^\lambda(\fs_{n-1}')_{\one} \subseteq W^\lambda(\g_n)$ by \eqref{eq:gn-1-image-En}. By \eqref{Ws(gn-1')-image}, the summands $\Hom(\Wmuszero,\Wmusone)$ and $\Hom(\Wmusone,\Wmuszero)$ are also contained in $W^\lambda(\g_n)$. The summands $\End(\Wmuszero)$ and $\End(\Wmusone)$ in \eqref{eq:End(Wlambda)Jone} are each direct sums of a one-dimensional trivial $\fh$-module and a copy of the adjoint representation for $\fsl(\Wmuszero)$ and $\fsl(\Wmusone)$, respectively. The remaining nonzero summands in \eqref{eq:End(Wlambda)Jone} are each nontrivial simple $\fh$-modules. Overall, the non-trivial simple $\fh$-modules that occur in $\End(W^\lambda)^J_{\one}$ each do so with multiplicity one.  

\subsubsection{}

If $k$, $\ell$, and $s$ are distinct, and if $W^k$ and $W^\ell$ are both nonzero, then one can argue as in the last two paragraphs of Section \ref{subsec:proof-Fn} to show first that $W^\lambda(s_{n-1})$ has nonzero components in the simple $\fh$-module summands $\Hom(W^k,W^\ell)^J_{\one}$ and $\Hom(W^\ell,W^k)^J_{\one}$ of $\End(W^\lambda)^J_{\one}$, and then to deduce that these summands must both be contained in $W^\lambda(\g_n)$.

\subsubsection{The case \texorpdfstring{$W^s = 0$}{Ws = 0}} \label{subsubsec:Ws=0}

If $W^s = 0$, then the previous paragraph together with our observations in Section \ref{subsubsec:End(Wlambda)Jone-irreds} imply that each non-trivial $\fh$-module constituent of $\End(W^\lambda)^J_{\one}$ is contained in $W^\lambda(\g_n)$. Identifying $\sq(W^\lambda)$ with supermatrices as in \eqref{eq:sq(n)-supermatrices}, it implies that
	\begin{equation} \label{eq:odd-no-diagonal}
	\set{ \left[ \begin{array}{c|c} 0 & B \\ \hline B & 0 \end{array} \right] : \text{the diagonal entries of $B$ are all zero} }
	\end{equation}
is contained in $W^\lambda(\g_n)$. More precisely, the direct sum decomposition \eqref{eq:End(Wlambda)Jone} induces a block decomposition of the matrix $B$ such that the diagonal blocks correspond to $\oplus_{k \in \Z} \End(W_k)$, and we deduce that $W^\lambda(\g_n)$ contains the (larger) set of all matrices of the form $\sm{0 & B \\ B & 0}$ such that these diagonal blocks each individually have trace zero. By \cref{lemma:lower-bound-dim} and the assumption that $W^k \neq 0$ for more than one value of $k$, we have $\dim(W^\lambda) \geq 2n-2 \geq 10$. Then the inclusion $\sq(W^\lambda) \subseteq W^\lambda(\g_n)$ in the case where $W^s = 0$ is obtained from the following lemma:

\begin{lemma} \label{lemma:sq(m)-generated}
If $m \geq 5$, then $\sq(m)$ is generated as a Lie superalgebra by the identity matrix $I_{m|m}$ and the set \eqref{eq:odd-no-diagonal}.
\end{lemma}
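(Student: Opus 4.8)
The plan is to prove the nontrivial inclusion. Write $\mathfrak{k}$ for the Lie superalgebra generated by $I_{m|m}$ together with the set \eqref{eq:odd-no-diagonal}. The containment $\mathfrak{k} \subseteq \sq(m)$ is immediate, since each generator lies in $\sq(m)$ and $\sq(m)$ is a Lie subsuperalgebra of $\fq(m)$, so it suffices to show $\sq(m) \subseteq \mathfrak{k}$. It is convenient to identify $\sq(m)_{\zero}$ with $\gl_m$ via $\sm{A & 0 \\ 0 & A} \leftrightarrow A$, and $\sq(m)_{\one}$ with the space of traceless $m \times m$ matrices via $\sm{0 & B \\ B & 0} \leftrightarrow B$; under these identifications the odd generators from \eqref{eq:odd-no-diagonal} correspond exactly to the matrices $B$ with vanishing diagonal. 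The two bracket formulas one needs are $[\sm{0 & B \\ B & 0}, \sm{0 & C \\ C & 0}] = \sm{BC+CB & 0 \\ 0 & BC+CB}$ (the super bracket of odd elements is an anticommutator) and $[\sm{A & 0 \\ 0 & A}, \sm{0 & B \\ B & 0}] = \sm{0 & [A,B] \\ [A,B] & 0}$; that is, under the identifications above, these produce $BC+CB \in \gl_m$ and $[A,B]$ in the traceless matrices, respectively.

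First I would recover the even part $\sq(m)_{\zero} = \gl_m$. For pairwise distinct indices $i,j,k$ — available because $m \geq 3$ — the anticommutator of the generators corresponding to $E_{ij}$ and $E_{jk}$ equals $E_{ik}$ (the other term $E_{jk}E_{ij}$ vanishes), which puts every off-diagonal matrix unit $E_{ik}$ into $\mathfrak{k}_{\zero}$; and the anticommutator of the generators corresponding to $E_{ij}$ and $E_{ji}$ equals $E_{ii}+E_{jj}$. Since $m \geq 3$, the elements $E_{ii}+E_{jj}$ together with the off-diagonal units already span $\gl_m$ (indeed $\tfrac12[(E_{ii}+E_{jj})+(E_{ii}+E_{kk})-(E_{jj}+E_{kk})] = E_{ii}$), so $\mathfrak{k}_{\zero} = \gl_m$. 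Then I would recover the odd part: bracketing $E_{ij} \in \mathfrak{k}_{\zero}$ against the generator corresponding to $E_{ji}$ gives $[E_{ij},E_{ji}] = E_{ii}-E_{jj}$, so $\mathfrak{k}_{\one}$ contains every traceless diagonal matrix. Combined with the zero-diagonal generators, and the fact that an arbitrary traceless matrix is the sum of its (zero-diagonal) off-diagonal part and its (traceless) diagonal part, this gives $\mathfrak{k}_{\one} = \sq(m)_{\one}$. Hence $\mathfrak{k} = \sq(m)$.

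There is no serious obstacle here — the whole argument is a short explicit computation with matrix units — so the only point to watch is the role of the hypothesis. Everything above uses only $m \geq 3$ (the extra margin in the stated bound $m \geq 5$ is harmless), and $m \geq 3$ is genuinely necessary: for $m = 2$ there are no ``third indices'', the only even element one can manufacture from anticommutators of the odd generators is a multiple of $I_{2|2}$, and the generated superalgebra is strictly smaller than $\sq(2)$. The generator $I_{m|m}$ is in fact redundant once $m \geq 3$ — it is central in $\fq(m)$ and contributes nothing via brackets — but there is no harm in keeping it in the generating set.
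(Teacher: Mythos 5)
Your proof is correct and follows essentially the same route as the paper's: recover the even part $\gl_m \cong \sq(m)_{\zero}$ from anticommutators of the zero-diagonal odd generators, then fill in the odd part. The only cosmetic difference is that the paper finishes by citing irreducibility of $\sq(m)_{\one}$ under the adjoint action of $\sq(m)_{\zero}$, whereas you produce the missing traceless diagonal odd elements explicitly via $[E_{ij},E_{ji}]=E_{ii}-E_{jj}$; your observation that $m\geq 3$ suffices (and that $I_{m|m}$ is redundant) is accurate and harmless, since the lemma is only applied with $m\geq 5$.
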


\begin{proof}
Let $\g \subseteq \sq(m)$ be the Lie superalgebra generated by the identity matrix $I_{m|m}$ and the set \eqref{eq:odd-no-diagonal}. First show that all simple root vectors in $\fsl(m) \subseteq \gl(m) \cong \sq(m)_{\zero}$ are elements of $\g$ by taking Lie brackets between root vectors in the set \eqref{eq:odd-no-diagonal}. The simple root vectors generate $\fsl(m)$ as a Lie algebra, and together with the identity matrix they generate all of $\gl(m) \cong \sq(m)_{\zero}$. Then $\sq(m)_{\zero} \subseteq \g$. Now since $\sq(m)_{\one}$ is irreducible under the adjoint action of $\sq(m)_{\zero}$, one deduces that $\sq(m)_{\one} \subseteq \g$ as well, and hence $\g = \sq(m)$.
\end{proof}

\subsubsection{The case \texorpdfstring{$W^s \neq 0$}{Ws neq 0}}

Now suppose that $k \neq s$ and that $W^k$ and $W^s$ are both nonzero. Let $B_k$ and $B_s$ be the removable boxes of residues $k$ and $s$ in the Young diagram of $\lambda$. Recall that $s \neq 0$ because $\lambda$ is not symmetric. Since $B_k$ and $B_s$ are both removable, we can remove $B_s$ and then $B_k$, showing that $\mu_s$ has a removable box of residue $k$. Then by symmetry, $\mu_s$ must have a removable box $B_{-k}$ of residue $-k$. (It may happen that $k = 0$, in which case $B_{-k} = B_k$.) This implies that $-k \neq s$, because a new box of residue $s$ would be removable from $\mu_s$ only if there had originally been boxes both immediately above and to the left of $B_s$ in the Young diagram of $\lambda$, and if both of those boxes had already been removed. In other words, it requires at least two intermediate steps to remove two boxes of the same residue from $\lambda$. Now since the boxes $B_s$ and $B_k$ are both removable, it follows that $\abs{s-k} \geq 2$. And since $-k \neq s$, then $\abs{s-(-k)} \geq 1$. If $\abs{s-(-k)} = 1$, then $-k = s \pm 1$, and $B_{-k}$ is located either immediately above (if $-k = s+1$) or immediately to the left (if $-k=s-1$) of $B_s$. Either way, the box $B_{-k}$ is not removable from the Young diagram of $\lambda$. If $\abs{s-(-k)} \geq 2$, then the box $B_{-k}$ must be removable from the Young diagram of $\lambda$, and hence $\lambda$ has removable boxes of residues $s$, $k$, and $-k$ (the latter two being the same box, if $k = 0$), and the spaces $W^s$, $W^k$, and $W^{-k}$ are each nonzero.

We want to show that $W^\lambda(s_{n-1})$ has nonzero components in each of the terms in the second line of \eqref{eq:End(Wlambda)Jone}. We consider separately the cases $\abs{s - (-k)} = 1$ and $\abs{s - (-k)} \geq 2$.

\subsubsection{The case \texorpdfstring{$\abs{s-(-k)}=1$}{|s-(-k)|=1}} \label{subsubsec:|s-(-k)|=1}

First suppose that $\abs{s - (-k)} = 1$. Reasoning along lines similar to those in Section \ref{subsubsec:Fn-sl-summands}, one can find (distinct) weights in $\calW(\lambda)$ of the forms
	\begin{equation} \label{eq:three-weights}
	\begin{aligned}
	\alpha &= (\alpha^\star,k,s), &\qquad \alpha' &= (-\alpha^\star,-k,s), &\qquad \beta &= (\alpha^\star,s,k).
	\end{aligned}
	\end{equation}
Then $\gamma := (\alpha^\star,k)$ and $-\gamma = (-\alpha^\star,-k)$ are elements of $\calW(\mu_s)$. After possibly replacing $\lambda$ with $\lambda'$, we may assume that $k \geq 0$ and that $\gamma \in \Wbar(\mu_s)$; i.e., we may assume that $\gamma$ is the `positive' element of the pair $\pm \gamma$.	By our conventions in Section \ref{subsec:weight-simple-super}, each of the three weights in \eqref{eq:three-weights} is the positive element of a pair in $\Wbar(\lambda)$.

The module $S^{\mu_s}$ occurs canonically as a $\CS_{n-1}$-module summand in $S^\lambda$ (as the sum of the weight spaces whose weights end in the integer $s$) and as a $\CS_{n-1}$-module summand in $S^{\lambda'}$ (as the sum of the weight spaces whose weights end in the integer $-s$). Identifying $S^{\mu_s}$ with a summand in $S^\lambda$, the weight vectors $v_\alpha,v_{\alpha'} \in S^\lambda$ restrict to a pair of weight vectors $u_\gamma,u_{-\gamma} \in S^{\mu_s}$ of weights $\gamma$ and $-\gamma$, respectively. (We use the letter $u$ rather than $v$ to indicate when we are considering a vector's restriction to $S^{\mu_s}$.) Rescaling $v_{\alpha'}$ if necessary, we may assume that $u_{-\gamma} = \phi^{\mu_s}(u_\gamma)$ as in \eqref{eq:compatible-negative-weight-vector}. Next, since the map $\phi^\lambda: S^\lambda \to S^{\lambda'}$ restricts for each $\nu \in \calW(\lambda)$ to a linear isomorphism $S^{\lambda}_\nu \to S^{\lambda'}_{-\nu}$, we see that $\phi^\lambda$ maps the copy of $S^{\mu_s}$ in $S^\lambda$ onto the copy of $S^{\mu_s}$ in $S^{\lambda'}$. Then rescaling $\phi^\lambda$ if necessary, we may assume that $\phi^\lambda$ restricts to the map $\phi^{\mu_s}: S^{\mu_s} \to S^{\mu_s}$ specified via \cref{conv:associator-scaling}.

The summand $S^{\mu_s} \subset S^\lambda$ admits the (\emph{non}-super) $\CS_{n-1}$-decomposition $S^{\mu_s} = S^{\mu_s^+} \oplus S^{\mu_s^-}$. Under this decomposition, one gets $u_\gamma = u_\gamma^+ + u_\gamma^-$ and $u_{-\gamma} = u_\gamma^+ - u_\gamma^-$ in $S^{\mu_s} \subset S^\lambda$, where
	\begin{align} \label{eq:u-gamma-pm}
	u_\gamma^+ &= \tfrac{1}{2}\left( u_\gamma + u_{-\gamma} \right) = \tfrac{1}{2}\left( v_\alpha + v_{\alpha'} \right) \quad \text{and} \quad u_\gamma^- = \tfrac{1}{2} \left( u_\gamma - u_{-\gamma} \right) = \tfrac{1}{2}\left( v_\alpha - v_{\alpha'} \right);
	\end{align}
cf.\ \eqref{eq:Fn-valpha-pm}. Then under the $\CS_{n-1}$-supermodule identification $W^s \cong W^{\mu_s} \oplus \Pi(W^{\mu_s})$ given by \eqref{eq:Wmu-identification} and \eqref{eq:Pi(Wmu)-identification}, one has
	\begin{equation} \label{eq:W^s-summand-vectors}
	\begin{split}
	W^{\mu_s}_{\zero}		&\ni u_\gamma^+ + \phi^\lambda(u_\gamma^+) 
	= \tfrac{1}{2}\left( v_\alpha + v_{\alpha'} \right) 
	+ \tfrac{1}{2}\left( v_{-\alpha} + v_{-\alpha'} \right) 
	= v_\alpha^+ + v_{\alpha'}^+, \\
	W^{\mu_s}_{\one}		&\ni u_\gamma^- - \phi^\lambda(u_\gamma^-) 
	= \tfrac{1}{2}\left( v_\alpha - v_{\alpha'} \right) 
	- \tfrac{1}{2}\left( v_{-\alpha} - v_{-\alpha'} \right) 
	= v_\alpha^- - v_{\alpha'}^-, \\
	\Pi(W^{\mu_s}_{\zero})	&\ni u_\gamma^+ - \phi^\lambda(u_\gamma^+)
	= \tfrac{1}{2}\left( v_\alpha + v_{\alpha'} \right) 
	- \tfrac{1}{2}\left( v_{-\alpha} + v_{-\alpha'} \right) 
	= v_\alpha^- + v_{\alpha'}^-, \\
	\Pi(W^{\mu_s}_{\one})	&\ni u_\gamma^- + \phi^\lambda(u_\gamma^-)
	= \tfrac{1}{2}\left( v_\alpha - v_{\alpha'} \right) 
	+ \tfrac{1}{2}\left( v_{-\alpha} - v_{-\alpha'} \right) 
	= v_\alpha^+ - v_{\alpha'}^+.
	\end{split}
	\end{equation}

Set $c = (s-k)^{-1}$ (recall that $\abs{s-k} \geq 2$), let $w_\beta = (s_{n-1}-c) \cdot v_\alpha$, and let the auxiliary vectors $w_\beta^+,w_\beta^- \in W^k$ be defined as in \cref{prop:si-on-weights-super}\eqref{item:si-super-nontrivial-action}. Then $W^\lambda(s_{n-1})$ leaves invariant the span of the homogeneous vectors
	\begin{gather}
	\set{w_\beta^+,\; u_\gamma^+ + \phi^\lambda(u_\gamma^+),\; u_\gamma^- + \phi^\lambda(u_\gamma^-),\; w_\beta^-,\; u_\gamma^+ - \phi^\lambda(u_\gamma^+),\; u_\gamma^- - \phi^\lambda(u_\gamma^-)} \notag \\
	= \set{w_\beta^+,\; v_\alpha^+ + v_{\alpha'}^+,\; v_\alpha^+ - v_{\alpha'}^+,\; w_\beta^-,\; v_\alpha^- + v_{\alpha'}^-,\; v_{\alpha}^- - v_{\alpha'}^- }. \label{eq:|s+k|=1-basis}
	\end{gather}
Specifically, let $\epsilon \in \set{+1,-1}$ be the scalar such that $s_{n-1} \cdot v_{\alpha'} = \epsilon \cdot v_{\alpha'}$ in \cref{prop:si-on-weights}(\ref{item:alphai-pm-alphai+1}). Then applying \cref{prop:si-on-weights-super}, one can show that $W^\lambda(s_{n-1})$ acts in the homogeneous basis \eqref{eq:|s+k|=1-basis} via the matrix in Figure \ref{fig:6x6-matrix}. This shows that $W^\lambda(s_{n-1})$ has nonzero components in each of the terms in the second line of \eqref{eq:End(Wlambda)Jone}. Then by the semisimplicity of $\fh$, and by the fact that all nontrivial simple $\fh$-module summands in $\End(W^\lambda)^J_{\one}$ occur with multiplicity one, we conclude that each of the summands in the second line of \eqref{eq:End(Wlambda)Jone} must be contained in $W^\lambda(\g_n)$.

	\begin{figure}
	\begin{equation*} 
	\left[
	\renewcommand{\arraystretch}{1.1}
	\begin{array}{ccc|ccc}
	0 & 0 & 0 & -c & 1 & 1 \\
	0 & 0 & 0 & \frac{1-c^2}{2} & \frac{c+\epsilon}{2} & \frac{c-\epsilon}{2} \\
	0 & 0 & 0 & \frac{1-c^2}{2} & \frac{c-\epsilon}{2} & \frac{c+\epsilon}{2} \\
	\hline
	-c & 1 & 1 & 0 & 0 & 0 \\
	\frac{1-c^2}{2} & \frac{c+\epsilon}{2} & \frac{c-\epsilon}{2} & 0 & 0 & 0 \\
	\frac{1-c^2}{2} & \frac{c-\epsilon}{2} & \frac{c+\epsilon}{2} & 0 & 0 & 0
	\end{array}
	\right]
	\end{equation*}
	\caption{Matrix for the case $W^s \neq 0$ and $\abs{s-(-k)} = 1$.} \label{fig:6x6-matrix} 
	\end{figure}

\subsubsection{The case \texorpdfstring{$\abs{s-(-k)} \geq 2$}{|s-(-k)| >= 2}}

Now suppose $\abs{s - (-k)} \geq 2$. In this case the Young diagram of $\lambda$ has removable boxes of residues $s$, $k$, and $-k$, and we can argue as in Section \ref{subsubsec:Fn-sl-summands} to see that $\calW(\lambda)$ contains (distinct) weights of the forms
	\begin{equation} \label{eq:four-weights}
	\begin{aligned}
	\alpha &= (\alpha^\star,k,s), &\qquad \alpha' &= (-\alpha^\star,-k,s), \\
	\beta &= (\alpha^\star,s,k), &\qquad  \beta' &= (-\alpha^\star,s,-k).
	\end{aligned}
	\end{equation}
One now repeats word-for-word the reasoning in first three paragraphs of Section \ref{subsubsec:|s-(-k)|=1} to define weight vectors $v_\alpha$, $v_{\alpha'}$, $u_\gamma$, $u_{-\gamma}$ that satisfy the relations \eqref{eq:u-gamma-pm} and \eqref{eq:W^s-summand-vectors}.

Set $c = (s-k)^{-1}$ and $d = (s+k)^{-1}$. Let $w_\beta = (s_{n-1}-c) \cdot v_\alpha$, let $w_{\beta'} = (s_{n-1}-d) \cdot v_{\alpha'}$, and let the auxiliary vectors $w_\beta^+,w_\beta^- \in W^k$ and $w_{\beta'}^+,w_{\beta'}^- \in W^{-k}$ be defined as in \cref{prop:si-on-weights-super}\eqref{item:si-super-nontrivial-action}. Then $W^\lambda(s_{n-1})$ leaves invariant the span of the homogeneous vectors
	\begin{gather}
	\set{ w_\beta^+,\; w_{\beta'}^+,\; u_\gamma^+ + \phi^\lambda(u_\gamma^+),\; u_\gamma^- + \phi^\lambda(u_\gamma^-),\; w_\beta^-,\; w_{\beta'}^-,\; u_\gamma^+ - \phi^\lambda(u_\gamma^+),\; u_\gamma^- - \phi^\lambda(u_\gamma^-) } \notag \\
	= \set{w_\beta^+,\; w_{\beta'}^+,\; v_\alpha^+ + v_{\alpha'}^+,\; v_\alpha^+ - v_{\alpha'}^+,\; w_\beta^-,\; w_{\beta'}^-,\; v_\alpha^- + v_{\alpha'}^-,\; v_{\alpha}^- - v_{\alpha'}^- }. \label{eq:|s+k|>=2-basis}
	\end{gather}
Applying \cref{prop:si-on-weights-super}, one can show that $W^\lambda(s_{n-1})$ acts in the homogeneous basis \eqref{eq:|s+k|>=2-basis} via the matrix in Figure \ref{fig:big-matrix}. This shows (for both $k$ and $-k$) that $W^\lambda(s_{n-1})$ has nonzero components in each of the terms in the second line of \eqref{eq:End(Wlambda)Jone}. Then by the semisimplicity of $\fh$, and by the fact that all nontrivial simple $\fh$-module summands in $\End(W^\lambda)^J_{\one}$ occur with multiplicity one, we conclude that each of the summands in the second line of \eqref{eq:End(Wlambda)Jone} must be contained in $W^\lambda(\g_n)$.

	\begin{figure}
	\begin{equation*} 
	\left[
	\renewcommand{\arraystretch}{1.1}
	\begin{array}{cccc|cccc}
	0 & 0 & 0 & 0 & -c & 0 & 1 & 1 \\
	0 & 0 & 0 & 0 & 0 & -d & 1 & -1 \\
	0 & 0 & 0 & 0 & \frac{1-c^2}{2} & \frac{1-d^2}{2} & \frac{c+d}{2} & \frac{c-d}{2} \\
	0 & 0 & 0 & 0 & \frac{1-c^2}{2} & \frac{d^2-1}{2} & \frac{c-d}{2} & \frac{c+d}{2} \\
	\hline
	-c & 0 & 1 & 1 & 0 & 0 & 0 & 0 \\
	0 & -d & 1 & -1 & 0 & 0 & 0 & 0 \\
	\frac{1-c^2}{2} & \frac{1-d^2}{2} & \frac{c+d}{2} & \frac{c-d}{2} & 0 & 0 & 0 & 0 \\
	\frac{1-c^2}{2} & \frac{d^2-1}{2} & \frac{c-d}{2} & \frac{c+d}{2} & 0 & 0 & 0 & 0
	\end{array}
	\right]
	\end{equation*}
	\caption{Matrix for the case $W^s \neq 0$, $\abs{s-(-k)} \geq 2$.} \label{fig:big-matrix}
	\end{figure}

\subsubsection{The case \texorpdfstring{$W^s \neq 0$}{Ws neq 0}, concluded}

Now the only nontrivial $\fh$-module constituents of \eqref{eq:End(Wlambda)Jone} that we have not yet shown are contained in $W^\lambda(\g_n)$ are the copies of the adjoint representations of $\fsl(\Wmuszero)$ and $\fsl(\Wmusone)$ in $\End(\Wmuszero)$ and $\End(\Wmusone)$, respectively. Once we show that these simple constituents are contained in $W^\lambda(\g_n)$, we can then argue as in Section \ref{subsubsec:Ws=0}, using \cref{lemma:sq(m)-generated}, to conclude that $\sq(W^\lambda) \subseteq W^\lambda(\g_n)$.

We know that $W^\lambda(\g_n)$ contains the terms $\Hom(\Wmuszero,W_k)$ and $\Hom(W_k,\Wmusone)$ from \eqref{eq:End(Wlambda)Jone}. Like all of the terms in \eqref{eq:End(Wlambda)Jone}, these two terms are concentrated in odd superdegree. We also know from \eqref{Ws(gn-1')-image} that $W^s(\fs_{n-1}') \subset W^\lambda(\g_n)$ contains a copy of $\Hom(\Wmusone,\Wmuszero)$, also concentrated in odd superdegree, equal to the image of the diagonal map
	\[
	\Hom(\Wmusone,\Wmuszero) \to \Hom(W^{\mu_s}_{\one},W^{\mu_s}_{\zero}) \oplus \Hom(\Pi(W^{\mu_s}_{\one}),\Pi(W^{\mu_s}_{\zero})).
	\]
By \cref{remark:sl(1)=0} and \cref{lemma:lower-bound-dim}, we know that $W_k$, $\Wmuszero$, and $\Wmusone$ are each at least 3-dimen\-sional. Now one can choose appropriate `matrix units'
	\[
	x \in \Hom(\Wmusone,\Wmuszero), \qquad y \in \Hom(W_k,\Wmusone), \qquad z \in \Hom(\Wmuszero,W_k)
	\]
such that the Lie bracket $[x,[y,z]]$ is a nonzero element---of $W^\lambda(\g_n)$---in the subspace $\fsl(\Wmuszero) \subset \End(\Wmuszero) \subset \End(W^s)^J_{\one}$. Then by the irreducibility of the adjoint representation $\fsl(\Wmuszero)$, the $\fh$-submodule of $W^\lambda(\g_n)$ generated by $[x,[y,z]]$ must be equal to all of $\fsl(\Wmuszero)$. Similarly, one can show that $W^\lambda(\g_n)$ contains the subspace $\fsl(\Wmusone) \subset \End(\Wmusone) \subset \End(W^s)^J_{\one}$.

\subsection{First consequence of Theorem \ref{theorem:Wlambda(gn)}} \label{subsec:first-consequences}

\begin{corollary} \label{cor:g0-reductive}
Let $n \geq 2$, and set $\g = \g_n$.
	\begin{enumerate}
	\item For each $\lambda \in E_n \cup F_n$, the supermodule $W^\lambda$ is semisimple as a $\gzero$-module.
	\item $\gzero$ is a reductive Lie algebra. 
	\end{enumerate}
In particular, $\gzero = Z(\gzero) \oplus \fD(\gzero)$, where $Z(\gzero)$ is the center of $\gzero$, and $\fD(\gzero)$ is semisimple.
\end{corollary}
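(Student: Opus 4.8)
The plan is to read off the even part $W^\lambda(\gzero)$ of each image from \cref{theorem:Wlambda(gn)}, observe that it acts on $W^\lambda$ with a visibly semisimple module structure, and then derive part (2) from part (1) together with the faithfulness of $\bigoplus_\lambda W^\lambda$ and the standard criterion for reductivity. For part (1), note first that the supermodule structure map $\CS_n \to \End(W^\lambda)$ is an even superalgebra homomorphism, so it respects the $\Z_2$-grading and hence $W^\lambda(\gzero) = W^\lambda(\g_n)_{\zero}$. By \cref{theorem:Wlambda(gn)}, if $\lambda \in F_n$ then $W^\lambda(\g_n) = \fsl(W^\lambda)$, whose even part is $\{\sm{A & 0 \\ 0 & D} : \tr(A) = \tr(D)\}$; if $\lambda \in E_n$ then, since $\res(\lambda) \cdot J^\lambda$ is odd, $W^\lambda(\gzero) = \sq(W^\lambda)_{\zero} = \fq(W^\lambda)_{\zero}$, which is the diagonal copy $\{\sm{A & 0 \\ 0 & A}\} \cong \gl(m)$ (with $2m = \dim(W^\lambda)$) via the identification $W^\lambda_{\zero} \simeq W^\lambda_{\one}$ by $J^\lambda$. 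In either case $W^\lambda_{\zero}$ and $W^\lambda_{\one}$ are $W^\lambda(\gzero)$-submodules of $W^\lambda$ on which $W^\lambda(\gzero)$ acts through the full general linear algebra, hence each is $0$ or irreducible; so $W^\lambda = W^\lambda_{\zero} \oplus W^\lambda_{\one}$ is semisimple as a $W^\lambda(\gzero)$-module, and therefore as a $\gzero$-module. (One checks using \cref{lemma:lower-bound-dim} and the fact that the odd permutations do not annihilate $S^\lambda$ that both $W^\lambda_{\zero}$ and $W^\lambda_{\one}$ are in fact nonzero, though this is not needed for semisimplicity.)

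For part (2), \cref{cor:CSn-as-superalgebra} realizes $\CS_n$ as a subsuperalgebra of $\bigoplus_{\lambda \in \Pbar(n)} \End(W^\lambda)$, so $V := \bigoplus_{\lambda \in \Pbar(n)} W^\lambda$ is a faithful $\CS_n$-supermodule, hence a faithful $\gzero$-module; and $V$ is semisimple as a $\gzero$-module by part (1). A finite-dimensional Lie algebra over a field of characteristic zero that admits a faithful finite-dimensional semisimple representation is reductive, so $\gzero$ is reductive. The asserted decomposition $\gzero = Z(\gzero) \oplus \fD(\gzero)$ with $\fD(\gzero)$ semisimple is then the standard structure theorem for reductive Lie algebras.

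The main obstacle here is bookkeeping rather than anything conceptual: one must verify that $\res(\lambda) \cdot J^\lambda$ contributes nothing to the even part $W^\lambda(\gzero)$ in the Type~Q case (because $J^\lambda$ is odd), that $\sq(W^\lambda)_{\zero}$ and $\fq(W^\lambda)_{\zero}$ coincide (the odd trace of an even operator vanishes), and that the homogeneous subspaces $W^\lambda_{\zero}, W^\lambda_{\one}$ are genuine $\gzero$-submodules on which the action is through the full matrix algebra even in the small cases where $\dim(W^\lambda) = 2$. With those points settled, the result is an immediate consequence of \cref{theorem:Wlambda(gn)}, \cref{cor:CSn-as-superalgebra}, and the standard theory of reductive Lie algebras.
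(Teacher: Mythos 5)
Your proposal is correct and follows essentially the same route as the paper: read off $W^\lambda(\gzero) = W^\lambda(\g)_{\zero}$ from \cref{theorem:Wlambda(gn)} (noting the odd term $\res(\lambda)\cdot J^\lambda$ drops out), conclude that $W^\lambda_{\zero}$ and $W^\lambda_{\one}$ are simple $\gzero$-modules, and then invoke faithfulness of $\bigoplus_\lambda W^\lambda$ together with the standard criterion (a Lie algebra with a faithful finite-dimensional semisimple module is reductive, which the paper cites from Bourbaki) to obtain reductivity and the decomposition $\gzero = Z(\gzero) \oplus \fD(\gzero)$. The extra bookkeeping you flag ($\sq(W^\lambda)_{\zero} = \fq(W^\lambda)_{\zero}$, the full $\gl$ action on each homogeneous piece) is implicit in the paper's one-line deduction and is handled correctly.
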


\begin{proof}
By \cref{theorem:Wlambda(gn)},
	\[
	W^\lambda(\gzero) = W^\lambda(\g)_{\zero} = \begin{cases}
	\sq(W^\lambda)_{\zero} & \text{if $\lambda \in E_n$,} \\
	\fsl(W^\lambda)_{\zero} & \text{if $\lambda \in F_n$}.
	\end{cases}
	\]
In either case, this implies that $W^\lambda_{\zero}$ and $W^\lambda_{\one}$ are simple $\gzero$-modules, and hence $W^\lambda$ is semisimple. Now $\bigoplus_{\lambda \in E_n \cup F_n} W^\lambda$ is a faithful, finite-dimensional, semisimple $\gzero$-module. Then $\gzero$ is reductive, $\fD(\gzero)$ is semisimple, and $\gzero = Z(\gzero) \oplus \fD(\gzero)$, by Proposition 5 of \cite[Chapter I, \S6, no.\ 4]{Bourbaki:1998}.
\end{proof}

\begin{remark}
In general, $\fD(\gzero) \neq \fD(\g)_{\zero}$.
\end{remark}

\begin{corollary} \label{cor:Wlambda(D)}
Let $n \geq 2$. For $\lambda \in E_n$, make the identification $W^\lambda_{\zero} \simeq W^\lambda_{\one}$ via the odd involution $J^\lambda : W^\lambda \to W^\lambda$, and write $W_\lambda$ for the common identified space, as in Section \ref{subsubsec:subalgebra-f}. Then
	\begin{equation} \label{eq:Wlambda(D)}
	W^\lambda(\fD(\gzero)) = \fD(W^\lambda(\g)_{\zero}) = \begin{cases}
	\fsl(W_\lambda) & \text{if $\lambda \in E_n$,} \\
	\fsl(W^\lambda_{\zero}) \oplus \fsl(W^\lambda_{\one}) & \text{if $\lambda \in F_n$,}
	\end{cases}
	\end{equation}
where $\fsl(W_\lambda)$ denotes the diagonally embedded copy of $\fsl(W_\lambda)$ in $\gl(W^\lambda_{\zero}) \oplus \gl(W^\lambda_{\one})$, as in \eqref{eq:Type-Q-diagonal-maps}. The homogeneous subspaces of $W^\lambda$ are submodules for the action of $\CA_n$, and hence also for the action of $\fD(\gzero)$; if $\lambda \in E_n$ these submodules are both isomorphic to the simple $\abs{\CA_n}$-module $S^\lambda$, while if $\lambda \in F_n$ they are isomorphic to the simple $\abs{\CA_n}$-modules $S^{\lambda^+}$ and $S^{\lambda^-}$.
\end{corollary}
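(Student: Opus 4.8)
The plan is to reduce the whole statement to \cref{theorem:Wlambda(gn)} by way of the elementary fact that a homomorphism of Lie (super)algebras carries the derived subalgebra onto the derived subalgebra of its image. First I would note that the supermodule structure map $\CS_n \to \End(W^\lambda)$ preserves the $\Z_2$-grading, so its restriction to $\gzero$ has image $W^\lambda(\gzero) = W^\lambda(\g)_{\zero}$; applying the above fact to this restriction gives $W^\lambda(\fD(\gzero)) = [W^\lambda(\g)_{\zero}, W^\lambda(\g)_{\zero}] = \fD(W^\lambda(\g)_{\zero})$, which is the first asserted equality in \eqref{eq:Wlambda(D)}. It then remains to read off $W^\lambda(\g)_{\zero}$ from \cref{theorem:Wlambda(gn)} and to compute its derived subalgebra in the two cases.

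For $\lambda \in E_n$ I would observe that $J^\lambda$ is odd, so the even part of $W^\lambda(\g) = \sq(W^\lambda) + \C \cdot (\res(\lambda) \cdot J^\lambda)$ is simply $\sq(W^\lambda)_{\zero}$. Unwinding the matrix realization of \cref{ex:Q(n)} under the identification $W^\lambda_{\zero} \simeq W^\lambda_{\one}$ by $J^\lambda$, this is precisely the diagonally embedded $\gl(W_\lambda) \hookrightarrow \gl(W^\lambda_{\zero}) \oplus \gl(W^\lambda_{\one})$ appearing in \eqref{eq:Type-Q-diagonal-maps}, whose derived subalgebra is the diagonal copy of $\fsl(W_\lambda)$. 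For $\lambda \in F_n$ one has $W^\lambda(\g) = \fsl(W^\lambda)$, whose even part is $\{(A,D) \in \gl(W^\lambda_{\zero}) \oplus \gl(W^\lambda_{\one}) : \tr(A) = \tr(D)\} = \fsl(W^\lambda_{\zero}) \oplus \fsl(W^\lambda_{\one}) \oplus \C \cdot \id_{W^\lambda}$; taking Lie brackets annihilates the central summand and returns $\fsl(W^\lambda_{\zero}) \oplus \fsl(W^\lambda_{\one})$, a routine $\gl$-computation that remains valid when a homogeneous subspace is one-dimensional (the corresponding $\fsl$-summand then being zero). For the final sentence, $W^\lambda_{\zero}$ and $W^\lambda_{\one}$ are $\CA_n$-stable because the $\CS_n$-action preserves the grading, hence $\fD(\gzero)$-stable since $\fD(\gzero) \subseteq \gzero \subseteq \CA_n$, and their structure as $\abs{\CA_n}$-modules is exactly what is recorded in \cref{lemma:super-CSn-as-ordinary} and \cref{prop:Wlambda-decomposition} (for $\lambda \in E_n$, the maps $u \mapsto u \pm \phi^\lambda(u)$ are $\abs{\CA_n}$-isomorphisms $S^\lambda \to W^\lambda_{\zero}$ and $S^\lambda \to W^\lambda_{\one}$ because $\phi^\lambda$ is a $\CA_n$-homomorphism; for $\lambda \in F_n$ one has $W^\lambda_{\zero} = S^{\lambda^+}$, $W^\lambda_{\one} = S^{\lambda^-}$ by construction).

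I do not expect any genuine obstacle: once \cref{theorem:Wlambda(gn)} is available, the argument is bookkeeping. The one place calling for a moment's care is matching the intrinsic description $W^\lambda(\g)_{\zero} = \sq(W^\lambda)_{\zero}$ in the Type Q case with the diagonally embedded $\gl(W_\lambda)$ named in the statement --- i.e.\ confirming that conjugation by $J^\lambda$ interchanges the two homogeneous blocks of $\sq(W^\lambda)_{\zero}$ --- which is precisely the content of \cref{ex:Q(n)} together with \eqref{eq:Type-Q-diagonal-maps}.
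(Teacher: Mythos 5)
Your proposal is correct and is essentially the paper's own argument: the paper's proof of this corollary is simply ``Direct calculation from \cref{theorem:Wlambda(gn)}'', and your write-up just makes that calculation explicit (image of the derived subalgebra equals derived subalgebra of the image, then reading off the even parts of $\sq(W^\lambda)+\C\,\res(\lambda)J^\lambda$ and $\fsl(W^\lambda)$, and identifying the homogeneous subspaces via \cref{prop:Wlambda-decomposition}).
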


\begin{proof}
Direct calculation from \cref{theorem:Wlambda(gn)}.
\end{proof}

\subsection{Detecting isomorphisms} \label{subsec:detecting-isos}

\begin{lemma} \label{lemma:submodule-equivalent}
Let $n \geq 5$, set $\g = \g_n$, and let $W$ be a finite-dimensional $\CA_n$-module. Then the following are equivalent for a subspace $V \subseteq W$:
	\begin{enumerate}
	\item \label{item:CAn-submodule} $V$ is a $\CA_n$-submodule.
	\item \label{item:g0-submodule} $V$ is a submodule for the action of the Lie subalgebra $\gzero \subseteq \CA_n$.
	\end{enumerate}
\end{lemma}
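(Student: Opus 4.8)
The plan is to separate the two implications, both of which are short. For \eqref{item:CAn-submodule} $\Rightarrow$ \eqref{item:g0-submodule} there is nothing to do: by construction $\gzero$ is a linear subspace of $\CA_n$, and the Lie-algebraic action of $\gzero$ on any $\CA_n$-module $W$ is by definition just the restriction of the associative action of $\CA_n$. Hence every $\CA_n$-submodule $V \subseteq W$ is in particular closed under multiplication by the elements of $\gzero$, i.e.\ is a $\gzero$-submodule.

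For the reverse implication \eqref{item:g0-submodule} $\Rightarrow$ \eqref{item:CAn-submodule}, I would invoke the same elementary group-theoretic fact used in the proof of \cref{lemma:lie-algebra-centers}: for $n \geq 5$, the alternating group $A_n$ is generated as a group by the set \eqref{eq:An-generators} of all products $(i,j)(k,\ell)$ of two disjoint transpositions. Each such product lies in $\gzero$ up to a scalar, since by \eqref{eq:An-generator} one has $[(i,j),(k,\ell)] = 2 \cdot (i,j)(k,\ell)$ whenever $i,j,k,\ell$ are distinct, and the left-hand side is a Lie bracket of two transpositions, hence an element of $\gzero$. So if $V \subseteq W$ is stable under $\gzero$, then $2 \cdot (i,j)(k,\ell) \cdot V \subseteq V$, whence $(i,j)(k,\ell) \cdot V \subseteq V$ for every double transposition. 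Writing an arbitrary $\sigma \in A_n$ as a finite product of such double transpositions and iterating, we get $\sigma \cdot V \subseteq V$ for all $\sigma \in A_n$, so $V$ is an $\CA_n$-submodule.

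I do not expect any genuine obstacle here; the argument is purely formal once the generation statement for $A_n$ is in hand. The one point that must be flagged is the hypothesis $n \geq 5$: it is precisely what makes the double transpositions generate all of $A_n$ (for $n = 4$ they generate only the Klein four-group, and the equivalence would fail), so the lemma cannot be extended to smaller $n$ by this method.
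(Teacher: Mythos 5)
Your proof is correct and follows essentially the same route as the paper: the paper's proof simply cites the observation (made in the proof of \cref{lemma:lie-algebra-centers}) that for $n \geq 5$ the double transpositions $(i,j)(k,\ell)$ generate $A_n$ and lie in $\gzero$ via \eqref{eq:An-generator}, which is exactly the argument you spell out. Your additional remark about why $n \geq 5$ is needed is accurate but not part of the paper's proof.
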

\begin{proof}
The fact that \eqref{item:CAn-submodule} and \eqref{item:g0-submodule} are equivalent is immediate from the observation in the proof of \cref{lemma:lie-algebra-centers} that for $n \geq 5$, $\gzero$ contains a set of associative algebra generators for $\CA_n$.
\end{proof}

\begin{lemma} \label{lemma:An-simple-implies-D(g0)-simple}
Let $n \geq 5$, set $\g = \g_n$, and let $W$ be a simple $\CA_n$-module. Then $W$ is simple as a module for the Lie algebra $\fD(\gzero)$.
\end{lemma}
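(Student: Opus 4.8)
The plan is to deduce this from \cref{lemma:submodule-equivalent} together with the reductive decomposition $\gzero = Z(\gzero)\oplus\fD(\gzero)$ of \cref{cor:g0-reductive}. The key point will be that, although $\fD(\gzero)$ is only a part of $\gzero$, any subspace of $W$ stable under $\fD(\gzero)$ is automatically stable under all of $\gzero$, because the complementary summand $Z(\gzero)$ acts on the simple module $W$ by scalars; once that is established, $\gzero$-stable subspaces are $\CA_n$-submodules by \cref{lemma:submodule-equivalent}, and simplicity of $W$ over $\CA_n$ finishes the argument.

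In more detail, I would proceed as follows. First, since $\CA_n$ is finite-dimensional and semisimple (Maschke), the simple module $W$ is finite-dimensional, and every element of $Z(\CA_n)$ defines an $\CA_n$-module endomorphism of $W$ (it commutes with all of $\CA_n$), hence acts on $W$ by a scalar by Schur's Lemma. Because $n \geq 5$, \cref{lemma:lie-algebra-centers}(3) gives $Z(\gzero) = \gzero \cap Z(\CA_n) \subseteq Z(\CA_n)$, so every element of $Z(\gzero)$ acts on $W$ by a scalar. Now let $V \subseteq W$ be any $\fD(\gzero)$-submodule. By \cref{cor:g0-reductive}, $\gzero = Z(\gzero) \oplus \fD(\gzero)$ as vector spaces; since $V$ is preserved by every scalar operator on $W$ it is $Z(\gzero)$-stable, and by hypothesis it is $\fD(\gzero)$-stable, so $V$ is a $\gzero$-submodule of $W$. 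Applying \cref{lemma:submodule-equivalent} (valid since $n \geq 5$), $V$ is then an $\CA_n$-submodule of $W$, and simplicity of $W$ forces $V = 0$ or $V = W$. Hence $W$ is simple as a module over $\fD(\gzero)$.

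I do not expect a serious obstacle here: the argument is a short combination of \cref{cor:g0-reductive}, \cref{lemma:lie-algebra-centers}, \cref{lemma:submodule-equivalent}, and Schur's Lemma, all available in the range $n \geq 5$. The only subtlety worth flagging is the a priori gap between ``$\fD(\gzero)$-stable'' and ``$\gzero$-stable'', which is precisely bridged by the scalar action of the complementary center $Z(\gzero)$ on the simple module $W$. As an alternative (more computational) route, one could instead use the classification: $W$ is $\CA_n$-isomorphic to a homogeneous component $W^\lambda_{\zero}$ or $W^\lambda_{\one}$ of some simple supermodule $W^\lambda$ with $\lambda \in \Pbar(n)$, and \cref{cor:Wlambda(D)} identifies the image of $\fD(\gzero)$ on that component as a special linear Lie algebra acting through its natural representation, which is irreducible; this works too, but requires separate bookkeeping for the (finitely many, small-$n$) cases where the component is one-dimensional, which is why I prefer the first approach.
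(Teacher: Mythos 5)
Your proposal is correct and follows essentially the same route as the paper's proof: $Z(\gzero)\subseteq Z(\CA_n)$ acts by scalars on the simple module $W$ (Schur), so a $\fD(\gzero)$-stable subspace is $\gzero$-stable via $\gzero = Z(\gzero)\oplus\fD(\gzero)$ from \cref{cor:g0-reductive}, and then \cref{lemma:submodule-equivalent} upgrades it to an $\CA_n$-submodule. No changes needed.
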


\begin{proof}
Let $V \subseteq W$ be a nonzero $\fD(\gzero)$-submodule of $W$. Since $Z(\gzero) \subseteq Z(\CA_n)$ by \cref{lemma:lie-algebra-centers}, each element $z \in Z(\gzero)$ acts on $W$ as a scalar multiple of the identity, by Schur's Lemma. Then $V$ is closed under the action of $\gzero = Z(\gzero) \oplus \fD(\gzero)$ (where the equality holds by \cref{cor:g0-reductive}). This implies by \cref{lemma:submodule-equivalent} that $V$ is a nonzero $\CA_n$-submodule of $W$, and hence $V = W$. Thus $W$ is simple as a $\fD(\gzero)$-module.
\end{proof}

\begin{proposition} \label{prop:irred-equivalent}
Let $n \geq 5$, set $\g = \g_n$, and let $V_1$ and $V_2$ be two simple $\CA_n$-modules.
Then the following are equivalent:
	\begin{enumerate}
	\item \label{item:An-iso} $V_1$ and $V_2$ are isomorphic as $\CA_n$-modules.
	\item \label{item:gzero-iso} $V_1$ and $V_2$ are isomorphic as modules over the Lie subalgebra $\gzero \subseteq \CA_n$.
	\item \label{item:D(gzero)-iso} $V_1$ and $V_2$ are isomorphic as modules over the Lie subalgebra $\fD(\gzero) \subseteq \CA_n$.
	\end{enumerate}
\end{proposition}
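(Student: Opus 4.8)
The implications $(\ref{item:An-iso}) \Rightarrow (\ref{item:gzero-iso}) \Rightarrow (\ref{item:D(gzero)-iso})$ are immediate, since each is merely restriction of the module structure along the inclusions of Lie algebras $\fD(\gzero) \subseteq \gzero \subseteq \CA_n$: an $\CA_n$-module isomorphism restricts to a $\gzero$-module isomorphism, which in turn restricts to a $\fD(\gzero)$-module isomorphism. The content of the proposition is therefore the implication $(\ref{item:D(gzero)-iso}) \Rightarrow (\ref{item:An-iso})$, and the plan is to deduce it from the claim that $\fD(\gzero)$ generates $\CA_n$ as an associative algebra. Granting this for the moment: if $\psi \colon V_1 \to V_2$ is an isomorphism of $\fD(\gzero)$-modules, then $\psi$ commutes with the action of every element of $\fD(\gzero)$, hence with the action of the associative subalgebra $\langle \fD(\gzero) \rangle$ that these elements generate; since $\langle \fD(\gzero) \rangle = \CA_n$, the map $\psi$ is an $\CA_n$-module isomorphism, and the cycle $(\ref{item:An-iso}) \Rightarrow (\ref{item:gzero-iso}) \Rightarrow (\ref{item:D(gzero)-iso}) \Rightarrow (\ref{item:An-iso})$ closes.

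To prove that $B := \langle \fD(\gzero) \rangle = \CA_n$, I would use two properties of $B$. First, $B$ is stable under conjugation by $A_n$: the set of transpositions in $S_n$ is a union of conjugacy classes, so (as already observed in the proof of \cref{lemma:lie-algebra-centers}) $\g_n$, hence $\gzero$, hence $\fD(\gzero) = [\gzero,\gzero]$, is stable under conjugation by $S_n$; and since conjugation is an algebra automorphism of $\CA_n$, it preserves the subalgebra $B$. Second, for each simple $\CA_n$-module $V$ the structure map $B \to \End(V)$ is surjective: by \cref{lemma:An-simple-implies-D(g0)-simple}, $V$ is already simple as a module over $\fD(\gzero)$, hence over $B$, so surjectivity follows from the Jacobson density theorem over $\C$. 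Writing $\CA_n \cong \bigoplus_{V} \End(V)$ for the Artin--Wedderburn decomposition (valid since $\CA_n$ is semisimple), with $V$ ranging over the simple $\CA_n$-modules, the second property says precisely that $B$ is a subdirect product of the simple factors $\End(V)$.

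It then remains to rule out any nontrivial ``gluing'' of factors, and this is the step I expect to be the heart of the matter. By Goursat's lemma (together with a routine induction on the number of factors), if $B \neq \CA_n$ then there are two non-isomorphic simple $\CA_n$-modules $V_1, V_2$ and an algebra isomorphism $f \colon \End(V_1) \simrightarrow \End(V_2)$ whose graph equals the image of $B$ in $\End(V_1) \oplus \End(V_2)$. Because $B$ is conjugation-stable, $f$ intertwines the conjugation actions of $A_n$ on $\End(V_1)$ and $\End(V_2)$. By the Skolem--Noether theorem, $f$ has the form $X \mapsto \phi \circ X \circ \phi^{-1}$ for some linear isomorphism $\phi \colon V_1 \to V_2$, and the intertwining property forces $\phi$ to carry one representation of $A_n$ to the other up to a character $A_n \to \C^\times$. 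Since $A_n$ is perfect for $n \geq 5$, that character is trivial, so $\phi$ is an $\CA_n$-module isomorphism $V_1 \cong V_2$, contradicting $V_1 \not\cong V_2$. Hence $B = \CA_n$.

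Thus the entire argument reduces to the rigidity supplied by the Skolem--Noether theorem together with the perfectness of $A_n$, which is precisely where the hypothesis $n \geq 5$ is really used (beyond its role in \cref{lemma:An-simple-implies-D(g0)-simple}); as a byproduct one records that $\fD(\gzero)$ generates $\CA_n$ as an associative algebra.
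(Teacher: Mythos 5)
Your proof is correct, but it takes a genuinely different route from the paper's. The paper adapts Marin's argument: it works directly with the double transpositions $s=(i,j)(k,\ell)$ generating $A_n$, notes that $s - p(s) \in \fD(\gzero)$ (via \cref{cor:g0-reductive} and \cref{lemma:lie-algebra-centers}), so a $\fD(\gzero)$-isomorphism $\phi$ gives $\rho_2(s) = \phi\rho_1(s)\phi^{-1} + \omega\,\id$, and then the relation $s^2=1$ forces $\omega=0$; nothing beyond Schur's Lemma is used. You instead prove the structural statement that the unital associative subalgebra $B$ generated by $\fD(\gzero)$ is all of $\CA_n$, via Burnside/Jacobson density on each Wedderburn block (using \cref{lemma:An-simple-implies-D(g0)-simple}), a Goursat argument to reduce any defect to a gluing of two blocks, Skolem--Noether plus conjugation-stability of $\fD(\gzero)$ to realize the gluing by a linear map intertwining the two representations up to a character, and perfectness of $A_n$ to kill the character; the proposition then follows at once, and you get the generation statement as a byproduct (morally a forerunner of \cref{cor:D(g0)-Lie-subalgebra-CAn}, but proved without circularity since you only invoke results available at this stage). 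One caveat you should make explicit: $\langle \fD(\gzero)\rangle$ must be taken to contain $1_{\CA_n}$ (or you should restrict to the blocks of dimension at least $2$, which suffices for the proposition since the trivial module is the unique one-dimensional simple for $n\geq 5$). Indeed $\fD(\gzero)=[\gzero,\gzero]$ acts by zero on the trivial module, so the non-unital subalgebra it generates has zero image in the trivial Wedderburn block, the Jacobson density step fails there, and the headline claim ``$\fD(\gzero)$ generates $\CA_n$'' is literally false in the non-unital reading; with the unital convention (or the restriction to higher-dimensional blocks) your argument is complete. Both proofs use $n\geq 5$ in comparable ways -- the paper through the single conjugacy class of double transpositions and the uniqueness of the one-dimensional module, you through perfectness of $A_n$ and that same uniqueness -- but the paper's route is more elementary and self-contained, while yours buys a stronger structural conclusion.
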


\begin{proof}
Our argument is an adaptation of the proof of \cite[Proposition 2]{Marin:2007}. The fact that \eqref{item:An-iso} implies \eqref{item:gzero-iso}, and that \eqref{item:gzero-iso} implies \eqref{item:D(gzero)-iso}, is evident. We will show that \eqref{item:D(gzero)-iso} implies \eqref{item:An-iso}. For $n \geq 5$, the trivial module is the unique one-dimensional $\CA_n$-module (see \cite[Theorem 2.5.15]{James:1981}), so we may assume that $V_1$ and $V_2$ are each of dimension at least $2$.

Suppose $\phi: V_1 \to V_2$ is an isomorphism of $\fD(\gzero)$-modules, and let $\rho_1: \CA_n \to \End(V_1)$ and $\rho_2: \CA_n \to \End(V_2)$ be the structure maps for $V_1$ and $V_2$, respectively. Then for all $x \in \fD(\gzero)$, one has $\phi \circ \rho_1(x) = \rho_2(x) \circ \phi$, or equivalently, $\rho_2(x) = \phi \circ \rho_1(x) \circ \phi^{-1}$. Let $s = (i,j)(k,\ell)$ be a generator of $A_n$ from the set \eqref{eq:An-generators}, and set $T = p(s) = \frac{2}{n!} \sum_{\sigma \in A_n} \sigma s \sigma^{-1}$. The elements of the set \eqref{eq:An-generators} form a single conjugacy class in $A_n$ (because the cycle type does not consist of distinct odd integers), so $T$ is independent of the particular choice of $s$. Since $T$ is central in $\CA_n$, Schur's Lemma implies that $\rho_1(T) = c_1  \id_{V_1}$ and $\rho_2(T) = c_2  \id_{V_2}$ for some scalars $c_1,c_2 \in \C$, which also do not depend on the choice of $s$. We have $p(s-T) = p(s)-p(T) = T - T = 0$, so $s-T \in \fD(\gzero)$ by \cref{cor:g0-reductive} and \cref{lemma:lie-algebra-centers}. Then
	\begin{align*}
	\rho_2(s) - c_2  \id_{V_2} &= \rho_2(s-T) \\
	&= \phi \circ \rho_1(s-T) \circ \phi^{-1} \\
	&= \phi \circ \rho_1(s) \circ \phi^{-1} - \phi \circ (c_1  \id_{V_1}) \circ \phi^{-1} \\
	&= \phi \circ \rho_1(s) \circ \phi^{-1} - c_1  \id_{V_2},
	\end{align*}
or equivalently,
	\begin{equation} \label{eq:rho2=rho1+omega}
	\rho_2(s) = \phi \circ \rho_1(s) \circ \phi^{-1} + \omega \cdot \id_{V_2},
	\end{equation}
where $\omega = c_2 - c_1$. Squaring both sides of \eqref{eq:rho2=rho1+omega}, and using the fact that $s^2 = 1$, we get
	\[
	\id_{V_2} = {\id_{V_2}} + 2\omega \cdot \phi \circ \rho_1(s) \circ \phi^{-1} + \omega^2 \cdot \id_{V_2},
	\]
or equivalently, $\omega^2 \cdot \id_{V_2} = -2 \omega \cdot \phi \circ \rho_1(s) \circ \phi^{-1}$. The scalar $\omega$ does not depend on the choice of $s$, so if $\omega \neq 0$, we would deduce first for all $s$ in the set \eqref{eq:An-generators}, and then for all $s \in A_n$ by multiplicativity, that $\rho_1(s)$ is equal to a nonzero scalar multiple of $\id_{V_1}$. Since $\dim(V_1) \geq 2$ by assumption, this would contradict the irreducibility of $V_1$. Then $\omega = 0$, and \eqref{eq:rho2=rho1+omega} implies first for all $s$ in the set \eqref{eq:An-generators}, and then for all $s \in A_n$ by multiplicativity, that $\phi \circ \rho_1(s) = \rho_2(s) \circ \phi^{-1}$; that is, $\phi$ is an isomorphism of $\CA_n$-modules.
\end{proof}

\begin{corollary} \label{module-iso-equivalent}
Let $n \geq 5$, set $\g = \g_n$, and let $V_1$ and $V_2$ be two simple $\CS_n$-super\-modules. Then the following statements (in which `isomorphic' is taken to mean `isomorphic via a homogeneous isomorphism') are equivalent:
	\begin{enumerate}
	\item \label{item:Sn-iso-general} $V_1$ and $V_2$ are isomorphic as $\CS_n$-supermodules.
	\item \label{item:An-iso-general} $V_1$ and $V_2$ are isomorphic as $\CA_n$-supermodules.
	\item \label{item:gzero-iso-general} $V_1$ and $V_2$ are isomorphic as supermodules over the Lie subalgebra $\gzero \subseteq \CA_n$.
	\item \label{item:D(gzero)-iso-general} $V_1$ and $V_2$ are isomorphic as supermodules over the Lie subalgebra $\fD(\gzero) \subseteq \CA_n$.
	\end{enumerate}
\end{corollary}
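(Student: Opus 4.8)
The implications \eqref{item:Sn-iso-general} $\Rightarrow$ \eqref{item:An-iso-general} $\Rightarrow$ \eqref{item:gzero-iso-general} $\Rightarrow$ \eqref{item:D(gzero)-iso-general} are immediate: restricting a module structure along the chain of inclusions $\fD(\gzero) \subseteq \gzero \subseteq \CA_n \subseteq \CS_n$ carries a homogeneous isomorphism for the larger structure to one for the smaller, and since $\gzero$ is concentrated in even superdegree, a homogeneous $\fD(\gzero)$-supermodule homomorphism restricts to an ordinary $\fD(\gzero)$-module homomorphism on each homogeneous subspace. The content of the corollary is thus the implication \eqref{item:D(gzero)-iso-general} $\Rightarrow$ \eqref{item:Sn-iso-general}, and the plan is to extract from the $\fD(\gzero)$-structure of a simple $\CS_n$-supermodule $V$ a complete invariant for $V$ up to homogeneous isomorphism, namely the unordered pair $\smallset{[V_{\zero}],[V_{\one}]}$ of isomorphism classes of the two homogeneous subspaces of $V$ regarded as $\CA_n$-modules. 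This avoids having to reassemble $\CA_n$-module isomorphisms into $\CS_n$-supermodule isomorphisms.

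First I would check that, for $n \geq 5$, this pair is determined by the $\fD(\gzero)$-supermodule $V$. By \cref{cor:g0-reductive} together with \cref{lemma:submodule-equivalent}, each of $V_{\zero}$ and $V_{\one}$ is a simple $\CA_n$-module, hence a simple $\fD(\gzero)$-module by \cref{lemma:An-simple-implies-D(g0)-simple}. Given a homogeneous $\fD(\gzero)$-supermodule isomorphism $\phi\colon V_1 \to V_2$, the map $\phi$ restricts to $\fD(\gzero)$-module isomorphisms between the homogeneous subspaces of $V_1$ and those of $V_2$, respecting the $\Z_2$-grading if $\phi$ is even and interchanging the two homogeneous components if $\phi$ is odd; by \cref{prop:irred-equivalent} each of these restrictions is in fact an isomorphism of $\CA_n$-modules. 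In either case one concludes that $\smallset{[(V_1)_{\zero}],[(V_1)_{\one}]} = \smallset{[(V_2)_{\zero}],[(V_2)_{\one}]}$ as unordered pairs of $\CA_n$-isomorphism classes.

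It then remains to see that this pair determines $V$ up to homogeneous $\CS_n$-supermodule isomorphism. By \cref{lemma:super-CSn-as-ordinary} and \cref{prop:Wlambda-decomposition}, $V$ is homogeneously isomorphic to $W^\lambda$ (or its parity shift) for a unique $\lambda \in \Pbar(n)$; for $\lambda \in F_n$ the homogeneous subspaces of $W^\lambda$ realize the two non-isomorphic simple $\CA_n$-modules $S^{\lambda^+}$ and $S^{\lambda^-}$, while for $\lambda \in E_n$ they are two copies of the simple $\CA_n$-module $\Res_{A_n}^{S_n}(S^\lambda)$. Thus the pair has two distinct entries exactly when $\lambda \in F_n$, in which case $\Ind_{A_n}^{S_n}$ of either entry recovers $S^\lambda$ by \cref{prop:index-2-restriction}, and it has a repeated entry exactly when $\lambda \in E_n$, in which case \cref{prop:index-2-restriction} shows the entry determines $\smallset{\lambda,\lambda'}$ and hence $\lambda$; in both cases distinct elements of $\Pbar(n)$ give distinct pairs. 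Consequently $\lambda_1 = \lambda_2$, so $V_1$ and $V_2$ are each homogeneously isomorphic to $W^{\lambda_1}$, and hence to one another, which gives \eqref{item:Sn-iso-general}. The crucial input is \cref{prop:irred-equivalent}, which is already available, so this is not really the main obstacle; the only delicate point is keeping track of parities when $\phi$ is odd, together with the elementary Clifford-theoretic bookkeeping that shows the unordered pair of $\CA_n$-classes is a complete invariant.
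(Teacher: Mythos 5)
Your proposal is correct and follows essentially the same route as the paper: pass to the homogeneous subspaces (simple $\CA_n$-modules), apply \cref{prop:irred-equivalent}, and invoke the classification of simple $\CS_n$-supermodules to see that the (unordered, since odd isomorphisms are allowed) pair of $\CA_n$-isomorphism classes of the homogeneous subspaces determines the supermodule up to homogeneous isomorphism. You merely spell out the Clifford-theoretic bookkeeping that the paper leaves implicit, so no substantive difference.
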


\begin{proof}
The classification of the simple $\CS_n$-supermodules in Section \ref{subsec:group-algebra-Sn} shows that the simple $\CS_n$-supermodules are determined by their restrictions to $\CA_n$ and by the homogeneous degrees in which their simple $\CA_n$-factors are concentrated. Thus passing to the homogeneous subspaces of $V_1$ and $V_2$ (which are simple $\CA_n$-modules), the result follows by \cref{prop:irred-equivalent}.
\end{proof}

Let $V$ be a $\CA_n$-module with structure map $\rho: \CA_n \to \End(V)$. There are two evident ways to define an action of the Lie algebra $\fD(\gzero) \subseteq \CA_n$ on the dual space $V^*$. The first is the restriction from $\CA_n$ to $\fD(\gzero)$ of the group-theoretic dual module, $V^{*,\Grp}$, described before \cref{remark:self-dual}. The second module structure, which we denote $V^{*,\Lie}$, is via the contragredient action of a Lie algebra, defined for $x \in \fD(\gzero)$, $\phi \in V^*$, and $v \in V$ by $(x.\phi)(v) = -\phi(x.v)$. Fixing a basis for $V$ and the corresponding dual basis for $V^*$, the structure maps $\rho_{\Grp}$ and $\rho_{\Lie}$ for $V^{*,\Grp}$ and $V^{*,\Lie}$ are related to $\rho$ by $\rho_{\Grp}(x) = \rho(\imath(x))^T$ and $\rho_{\Lie}(x) = - \rho(x)^T$, where $\imath: \CA_n \to \CA_n$ is linear extension of the group inversion map $\sigma \mapsto \sigma^{-1}$, and $u^T$ is the transpose of $u$.

\begin{proposition} \label{prop:An-Lie-dual}
Let $n \geq 5$, set $\g = \g_n$, and let $V$ and $W$ be simple $\CA_n$-modules of dimension greater than $1$. Then as $\fD(\gzero)$-modules, $W \not\cong V^{*,\Lie}$.
\end{proposition}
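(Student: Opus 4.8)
The plan is to re-run the argument from the proof of \cref{prop:irred-equivalent}, now tracking the sign by which the Lie-theoretic dual differs from the group-theoretic dual on involutions. Suppose toward a contradiction that $\phi\colon W \to V^{*,\Lie}$ is an isomorphism of $\fD(\gzero)$-modules, and let $\rho_W\colon \CA_n \to \End(W)$ and $\rho_V\colon \CA_n \to \End(V)$ be the structure maps of $W$ and $V$. With respect to a basis of $V$ and the dual basis of $V^*$, the action of $x \in \fD(\gzero)$ on $V^{*,\Lie}$ is $-\rho_V(x)^T$, so the hypothesis reads $\phi \circ \rho_W(x) \circ \phi^{-1} = -\rho_V(x)^T$ for all $x \in \fD(\gzero)$.

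First I would fix a generator $s = (i,j)(k,\ell) \in A_n$ from the set \eqref{eq:An-generators} and set $T = p(s)$, a central element of $\CA_n$ that is independent of the choice of $s$ because \eqref{eq:An-generators} is a single $A_n$-conjugacy class. Exactly as in the proof of \cref{prop:irred-equivalent}, one has $s - T \in \fD(\gzero)$ by \cref{cor:g0-reductive} and \cref{lemma:lie-algebra-centers}, and $\rho_W(T) = c_W\cdot\id$, $\rho_V(T) = c_V\cdot\id$ by Schur's Lemma. Applying $\phi$ to the element $s - T$ of $\fD(\gzero)$ gives
\[
	\phi \circ \rho_W(s) \circ \phi^{-1} = -\rho_V(s)^T + \omega \cdot \id, \qquad \omega := c_V + c_W,
\]
with $\omega$ independent of $s$. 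Squaring this identity and using $s^2 = 1$ (so that both $\rho_W(s)$ and $\rho_V(s)^T$ are involutions) yields $2\omega \cdot \rho_V(s)^T = \omega^2 \cdot \id$. If $\omega \neq 0$, this makes $\rho_V(s)$ a scalar matrix, hence makes $\rho_V(\sigma)$ scalar for every $\sigma \in A_n$ since the conjugates of $s$ constitute the generating set \eqref{eq:An-generators}; that contradicts $\dim(V) > 1$. Therefore $\omega = 0$, and $\phi \circ \rho_W(s) \circ \phi^{-1} = -\rho_V(s)^T$ for every $s$ in \eqref{eq:An-generators}.

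The last step feeds this into a relation among generators having an odd number of factors. Since $n \geq 5$, the three elements $a = (12)(34)$, $b = (13)(24)$, $c = (14)(23)$ all lie in \eqref{eq:An-generators}, and together with the identity they form a Klein four-subgroup of $A_n$; in particular $abc = cba = 1$. On the one hand $\phi \circ \rho_W(abc) \circ \phi^{-1} = \id$, while on the other hand
\[
	\phi \circ \rho_W(abc) \circ \phi^{-1} = \big({-\rho_V(a)^T}\big)\big({-\rho_V(b)^T}\big)\big({-\rho_V(c)^T}\big) = -\big(\rho_V(c)\rho_V(b)\rho_V(a)\big)^T = -\rho_V(cba)^T = -\id.
\]
Hence $\id = -\id$ on the nonzero space $V^*$, which is absurd, and no such $\phi$ exists.

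I expect the only real subtlety to be the organization of the first two paragraphs, which is essentially bookkeeping transported from \cref{prop:irred-equivalent} (in particular the identification of $s - T$ as an element of $\fD(\gzero)$ and the Schur-theoretic scalars). The one genuinely new ingredient is the choice of the odd-length relation $abc = 1$: because each transpose-and-negate factor $-\rho_V(s)^T$ differs from the corresponding group element by a sign, a relation with an odd number of factors accumulates an overall $(-1)$, which is precisely what converts the assumed $\fD(\gzero)$-isomorphism into the contradiction $\id = -\id$.
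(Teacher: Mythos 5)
Your argument is correct, but it takes a genuinely different route from the paper. The paper proves the proposition by induction on $n$: the base case $n=5$ uses \cref{cor:D(g0)-Lie-subalgebra-CAn} together with the classification of automorphisms of $\fsl(V)$ (inner versus negative-transpose) and a dimension count, and the inductive step uses the multiplicity-free restriction to $A_{n-1}$ from \cref{remark:An-restriction} together with \cref{lemma:An-simple-implies-D(g0)-simple} and \cref{prop:irred-equivalent}. You instead run the central-projection trick from the proof of \cref{prop:irred-equivalent} (so $s-T\in\fD(\gzero)$ with $T=p(s)$, and the Schur scalars force $\omega=c_V+c_W=0$, now with a plus sign because of the transpose-and-negate), obtaining $\phi\circ\rho_W(s)\circ\phi^{-1}=-\rho_V(s)^T$ for every $s$ in the generating class \eqref{eq:An-generators}, and then exploit the odd-length relation $abc=1$ in the Klein four-subgroup $\{1,(12)(34),(13)(24),(14)(23)\}$ to accumulate an overall sign and reach $\id=-\id$. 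I checked the details: the identity $\omega^2\id=2\omega\rho_V(s)^T$ and the use of $\dim V>1$ to force $\omega=0$ are fine, $(AB C)^T$ reverses the order exactly as you use it, and $cba=abc=1$ since the Klein group is abelian; moreover the only external inputs are \cref{lemma:lie-algebra-centers} and \cref{cor:g0-reductive} for the value $n$ at hand, both available at this point of the paper's logical ordering, so there is no circularity. What your approach buys is a short, non-inductive, self-contained proof that needs neither the $n=5$ base case nor \cref{cor:D(g0)-Lie-subalgebra-CAn} nor the automorphism theory of $\fsl_m$ (and in fact never uses $\dim W>1$); what the paper's approach buys is that it sits naturally inside the grand-loop induction and reuses machinery (restriction rules, the Artin--Wedderburn corollary) already developed for other purposes.
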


\begin{proof}  We argue by induction on $n$.  First suppose $n = 5$. As discussed in Section \ref{S:Imageofgn}, we know for $n=5$ that \cref{theorem:finaltheorem} is true, and hence so is \cref{cor:D(g0)-Lie-subalgebra-CAn} (whose proof, for a given value of $n$, depends only on \eqref{eq:D(CSn)}, \cref{theorem:finaltheorem}, and \cref{cor:Wlambda(D)} for the same value of $n$). And by \cite[Theorem 2.5.15]{James:1981}, the trivial module is the unique one-dimensional $\CA_5$-module, and all other simple $\CA_5$-modules are of dimension $\geq 3$. There are now two cases: $V \cong W$ or $V \not\cong W$. In the first case we may assume that $V = W$. The structure maps $\rho: \fD(\gzero) \to \End(V)$ and $\rho_{\Lie}: \fD(\gzero) \to \End(V^{*,\Lie})$ have the same kernel, and hence both factor through the canonical projection $\fD(\gzero) \twoheadrightarrow \fsl(V)$ in \eqref{eq:D(g0)-Artin-Wedderburn}. Then $V \not\cong V^{*,\Lie}$ because the natural representation of $\fsl(V)$ is not self-dual if $\dim(V) \geq 3$. In the case $V \not\cong W$, we see from \cref{cor:D(g0)-Lie-subalgebra-CAn} that the image of the structure map $\fD(\gzero) \to \End(V) \oplus \End(W)$ has dimension $\dim(\fsl(V)) + \dim(\fsl(W))$. On the other hand, if $W \cong V^{*,\Lie}$, then up to a change of basis for $W$, the structure map would be of the form $x \mapsto (\rho(x),-\rho(x)^T)$, and hence its image would have dimension at most $\dim(\fsl(V))$. Then $W \not\cong V^{*,\Lie}$.

Now suppose $n \geq 6$ and that the claim is true for $n-1$. By \cref{remark:An-restriction}, $V$ and $W$ admit multiplicity-free restrictions to $A_{n-1}$, say, $V = \bigoplus_{i=1}^r V_i$ and $W = \bigoplus_{j=1}^s W_j$. By \cref{lemma:An-simple-implies-D(g0)-simple} and \cref{prop:irred-equivalent}, these are also decompositions of $V$ and $W$ into distinct simple modules for the Lie algebra $\fD_{n-1} := \fD((\g_{n-1})_{\zero})$. Then $V^{*,\Lie} = \bigoplus_{i=1}^r V_i^{*,\Lie}$ is a decomposition of $V^{*,\Lie}$ into distinct simple $\fD_{n-1}$-modules. Now a $\fD(\gzero)$-module isomorphism $W \cong V^{*,\Lie}$ will induce for each $j$ a $\fD_{n-1}$-module isomorphism $W_j \cong V_i^{*,\Lie}$ for some $i$. Since $n \geq 6$ and $\dim(W) > 1$, it follows from \cref{remark:An-restriction} and \cite[Theorem 2.5.15]{James:1981} that at least one of the $W_j$ is of dimension at least $3$. But then the isomorphism $W_j \cong V_i^{*,\Lie}$  contradicts the inductive assumption. Hence $W \not\cong V^{*,\Lie}$.
\end{proof}

\subsection{Structure of \texorpdfstring{$\g_n$}{gn}} \label{subsec:gn-structure}

In this section let $n \geq 2$, let $\g = \g_n$, and set $\fD = \fD(\gzero)$. For $\lambda \in \Pbar(n)$, let $\rho_\lambda: \fD \to \End(W^\lambda)_{\zero}$ be the $\fD$-module structure map. The image $\rho_\lambda(\fD) = W^\lambda(\fD)$ of $\rho_\lambda$ is given in \cref{cor:Wlambda(D)}. Set $\fD_\lambda = \ker(\rho_\lambda)$, and let $\fD^\lambda$ be the orthogonal complement of $\fD_{\lambda}$ with respect to the Killing form on $\fD$. Then $\fD^\lambda$ is an ideal in $\fD$, $\fD = \fD^\lambda \oplus \fD_\lambda$ as a Lie algebra, and $\rho_\lambda$ induces a Lie algebra isomorphism $\fD^\lambda \cong W^\lambda(\fD)$. Thus $\fD^\lambda$ is a simple ideal in $\fD$ (if $\lambda \in E_n$), or is uniquely expressible as a direct sum of two simple ideals in $\fD$ (if $\lambda \in F_n$).

\begin{proposition} \label{prop:Dlambda-Dmu-intersection}
Let $n \geq 5$, and let $\lambda,\mu \in \Pbar(n)$. If $\fD^\lambda \cap \fD^\mu \neq 0$, then $\lambda = \mu$.
\end{proposition}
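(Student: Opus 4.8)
The plan is to show that if $\fD^\lambda$ and $\fD^\mu$ share a nonzero ideal, then the simple $\fD$-modules $W^\lambda_{\zero}$ and $W^\mu_{\zero}$ (equivalently, by \cref{cor:Wlambda(D)}, the simple $\CA_n$-modules $S^\lambda$, $S^{\lambda^\pm}$, etc.) must be related in a way forbidden by the combinatorics of partitions. First I would observe that since $\fD$ is semisimple, both $\fD^\lambda$ and $\fD^\mu$ are sums of simple ideals of $\fD$; if $\fD^\lambda \cap \fD^\mu \neq 0$, then $\fD^\lambda$ and $\fD^\mu$ have a common simple ideal $\fa$. Now $\rho_\lambda$ restricts to an isomorphism $\fD^\lambda \xrightarrow{\sim} W^\lambda(\fD)$, so $\fa$ maps isomorphically onto a simple ideal of $W^\lambda(\fD)$, which by \cref{cor:Wlambda(D)} is either $\fsl(W_\lambda)$ (if $\lambda \in E_n$) or one of $\fsl(W^\lambda_{\zero})$, $\fsl(W^\lambda_{\one})$ (if $\lambda \in F_n$); and likewise for $\mu$. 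The corresponding homogeneous subspace of $W^\lambda$ (one of $S^\lambda$, $S^{\lambda^+}$, $S^{\lambda^-}$) is then a simple module for $\fa$ on which $\fD_\lambda$ acts as zero, so it is (the natural module of) a simple module for the simple Lie algebra $\fa$, and similarly the corresponding homogeneous subspace of $W^\mu$ is a simple $\fa$-module.

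The key point is then this: the simple ideal $\fa$ of $\fD$ acts on $\bigoplus_{\nu} W^\nu$, and on each homogeneous subspace it acts either trivially or through its natural (vector) representation or its dual. I would invoke \cref{prop:An-Lie-dual} (for $\fD(\gzero)$) to rule out the dual possibility: two simple $\CA_n$-modules of dimension $> 1$ are never related as $W \cong V^{*,\Lie}$ over $\fD(\gzero)$. Combined with the fact that over a simple Lie algebra of the form $\fsl(m)$ with $m \geq 3$, the natural module is the unique (up to duality) faithful simple module of dimension $m$, this forces the two homogeneous subspaces in question — one coming from $\lambda$, one from $\mu$ — to be isomorphic as $\fD(\gzero)$-modules, hence, by \cref{prop:irred-equivalent}, isomorphic as $\CA_n$-modules. (The low-dimensional exceptions $\dim(W^\nu) = 2$, where $\fsl = 0$, are handled separately: in those cases $\fD^\nu$ contributes nothing, or one appeals to \cref{lemma:lower-bound-dim} and the hypothesis $n \geq 5$, noting the precise list of small cases there.)

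Having identified homogeneous subspaces of $W^\lambda$ and $W^\mu$ that are isomorphic as $\CA_n$-modules, I would finish using the classification of simple $\CA_n$-modules via restriction of Specht modules from Section~\ref{subsec:group-algebra-Sn}. For $\lambda, \mu \in E_n$ this gives $S^\lambda \cong S^\mu$ as $\CA_n$-modules, which by \cref{prop:index-2-restriction} forces $\mu \in \set{\lambda, \lambda'}$, i.e.\ $\lambda = \mu$ in $\Pbar(n)$; for $\lambda \in F_n$ the relevant $\CA_n$-modules are the $S^{\lambda^\pm}$, which are pairwise non-isomorphic across distinct symmetric $\lambda$, again giving $\lambda = \mu$; and a mixed case $\lambda \in E_n$, $\mu \in F_n$ is impossible since $S^\lambda$ (simple of Type Q restricted, i.e.\ $S^\lambda|_{A_n}$ stays simple) cannot be isomorphic to an $S^{\mu^\pm}$ — this follows because $S^\mu|_{A_n}$ is \emph{reducible} while $S^\lambda|_{A_n}$ is irreducible, so their composition factors have different dimensions.

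The main obstacle I anticipate is the bookkeeping around the low-dimensional cases where some $\fsl$ factor vanishes (the $\dim(W^\nu) = 2$ situations, and possibly $\dim = 6$ for $\lambda = (3,1,1)$ when $n=5$), since there $\fD^\nu$ need not be simple or may even be zero, so the clean ``simple ideal $\fa$'' argument needs care; this is exactly why the hypothesis $n \geq 5$ appears, and one must lean on the explicit list in \cref{lemma:lower-bound-dim} together with \cref{lemma:gn-equality} for small $n$. A secondary subtlety is making sure the isomorphism of $\fa$-modules promoted from the common ideal is genuinely a $\fD(\gzero)$-module isomorphism (so that \cref{prop:irred-equivalent} applies) rather than merely an isomorphism over the single ideal $\fa$ — this requires checking that $\fD_\lambda$ and $\fD_\mu$ both annihilate the modules in question, which follows from the definition of $\fD^\nu$ as the orthogonal complement of $\ker \rho_\nu$.
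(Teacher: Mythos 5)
Your proposal is correct and follows essentially the same route as the paper: the common simple factor $\fsl(m)$ forces the relevant homogeneous subspaces of $W^\lambda$ and $W^\mu$ to be, as $\fD(\gzero)$-modules, either isomorphic or mutually dual, the dual case is excluded by \cref{prop:An-Lie-dual}, and then \cref{prop:irred-equivalent} (the paper also invokes \cref{module-iso-equivalent}) gives an isomorphism of simple $\CA_n$-modules and hence $\lambda=\mu$; your appeal to the two $m$-dimensional simple $\fsl(m)$-modules is just a reformulation of the paper's inner/outer automorphism dichotomy for $\fsl(m)$. One small repair: in the mixed case your justification ``$S^\mu|_{A_n}$ is reducible while $S^\lambda|_{A_n}$ is irreducible, so their composition factors have different dimensions'' is a non sequitur (those dimensions could well coincide); the needed non-isomorphism $S^\lambda \not\cong S^{\mu^\pm}$ follows instead directly from the uniqueness clause of \cref{prop:index-2-restriction}.
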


\begin{proof}
Suppose $\fa = \fD^\lambda \cap \fD^\mu \neq 0$. There are several cases to consider:
	\begin{enumerate}
	\item \label{item:lambda-mu-En} $\lambda,\mu \in E_n$. Then $\fD^\lambda = \fa = \fD^\mu$, and the maps $\rho_\lambda$ and $\rho_\mu$ induce isomorphisms $\fa \cong W^\lambda(\fD)$ and $\fa \cong W^\mu(\fD)$.

	\item \label{item:lambda-En-mu-Fn} $\lambda \in E_n$ and $\mu \in F_n$. Then $\fD^\lambda = \fa$, and $\fa$ is one of the two simple ideals that comprise $\fD^\mu$. The map $\rho_\lambda$ induces an isomorphism $\fa \cong W^\lambda(\fD)$, while the map $\rho_\mu$ maps $\fa$ isomorphically into (precisely) one of the summands $\fsl(W^\mu_{\zero})$ or $\fsl(W^\mu_{\one})$ of $W^\mu(\fD)$. (There is also the symmetric case $\lambda \in F_n$ and $\mu \in E_n$, which we omit.)

	\item \label{item:lambda-mu-Fn-one-simple} $\lambda,\mu \in F_n$ and $\fa$ is a simple ideal. Then $\fa$ is one of the two simple ideals that comprise $\fD^\lambda$, and also for $\fD^\mu$. The map $\rho_\lambda$ sends $\fa$ isomorphically into (precisely) one of the summands $\fsl(W^\lambda_{\zero})$ or $\fsl(W^\lambda_{\one})$ of $W^\lambda(\fD)$, and similarly for $\rho_\mu$.

	\item \label{item:lambda-mu-Fn-two-simples} $\lambda,\mu \in F_n$ and $\fD^\lambda = \fa = \fD^\mu$. Then the maps $\rho_\lambda$ and $\rho_\mu$ induce isomorphisms $\fa \cong W^\lambda(\fD)$ and $\fa \cong W^\mu(\fD)$.
	\end{enumerate}
In each of the four cases, one must have $\dim(W^\lambda) = \dim(W^\mu)$. Then fixing homogeneous bases for $W^\lambda$ and $W^\mu$, we can make the identifications $W^\lambda = \C^{m|m} = W^\mu$ for some $m \in \N$, and we can interpret $\rho_\lambda$ and $\rho_\mu$ as Lie algebra homomorphisms
	\[
	\fD \to \End(\C^{m|m})_{\zero} = \End(\C^{m|0}) \oplus \End(\C^{0|m}) = \End(\C^m) \oplus \End(\C^m).
	\]

First consider case \eqref{item:lambda-mu-En}, in which $\lambda,\mu \in E_n$. Then the images of $\rho_\lambda$ and $\rho_\mu$ are each equal to the diagonal copy of $\fsl(\C^m)$ in $\End(\C^{m|m})_{\zero}$, and the composite induced map
	\[
	\fsl(\C^m) \xrightarrow{\rho_\lambda^{-1}} \fa \xrightarrow{\rho_\mu} \fsl(\C^m)
	\]
is a Lie algebra automorphism. Lie algebra automorphisms of $\fsl(\C^m)$ come in two forms:
	\begin{enumerate}
	\item $X \mapsto gXg^{-1}$ for some $g \in GL(\C^m)$, or
	\item $X \mapsto -(gX^t g^{-1})$ for some $g \in GL(\C^m)$, where $X^t$ denotes the transpose of $X$;
	\end{enumerate}
see \cite[IX.5]{Jacobson:1979}.  It should be noted that $X\mapsto -X^{t}$ is an automorphism of $\fsl(\C^m)$ that is not obtained by conjugation by $g \in GL(\C^m)$ when $n \geq 3$; again see \cite[IX.5]{Jacobson:1979}.  If $\rho_\mu \circ \rho_\lambda^{-1}$ is of the second form, then the $\fD$-module structure on $\C^{m|m}$ afforded by $\rho_\lambda$ is isomorphic to the dual of the $\fD$-module structure afforded by $\rho_\mu$, i.e., $W^\lambda \cong (W^\mu)^{*,\Lie}$ as $\fD$-modules. Passing to the homogeneous subspaces of $W^\lambda$ and $W^\mu$ (which are simple $\CA_n$-modules), this contradicts \cref{prop:An-Lie-dual}. Then $\rho_\mu \circ \rho_\lambda^{-1}$ must be of the first form, meaning the $\fD$-module structures on $\C^{m|m}$ afforded by $\rho_\lambda$ and $\rho_\mu$ are isomorphic, i.e., $W^\lambda \cong W^\mu$ as $\fD$-modules. Applying \cref{module-iso-equivalent} 
this implies that $W^\lambda \cong W^\mu$ as $\CS_n$-supermodules, and hence $\lambda = \mu$.

The reasoning for the other cases proceeds similarly. For example, in cases \eqref{item:lambda-En-mu-Fn} and \eqref{item:lambda-mu-Fn-one-simple}, one deduces that one of the $\fD$-module composition factors in $W^\mu$ is isomorphic to one (resp.\ both, if $\lambda \in E_n$) of the $\fD$-module composition factors 
in $W^\lambda$. In case \eqref{item:lambda-mu-Fn-two-simples}, one deduces that (both of) the $\fD$-module composition factors $W^\mu$ are isomorphic to the $\fD$-module composition factors in $W^\lambda$, perhaps up to 
parity change. In any case, 
\cref{prop:irred-equivalent} then implies that as $\CA_n$-modules, $W^\lambda$ and $W^\mu$ have simple constituents in common, which is only possible if $\lambda = \mu$. (In particular, cases \eqref{item:lambda-En-mu-Fn} and \eqref{item:lambda-mu-Fn-one-simple} are impossible.)
\end{proof}

\begin{corollary} \label{cor:D(gzero)-structure}
Let $n \geq 5$. Then $\fD = \bigoplus_{\lambda \in \Pbar(n)} \fD^\lambda$.
\end{corollary}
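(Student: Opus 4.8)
The plan is to derive this from \cref{prop:Dlambda-Dmu-intersection} together with the semisimplicity of $\fD$ recorded in \cref{cor:g0-reductive} and the faithfulness of the $\fD$-module $\bigoplus_{\lambda \in \Pbar(n)} W^\lambda$. First I would invoke semisimplicity to write $\fD = \fa_1 \oplus \cdots \oplus \fa_r$ as a direct sum of simple ideals. Since any ideal of a semisimple Lie algebra is the sum of the simple ideals it contains, each $\fD^\lambda$ and each $\fD_\lambda = \ker(\rho_\lambda)$ is a sub-sum $\bigoplus_{i \in S} \fa_i$ for an appropriate subset $S$ of $\set{1,\ldots,r}$; and because $\fD = \fD^\lambda \oplus \fD_\lambda$, each $\fa_i$ lies in exactly one of $\fD^\lambda$, $\fD_\lambda$.

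Next I would show $\sum_{\lambda \in \Pbar(n)} \fD^\lambda = \fD$. The module $\bigoplus_{\lambda \in \Pbar(n)} W^\lambda$ is a faithful $\CS_n$-module, hence faithful over $\gzero$ and over $\fD$, so $\bigcap_{\lambda \in \Pbar(n)} \fD_\lambda = \bigcap_{\lambda \in \Pbar(n)} \ker(\rho_\lambda) = 0$. Therefore each simple ideal $\fa_i$ fails to be contained in at least one $\fD_\lambda$, and by the dichotomy of the previous paragraph this forces $\fa_i \subseteq \fD^\lambda$ for that particular $\lambda$. Hence every $\fa_i$ appears inside some $\fD^\lambda$, and consequently $\sum_{\lambda \in \Pbar(n)} \fD^\lambda = \fa_1 \oplus \cdots \oplus \fa_r = \fD$.

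Finally I would use \cref{prop:Dlambda-Dmu-intersection} to upgrade this to an internal direct sum: for $\lambda \neq \mu$ the ideals $\fD^\lambda$ and $\fD^\mu$ have zero intersection, hence involve disjoint subsets of $\set{\fa_1,\ldots,\fa_r}$. So the subsets attached to the distinct $\fD^\lambda$ are pairwise disjoint, the sum $\sum_{\lambda \in \Pbar(n)} \fD^\lambda$ is direct, and combining with the previous paragraph we get $\fD = \bigoplus_{\lambda \in \Pbar(n)} \fD^\lambda$.

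There is no real obstacle remaining, since all the substance has already been isolated in \cref{prop:Dlambda-Dmu-intersection}. The only point to be careful about is the elementary bookkeeping for ideals in a semisimple Lie algebra --- that an ideal is exactly the span of the simple ideals it contains, and that the ideal decomposition $\fD = \fD^\lambda \oplus \fD_\lambda$ respects the decomposition into simple ideals --- so that pairwise-trivial intersection genuinely yields an internal direct sum rather than merely a spanning sum.
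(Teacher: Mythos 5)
Your proposal is correct and follows essentially the same route as the paper: directness of the sum from \cref{prop:Dlambda-Dmu-intersection} and spanning from the faithfulness of $\bigoplus_{\lambda \in \Pbar(n)} W^\lambda$ (equivalently $\bigcap_\lambda \fD_\lambda = 0$). Your explicit bookkeeping with the simple ideals of the semisimple algebra $\fD$ just fills in the details the paper leaves implicit, in particular why pairwise-trivial intersections of the ideals $\fD^\lambda$ suffice for an internal direct sum.
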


\begin{proof}
The sum $\sum_{\lambda \in \Pbar(n)} \fD^\lambda$ is a direct sum as a consequence of \cref{prop:Dlambda-Dmu-intersection}, and the sum is equal to all of $\fD$ as a consequence of the module structure map $\fD \to \bigoplus_{\lambda \in \Pbar(n)} \End(W^\lambda)$, $\sigma \mapsto \bigoplus_{\lambda \in \Pbar(n)} W^\lambda(\sigma)$, being faithful.
\end{proof}

\begin{theorem}\label{theorem:finaltheorem}
Let $n \geq 2$. Then $\g_n = \fD(\CS_n) + \C \cdot T_n$.
\end{theorem}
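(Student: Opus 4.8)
The plan is to run the last step of the grand-loop induction. For $n\in\{2,3,4,5\}$ the statement is \cref{lemma:gn-equality}, so one may assume $n\geq 6$ and that every earlier result in Sections~\ref{S:Imageofgn}--\ref{subsec:gn-structure} is available (for $n-1$ and for $n$ up through this point). By \eqref{eq:gn-subset-derived-plus-Tn} one already has $\g_n\subseteq\fD(\CS_n)+\C\cdot T_n$, a direct sum since $T_n\notin\fD(\CS_n)$, and $T_n\in\g_n$; so the whole problem reduces to proving the reverse inclusion $\fD(\CS_n)\subseteq\g_n$. Using \cref{cor:CSn-as-superalgebra} I would view $\fD(\CS_n)=\bigoplus_{\lambda\in\Pbar(n)}\fa^\lambda$ inside $\CS_n$, where $\fa^\lambda=\sq(W^\lambda)$ for $\lambda\in E_n$ and $\fa^\lambda=\fsl(W^\lambda)$ for $\lambda\in F_n$ (each a Lie superideal of $\Lie(\CS_n)$, and $\fa^\lambda$ contains the idempotent $e^\lambda\in\CS_n$ that acts as $\id_{W^\lambda}$ on the $\lambda$-block and as $0$ on the others).

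The heart of the proof is to show that the odd part $\fD(\CS_n)_{\one}=\bigoplus_\lambda\fa^\lambda_{\one}$ lies in $\gone$. Write $\fD=\fD(\gzero)=\bigoplus_\lambda\fD^\lambda$; by \cref{cor:g0-reductive} and \cref{cor:D(gzero)-structure} this Lie algebra is semisimple, so $\gone$---a submodule of the $\fD$-module $\fD(\CS_n)_{\one}\oplus\C\cdot T_n$ (the summand $\C\cdot T_n$ being $\fD$-invariant because $T_n$ is central in $\abs{\CS_n}$)---is completely reducible. Now $\fD^\mu\subseteq\End(W^\mu)$ annihilates the block $\fa^\lambda$ whenever $\mu\neq\lambda$, and, combining \cref{cor:Wlambda(D)} with \cref{lemma:lower-bound-dim} and $n\geq 6$, for $\lambda\neq(n)$ the $\fD$-module $\fa^\lambda_{\one}$ has no nonzero $\fD$-invariants (concretely, it is the adjoint module of $\fsl(W_\lambda)$ if $\lambda\in E_n$, and $\Hom(W^\lambda_{\zero},W^\lambda_{\one})\oplus\Hom(W^\lambda_{\one},W^\lambda_{\zero})$ if $\lambda\in F_n$). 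From these two facts it follows that $\fD(\CS_n)_{\one}\oplus\C\cdot T_n$ is multiplicity-free as an $\fD$-module---its trivial part is exactly $\C\cdot T_n$, and each nontrivial simple constituent of $\fD(\CS_n)_{\one}$ occurs in a single $\fa^\lambda_{\one}$. On the other hand, \cref{theorem:Wlambda(gn)} shows that the projection $\gone\to\fa^\lambda_{\one}$ is surjective for each $\lambda$. Together with complete reducibility and multiplicity-one, this forces $\fa^\lambda_{\one}\subseteq\gone$ for every $\lambda\neq(n)$ (and $\fa^{(n)}_{\one}=0$), hence $\fD(\CS_n)_{\one}\subseteq\gone$; since $T_n\in\gone$ as well, in fact $\gone=\fD(\CS_n)_{\one}\oplus\C\cdot T_n$.

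To finish, for $\lambda\neq(n)$ the dimension bounds of \cref{lemma:lower-bound-dim} (with $n\geq 6$) make \cref{lem:generated-by-odd} applicable, so $\fa^\lambda$ is generated as a Lie superalgebra by $\fa^\lambda_{\one}\subseteq\gone$; thus $\fa^\lambda\subseteq\g_n$, and in particular $e^\lambda\in\g_n$. Since $1_{\CS_n}=\tfrac12[\tau,\tau]\in\g_n$ for any transposition $\tau$, also $e^{(n)}=1_{\CS_n}-\sum_{\lambda\neq(n)}e^\lambda\in\g_n$, and therefore $\fa^{(n)}=\sq(W^{(n)})=\C\cdot e^{(n)}\subseteq\g_n$. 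Hence $\fD(\CS_n)=\bigoplus_\lambda\fa^\lambda\subseteq\g_n$, which with $T_n\in\g_n$ and \eqref{eq:gn-subset-derived-plus-Tn} gives $\g_n=\fD(\CS_n)+\C\cdot T_n$. The one genuinely delicate point is the multiplicity-one ``Goursat'' step in the middle paragraph: a priori $\g_n$ might surject onto each simple factor of $\fD(\CS_n)$ yet sit inside it as a proper diagonal subalgebra---the factors $\sq(W^\lambda)$ need not be pairwise non-isomorphic as abstract Lie superalgebras---and excluding this is exactly the purpose of \cref{cor:D(gzero)-structure} and the chain of results leading to it, notably \cref{prop:Dlambda-Dmu-intersection} and \cref{prop:An-Lie-dual}.
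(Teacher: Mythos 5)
Your proposal is correct and takes essentially the same route as the paper's proof: reduce to $\fD(\CS_n)\subseteq\g_n$, capture the odd part block-by-block by viewing $\gone$ as a completely reducible $\fD(\gzero)$-submodule of $\fD(\CS_n)_{\one}\oplus\C\cdot T_n$ and combining \cref{cor:D(gzero)-structure} (cross-block non-isomorphy) with the surjectivity of the projections from \cref{theorem:Wlambda(gn)}, then recover the even parts via \cref{lem:generated-by-odd}, treating $\lambda=(n)$ separately using $1_{\CS_n}=\tfrac{1}{2}[\tau,\tau]$. The extra details you supply (the $\fD$-invariance of $\C\cdot T_n$ and the closing ``Goursat'' remark) are accurate glosses on the same argument.
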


\begin{proof}
The theorem is true for $n \in \set{2,3,4,5}$ by \cref{lemma:gn-equality}, so we may assume that $n \geq 6$. We observed previously in \eqref{eq:gn-subset-derived-plus-Tn} that $\g_n \subseteq \fD(\CS_n) + \C \cdot T_n$ and that $T_n \in \g_n$, so we just need to show that $\fD(\CS_n) \subseteq \g_n$. Henceforward in this proof, we will let $\g = \g_n$, and we will identify $\CS_n$ with its image under the superalgebra isomorphism of \cref{cor:CSn-as-superalgebra}.

First we will show that $\fD(\CS_n)_{\one} \subseteq \g$. By \eqref{eq:D(CSn)}, one has
	\[
	\fD(\CS_n)_{\one} = \Big[ \bigoplus_{\lambda \in E_n} \sq(W^\lambda)_{\one} \Big] \oplus \Big[ \bigoplus_{\lambda \in F_n} \fsl(W^\lambda)_{\one} \Big],
	\]
and by \cref{cor:D(gzero)-structure}, one has, with notation as in \eqref{eq:Wlambda(D)},
	\begin{align*}
	\fD(\gzero) &= \Big[ \bigoplus_{\lambda \in E_n} \fsl(W_\lambda) \Big] \oplus \Big[ \bigoplus_{\lambda \in F_n} \fsl(W^\lambda_{\zero}) \oplus \fsl(W^\lambda_{\one}) \Big].
	\end{align*}
Then $\fD(\CS_n)_{\one}$ is a direct sum of pairwise non-isomorphic simple $\fD(\gzero)$-modules. Since $\gone$ is a $\fD(\gzero)$-submodule of $\fD(\CS_n)_{\one} + \C \cdot T_n$, it must contain some subset of the simple summands in $\fD(\CS_n)_{\one}$. Using \cref{theorem:Wlambda(gn)}, we see that each of these summands is contained in the image of the corresponding projection map $W^\lambda: \g \to \End(W^\lambda)$, and hence must have been contained in $\gone$. Thus $\fD(\CS_n)_{\one} \subseteq \g$.

Now applying \cref{lemma:lower-bound-dim} and \cref{lem:generated-by-odd}, we deduce that $\fsl(W^\lambda) \subseteq \g$ for each $\lambda \in F_n$, and we deduce that $\sq(W^\lambda) \subseteq \g$ for all $\lambda \in E_n$ with the exception of $\lambda = (n)$; recall \cref{conv:(n)-in-En}. For $\lambda = (n)$ the inclusion can be directly verified. For this partition, one has $\sq(W^{(n)}) = \C  \id_{W^{(n)}}$. If $\tau \in S_n$ is any transposition, then $1_{\CS_n} = \frac{1}{2}[\tau,\tau] \in \g$. But under the isomorphism of \cref{cor:CSn-as-superalgebra}, one has $1_{\CS_n} = \sum_{\lambda \in \Pbar(n)} \id_{W^\lambda}$, so
	\[
	\id_{W^{(n)}} = 1_{\CS_n} - \Big( \sum_{\substack{\lambda \in \Pbar(n) \\ \lambda \neq (n)}} \id_{W^\lambda} \Big) \in \g.
	\]
Thus $\sq(W^{(n)}) \subseteq \g$, and hence $\fD(\CS_n) \subseteq \g$.
\end{proof}

\begin{corollary} \label{cor:D(g0)-Lie-subalgebra-CAn}
Let $n \geq 2$, and let $\g = \g_n$. Then the Artin--Wedderburn Theorem isomorphism
	\begin{equation} \label{eq:CAn-Artin-Wedderburn}
	\CA_n \cong \Bigg[ \bigoplus_{\lambda \in E_n} \End(S^\lambda) \Bigg] \oplus \Bigg[ \bigoplus_{\lambda \in F_n} \End(S^{\lambda^+}) \oplus \End(S^{\lambda^-}) \Bigg]
	\end{equation}
restricts to a Lie algebra isomorphism
	\begin{equation} \label{eq:D(g0)-Artin-Wedderburn}
	\fD(\gzero) \cong \fD(\CA_n) = \Bigg[ \bigoplus_{\lambda \in E_n} \fsl(S^\lambda) \Bigg] \oplus \Bigg[ \bigoplus_{\lambda \in F_n} \fsl(S^{\lambda^+}) \oplus \fsl(S^{\lambda^-}) \Bigg].
	\end{equation}
\end{corollary}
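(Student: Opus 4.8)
The plan is to read \cref{cor:D(g0)-Lie-subalgebra-CAn} off \cref{theorem:finaltheorem} together with the matrix models for $\sq$ and $\fsl$. Since both $\fD(\gzero)$ and $\fD(\CA_n)=[\CA_n,\CA_n]$ are intrinsic subspaces of $\CA_n$, and since under any Artin--Wedderburn isomorphism of $\CA_n$ the subspace $\fD(\CA_n)$ is carried to the direct sum of $\fsl$'s displayed in \eqref{eq:D(g0)-Artin-Wedderburn}, it is enough to prove the equality $\fD(\gzero)=\fD(\CA_n)$ inside $\CA_n$. The first step is to note that every transposition in $S_n$ is an odd permutation, so $T_n\in(\CS_n)_{\one}$; taking even parts in the identity $\g_n=\fD(\CS_n)+\C\cdot T_n$ of \cref{theorem:finaltheorem} therefore gives $\gzero=\fD(\CS_n)_{\zero}$, and hence $\fD(\gzero)=[\gzero,\gzero]=[\fD(\CS_n)_{\zero},\fD(\CS_n)_{\zero}]$.

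Next I would unpack $\fD(\CS_n)_{\zero}$ using \eqref{eq:D(CSn)}. After identifying $\CS_n$ with $\bigl[\bigoplus_{\lambda\in E_n}\fq(W^\lambda)\bigr]\oplus\bigl[\bigoplus_{\lambda\in F_n}\gl(W^\lambda)\bigr]$ and passing to even parts, $\fD(\CS_n)_{\zero}$ becomes the block-diagonal (hence internally commuting) direct sum $\bigoplus_{\lambda\in E_n}\sq(W^\lambda)_{\zero}\oplus\bigoplus_{\lambda\in F_n}\fsl(W^\lambda)_{\zero}$, whose summands are supported on the distinct matrix factors of $\CA_n$. Taking derived subalgebras respects this decomposition, so $\fD(\gzero)=\bigoplus_{\lambda\in E_n}[\sq(W^\lambda)_{\zero},\sq(W^\lambda)_{\zero}]\oplus\bigoplus_{\lambda\in F_n}[\fsl(W^\lambda)_{\zero},\fsl(W^\lambda)_{\zero}]$. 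A short matrix computation (equivalently, \cref{cor:Wlambda(D)}) then identifies, for $\lambda\in E_n$, the algebra $\sq(W^\lambda)_{\zero}$ with the diagonal copy of $\gl(W_\lambda)$ in $\gl(W^\lambda_{\zero})\oplus\gl(W^\lambda_{\one})$, whose derived subalgebra is the diagonal $\fsl(W_\lambda)$; and, for $\lambda\in F_n$, the algebra $\fsl(W^\lambda)_{\zero}$ with the block-diagonal, equal-trace subalgebra of $\gl(W^\lambda_{\zero})\oplus\gl(W^\lambda_{\one})$, whose derived subalgebra is $\fsl(W^\lambda_{\zero})\oplus\fsl(W^\lambda_{\one})$.

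Finally I would match these subspaces with the matrix factors of $\CA_n$ appearing in \eqref{eq:CAn-Artin-Wedderburn}. For $\lambda\in E_n$ both $W^\lambda_{\zero}$ and $W^\lambda_{\one}$ are isomorphic to $S^\lambda$ as $\CA_n$-modules (\cref{lemma:super-CSn-as-ordinary}, \cref{cor:Wlambda(D)}), so the factor $\End(S^\lambda)$ corresponds to the diagonal $\gl(W_\lambda)$ and $\fsl(S^\lambda)$ to the diagonal $\fsl(W_\lambda)$; for $\lambda\in F_n$ one has $W^\lambda_{\zero}=S^{\lambda^+}$ and $W^\lambda_{\one}=S^{\lambda^-}$, so $\End(S^{\lambda^+})\oplus\End(S^{\lambda^-})$ corresponds to $\gl(W^\lambda_{\zero})\oplus\gl(W^\lambda_{\one})$ and $\fsl(S^{\lambda^+})\oplus\fsl(S^{\lambda^-})$ to $\fsl(W^\lambda_{\zero})\oplus\fsl(W^\lambda_{\one})$. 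Assembling the pieces, $\fD(\gzero)$ equals $\bigl[\bigoplus_{\lambda\in E_n}\fsl(S^\lambda)\bigr]\oplus\bigl[\bigoplus_{\lambda\in F_n}\fsl(S^{\lambda^+})\oplus\fsl(S^{\lambda^-})\bigr]$ inside $\CA_n$; since $[\End(V),\End(V)]=\fsl(V)$ for every finite-dimensional $V$, this is precisely $\fD(\CA_n)$, which yields \eqref{eq:D(g0)-Artin-Wedderburn}. I do not anticipate a genuine obstacle: the argument is routine once \cref{theorem:finaltheorem} is in hand, and the only points requiring care are the bookkeeping that distinct $\lambda$ occupy distinct matrix factors (immediate from \cref{cor:CSn-as-superalgebra} being a decomposition into two-sided ideals) and the degenerate cases in which some $S^\lambda$, $S^{\lambda^+}$, $S^{\lambda^-}$, or $W_\lambda$ is one-dimensional, where the corresponding $\fsl$ is simply $0$ and all of the above identifications still hold.
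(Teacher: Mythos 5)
Your proposal is correct and is essentially the paper's argument: both rest on \cref{theorem:finaltheorem}, the description \eqref{eq:D(CSn)}, and the even-part matrix identifications recorded in \cref{cor:Wlambda(D)}. The only cosmetic difference is that you get the equality directly by noting $\gzero = \fD(\CS_n)_{\zero}$ (since $T_n$ is odd) and then taking derived subalgebras block-by-block, whereas the paper phrases the same content as an inclusion into the right-hand side of \eqref{eq:D(g0)-Artin-Wedderburn} followed by a dimension comparison.
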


\begin{proof}
Under the Artin--Wedderburn isomorphism, the Lie algebra $\fD(\gzero) \subseteq \gzero \subseteq \CA_n = (\CS_n)_{\zero}$ maps into the product of Lie algebras on the right-hand side of \eqref{eq:D(g0)-Artin-Wedderburn}, and one can then see that this map is a surjection by dimension comparison using \eqref{eq:D(CSn)}, \cref{theorem:finaltheorem}, and \cref{cor:Wlambda(D)}.
\end{proof}

\section{Declarations}\label{S:declarations}

\subsection{Ethical Approval}

Not Applicable.

\subsection{Funding}

CMD was supported in part by Simons Collaboration Grant for Mathematicians No.\ 426905.  JRK was supported in part by Simons Collaboration Grant for Mathematicians No.\ 525043.

\subsection{Availability of data and materials}

Not Applicable.

\makeatletter
\renewcommand*{\@biblabel}[1]{\hfill#1.}
\makeatother

\bibliographystyle{eprintamsplain}
\bibliography{lie-superalgebra-of-transpositions}

\end{document}